\let\expandafter\oldproof\csname\string\proof\endcsname
\let\oldendproof\endproof
\renewenvironment{proof}[1][\proofname]{%
  \oldproof[\ttfamily \scshape \bf #1. ]%
}{\oldendproof}
\def\O{{\cal O}}
\def\S{{\bf {S}}}
\def\B{\mathbb{B}}
\def\R{{\bf R}}
\def\N{{\rm I\!N}}
\def\ox{\bar{x}}
\def\oy{\bar{y}}
\def\ov{\bar{v}}
\def\ss{\scriptsize }
\def\diag{\mbox{\rm diag}\,}
\def\Hat{\widehat}
\def\ve{\varepsilon}
\def\X{{\bf X}}
\def\Y{{\bf Y}}
\def\Z{{\bf Z}}
\def\tilde{\widetilde}
\def\emp{\emptyset}
\def\span{{\rm span}\,}
\def\Lm{{\Lambda}}
\def\tto{\rightrightarrows}
\def\d{{\rm d}}
\def\sub{\partial}
\def\Hat{\widehat}
\def\Tilde{\widetilde}
\def\Bar{\overline}
 \def\para{{\rm par}\,}
\def\ra{\rangle}
\def\la{\langle}
\def\ve{\varepsilon}
\def\conv{\mbox{\rm conv}\,}
\def\cone{\mbox{\rm cone}\,}
\def\span{\mbox{\rm span}\,}
\def\quadr{ {\rm quad}\,}
\def\ri{\mbox{\rm ri}\,}
\def\gph{\mbox{\rm gph}\,}
\def\epi{\mbox{\rm epi}\,}
\def\dim{\mbox{\rm dim}\,}
\def\dom{\mbox{\rm dom}\,}
\def\ker{\mbox{\rm ker}\,}
\def\diag{\mbox{\rm diag}\,}
\def\dn{\downarrow}
\def\O{\Omega}
\def\ph{\varphi}
\def\emp{\emptyset}
\def\oR{\Bar{\R}}
\def\lm{\lambda}
\def\gg{\gamma}
\def\dd{\delta}
\def\al{\alpha}
\def\th{\theta}
\def\sm{\hbox{${1\over 2}$}}
\def \b{{\}_{k\in\N}}}
\def\sce{\setcounter{equation}{0}}
\DeclareMathOperator*{\limi}{e-liminf\;}
\begin{document}
\newtheorem{Theorem}{Theorem}[section]
\newtheorem{Proposition}[Theorem]{Proposition}
\newtheorem{Remark}[Theorem]{Remark}
\newtheorem{example}[Theorem]{Example}
\newtheorem{Lemma}[Theorem]{Lemma}
\newtheorem{Corollary}[Theorem]{Corollary}
\newtheorem{Definition}[Theorem]{Definition}
\newtheorem{Example}[Theorem]{Example}
\newtheorem{Algorithm}[Theorem]{Algorithm}
\numberwithin{equation}{section}
\renewcommand{\thefootnote}{\fnsymbol{footnote}}
%%%%%%%%%%%%%%%%%%%%%%%%%%%%%%%%%%%%%%%%%%%%%%%%%%%%%
\begin{center}
{\bf \Large Characterizations of Tilt-Stable Local Minimizers of a Class of Matrix Optimization Problems}
\\[1 ex]
CHAO DING\footnote{State Key Laboratory of Mathematical Sciences, Academy of Mathematics and Systems Science, Chinese Academy of Sciences, Beijing 100190, China; School of Mathematical Sciences, University of Chinese Academy of Sciences, Beijing 100049, China; Institute of Applied Mathematics, Academy of Mathematics and Systems Science, Chinese Academy of Sciences, Beijing 100190, China (dingchao@amss.ac.cn). Research of this author is supported in part by the National Key R\&D Program of China (No.~2021YFA1000300, No.~2021YFA1000301), National Natural Science Foundation of China (No.~12531014), and CAS Project for Young Scientists in Basic Research (No.~YSBR-034). }\quad
EBRAHIM SARABI\footnote{Department of Mathematics, Miami University, Oxford, OH 45065, USA (sarabim@miamioh.edu). Research of this    author is partially supported by the U.S. National Science Foundation  under the grant DMS 2108546.}\quad
SHIWEI WANG\footnote{Institute of Operational Research and Analytics, National University of Singapore, Singapore (wangshiwei@amss.ac.cn).}
\end{center}
\vspace*{0.05in}
\small{\bf Abstract.} 
Tilt stability plays a pivotal role in understanding how local solutions of an optimization problem respond to small, targeted perturbations of the objective. Although quadratic bundles are powerful tools for capturing second-order variational behavior, their characterization remains incomplete beyond well-known polyhedral and certain specialized nonpolyhedral settings. To help bridge this gap, we propose a new point-based criterion for tilt stability in prox-regular, subdifferentially continuous functions by exploiting the notion of minimal quadratic bundles. Furthermore, we derive an explicit formula for the minimal quadratic bundle associated with a broad class of general spectral functions, thus providing a practical and unifying framework that significantly extends existing results and offers broader applicability in matrix optimization problems. 
 \\[1ex]
{\bf Key words.} tilt stability, quadratic bundle, polyhedral spectral function.\\[1ex] 
{\bf  Mathematics Subject Classification (2010)} 90C31, 49J52, 49J53

\normalsize
%%%%%%%%%%%%%%%%%%%%%%%%%%%%%%%%%%%%%%%%%%%%
\section{Introduction}\sce \label{intro}

The problems of the form
\begin{equation*}
\mbox{minimize}\;\;  f(x)\quad \mbox{subject to}\;\; x\in \X,
\end{equation*}
where $\X$ is an Euclidean space and $f:\X\to\oR :=[-\infty,+\infty]$ is a possibly extended-real-valued function, frequently appear in optimization and operations research. The stability of local solutions under perturbations lies at the heart of optimization theory, shaping both algorithmic robustness \cite{CC,ZKZhu,br,LPT} and our understanding of variational landscapes \cite{pr98,LPR2000}. Among stability concepts, one natural route for studying local stability properties of a local minimizer $\bar{x}$ involves examining the behavior of the solution to the ``tilted" problem
\begin{equation}\label{eq:tilted-prob}
\mbox{minimize}\;\; \quad f(x)-\langle x,v\rangle\quad \mbox{subject to}\;\; x\in \X,
\end{equation}
in which one adds a linear term $-\langle x, v \rangle$ to $f(x)$. Such linear (or ``tilt") perturbations form a fundamental class since they capture first-order approximations of more general perturbations and therefore provide a unifying lens for sensitivity considerations. 

A local minimizer $\bar{x}$ is said to be \emph{tilt-stable} if, for all sufficiently small $v$, there is a unique local solution $x(v)$ to the tilted problem \eqref{eq:tilted-prob} that depends in a Lipschitz way on $v$ (see \eqref{til} in Section \ref{sec:Preliminaries} for the definition). This notion of \emph{tilt stability}, introduced and formalized by Poliquin and Rockafellar \cite{pr98}, highlights local minima at which small perturbations to the objective function do not harm uniqueness or cause large deviations in the solution. Such a stability property is a cornerstone for sensitivity analysis, numerical methods, and applications ranging from machine learning to control systems\cite{Higham,MSarabi21,Rowan}.

In the classical smooth setting, for a twice continuously differentiable function $f$, one has the well-known characterization: if $\nabla f(x^*)=0$, then $x^*$ is tilt-stable if and only if the Hessian $\nabla^2f(x^*)$ is positive definite \cite[Proposition~1.2]{pr98}. When the function is not everywhere differentiable but remains prox-regular and subdifferentially continuous, the tilt stability of a local minimum is correctly captured by the positive definiteness of a generalized Hessian object, sometimes described by coderivatives of the limiting subdifferential \cite{pr98}. 

Beyond such optimality-based explanations, tilt-stable solutions are also interesting from a computational perspective because they fix the unrobustness of strong local minimizers \cite[Definition 1.1]{DLewis} to small perturbations to the objective function. In 2013, for subdifferentially continuous, lower semicontinuous $f$, \cite{DLewis} established its equivalence to the uniform quadratic growth, strongly metrically regular of the subdifferential mapping $\partial f$. In the same year, \cite{LZhang13} established its equivalence to the strong criticality of the local minimizer, which is a locally quadratic minimizer, and the subdifferential contains zero in its relative interior, for prox-regular and ${\cal C}^2$-partly smooth function relative to the ${\cal C}^2$-smooth manifold. In \cite{mn14}, its equivalence to the uniform second-order growth condition is further extended to the prox-regular and subdifferentially continuous function. In 2018, \cite{chn} explored this problem via the positive definiteness of the graphical derivative of its limiting subdifferential. 
If we further suppose the validity of constraint nondegeneracy, \cite{MNR15} proved the equivalence between tilt stability and the well-known strong second order sufficient condition (SSOSC) for nonlinear semidefinite programming (NLSDP). 
Recently, in \cite{rockvtn}, for the general optimization problem, the variationally strong convexity implies the tilt stability \cite[Definition 3]{rockvtn} of the local minimizer. However, it is worth noting that tilt stability usually does not imply variationally strong convexity, as explained in \cite[Remark 2.8]{kmp} and \cite{rockvtn}. 
In the last year, the study of tilt stability has raised more attention, specially for (locally) convex problems \cite{Nghia}.

The main objective of this paper is to demonstrate that the second subderivative and its epigraphical limits, called the quadratic bundles, suffice to characterize tilt-stable local minimizers of a function. 
The main advantage of the second subderivative over the other second-order constructions, including the graphical derivative and coderivative in \cite{chn, mn14,pr98} which were used extensively for the characterization of tilt-stable minimizers, is that it can be calculated for major classes of functions, important for their applications into constrained and composite optimization problems,  such as spectral functions for which other second-order constructions seem difficult to calculate and are not available yet.   

We also aim to calculate the minimal quadratic bundle for an important class of spectral functions. Quadratic bundles, introduced recently in \cite{r23},  offer a powerful approach for capturing second-order variational behavior in optimization. However, beyond the well-understood settings of polyhedral sets and certain specialized nonpolyhedral sets (such as second-order cones \cite{r23,WDZZhao} and the semidefinite cones \cite{WDZZhao}), a complete mathematical description of minimal quadratic bundles is not yet available.  We also  investigate how to employ the quadratic bundle to obtain the explicit characterizations of tilt-stable local minimizers for structured matrix optimization problems, including those featuring spectral functions. Spectral-related matrix optimization problems arise in a variety of applications such as the maximum Markov chain mixing rates \cite{BDParrilo, BDSXiao,BDXiao}, matrix approximations under doubly stochastic constraints \cite{ZS}, unsupervised learning \cite{Ghahramani}, or semidefinite programming \cite{Sun06,TT}. Such problems also include various low-rank regularization models (e.g., robust PCA \cite{CLM}, matrix completion \cite{CP}) and indeed often feature non-polyhedral feasible sets or objectives. Our main contributions can be summarized as follows:

\begin{itemize}
\item We develop a new neighborhood  criterion (Theorem~\ref{ssti}) for tilt stability in the setting of prox-regular, subdifferentially continuous functions using the notion of the second subderivative.
Pervious attempt in \cite[Corollary~2.5]{Nghia} gave a neighborhood characterization 
of tilt-stable minimizers of convex functions without determining the relationship between the modulus of tilt stability. Our result gives an extension of this observation for prox-regular functions and determine the modulus of tilt stability as well.

\item Using our neighborhood characterization, we obtain a pointwise characterization of tilt-stable local minimizers of prox-regular, subdifferentially continuous functions via the concept of the quadratic bundles. Similar results were established recently in \cite[Theorem~5.2]{kmpv} using a different approach. Our results go one step further than \cite{kmpv} and provide a characterization of tilt-stable minimizers via strict positiveness of quadratic bundles (Theorem~\ref{pos-tilt}), which is important for applications to constrained and composite optimization problems.  When a minimal quadratic bundle exists, we show that the tilt-stability is equivalent to positiveness of the minimal quadratic bundle.

\item We demonstrate that an important class of spectral functions enjoys the minimal quadratic bundle. In this case, we
provide an explicit formula for the minimal quadratic bundle, thus substantially extending the known formulas for polyhedral settings \cite{r23} and the particular semidefinite cone \cite{WDZZhao}. The result yields a practical tool for verifying tilt stability in a broader scope of matrix optimization problems. 
\end{itemize}

The paper is organized as follows. Section~\ref{sec:Preliminaries} outlines the essential notation and some known facts about variational analysis and tilt stability. Section~\ref{tilt} explores quadratic bundles and second subderivatives, leading to a fresh characterization of tilt stability. Section~\ref{sec:qbdofspfunc} then focuses on the explicit form of minimal quadratic bundles for polyhedral spectral functions and illustrates how these formulas assist in verifying tilt stability in complex matrix optimizing scenarios. Finally, Section~\ref{tilt-matrix} consolidates these results to yield new insights into tilt-stable local minimizers in a class of matrix optimization settings. Concluding remarks and directions for further study appear in the last section.

\section{Preliminaries}\label{sec:Preliminaries}
In this section, we introduce some notations and preliminary results that are frequently used throughout this paper. 
Suppose that $\X$, $\Y$, and $\Z$ are given Euclidean spaces. In the  product space $\X\times \Y$, its norm is defined as $\|(w,u)\|=\sqrt{\|w\|^2+\|u\|^2}$ for any $(w,u)\in \X\times \Y$. 
Denote $\B$ as  the closed unit ball and $\B_r(x):=x+r\B$ as the closed ball centered at $x$ with radius $r>0$. Given a nonempty set $C\subset\X$, we apply  $\ri C$, $C^*$,  $\conv C$, $\cone C$,   and $\mbox{aff}\, C$ to represent its relative interior, polar cone,  the convex hull,  the conic hull, and the affine hull of $C$, respectively. 
Let $\{C^t\}_{t>0}$ be a parameterized family of sets in $\X$. Its outer limit set (cf. \cite[Definition~4.1]{rw}) is defined as
\begin{eqnarray*}
\limsup_{t\searrow 0} C^t&=& \big\{x\in \X|\; \exists \, t_k \searrow 0 \;\exists\,   \; x^{t_k}\to x \;\;\mbox{with}\;\; x^{t_k}\in C^{t_k}\big\};
\end{eqnarray*}
{the notation $t\searrow 0$ means $t\rightarrow0$ and each $t>0$}. 
$\{C^t\}_{t>0}$ is said to converge to $C$ if the outer limit set coincides with its inner limit set (cf. \cite[Definition~4.1]{rw}) with both of them equal to $C$. We denote it as $C^t \to C$ when $t\searrow 0$. 
A sequence $\{f^k\b$ of functions $f^k:\X\to \oR $ is said to {\em epi-converge} to a function $f:\X\to \oR$ if we have $\epi f^k\to \epi f$ as $k\to \infty$, where $\epi f=\{(x,\al)\in \X\times \R|\, f(x)\le \al\}$ is  the epigraph of $f$;
 see \cite[Definition~7.1]{rw} for more details on  epi-convergence. We denote by $f^k\xrightarrow{e} f$ the  epi-convergence of  $\{f^k\b$ to $f$. 
Fix any $\ox\in C$, the tangent  cone to $C$ at $\ox$ is defined   by
\begin{equation*} 
T_C(\ox) = \limsup_{t\searrow 0} \frac{C - \ox}{t}.
\end{equation*}

Consider a set-valued mapping $F:\X\tto\Y$, its domain  and graph are defined, respectively, by  
$ \dom F:= \{x\in\X\big|\;F(x)\ne\emp \}$ and $\gph F=\{(x,y)\in \X\times \Y|\, y\in F(x)\}$. The {\em graphical derivative} \cite[Definition~8.33]{rw} of $F$ at $\ox$ for $\oy$ with $(\ox,\oy) \in \gph F$ is the set-valued mapping $DF(\ox, \oy): \X\tto \Y$ defined by 
$\gph DF(\ox, \oy) = T_{\ss \gph F}(\ox, \oy)$.  
When the `$\limsup$' in the definition of $T_{\ss \gph F}(\ox, \oy)$ becomes a full limit, 
we say that $F$ is proto-differentiable at $\ox$ for $\oy$. 
Given $\Omega\subset \X$ and $\ox\in \Omega$,  its  regular normal cone $\Hat N_{\Omega}(\ox)$  at $\ox$ is defined by
 $\Hat N_{\Omega}(\ox) = T_\Omega(\ox)^*$. For $\ox\notin \Omega$, we set $ \Hat N_\Omega(\ox) = \varnothing$. The (limiting/Mordukhovich) normal cone $N_\Omega(\ox)$ to $\Omega$ at $\ox$ is given by 
${N}_\Omega (\ox):= \limsup\limits_{x\rightarrow\ox}{N}_\Omega (x)$. 
When $\Omega$ is convex, both normal cones coincide with the normal cone in the sense of  convex analysis.  
Given a function $f:\X \to \oR$ and a point $\ox\in\X$ with $f(\ox)$ finite. Denote the subdifferential of $f$ at $\ox$ as $\partial f(\ox)=\{v\mid (v,-1)\in N_{\ss \epi f}(\ox,f(\ox))\}$.  
A function  $f\colon\X\to\oR$ is called prox-regular at $\ox$ for $\ov$ if $f$ is finite at $\ox$ and locally lower semicontinuous (lsc)   around $\ox$ with $\ov\in\sub f(\ox)$, and there exist 
constants $\ve>0$ and $r\ge 0$ such that
\begin{equation}\label{prox}
\begin{cases}
f(x')\ge f(x)+\la v,x'-x\ra-\frac{r}{2}\|x'-x\|^2\;\mbox{ for all }\; x'\in\B_{\ve}(\ox)\\
\mbox{whenever }\;(x,v)\in(\gph\sub f)\cap\B_{\ve}(\ox,\ov)\; \mbox{ with }\; f(x)<f(\ox) +\ve.
\end{cases}
\end{equation}
The function $f$ is called subdifferentially continuous at $\ox$ for $\ov$ if the convergence $(x^k,v^k)\to(\ox,\ov)$ with $v^k\in\sub f(x^k)$ yields $f(x^k)\to f(\ox)$ as $k\to\infty$. 
It is well known that a wide range of functions satisfy the above two properties, e.g., convex functions \cite[Example~13.30]{rw} and fully amenable functions in the sense of \cite[Definition~10.23]{rw}.

\begin{Remark}\label{proxr} 
{\rm Note that if the function $f$ is prox-regular and subdifferentially continuous at $\ox$ for $\ov$ with constants $\ve$ and $r$
satisfying \eqref{prox}, then $f$ enjoys prox-regularity for any 
$(x,v)\in \gph \sub f$ sufficently close to $(\ox,\ov)$ with the same constant $r$ but possibly with a smaller radius $\ve$. To see this, take $\ve$ from 
\eqref{prox} and choose  $\dd\in (0,\ve/2)$ so that $(x,v)\in (\gph \sub f)\cap B_{\dd}(\ox,\ov)$ the estimate $|f(x)-f(\ox)|<\ve/2$ holds, which results from 
the assumed subdifferential continuity of $f$. One can easily 
 gleaned from \eqref{prox} that for any $(x,v)\in (\gph \sub f)\cap B_{\dd}(\ox,\ov)$, $f$ enjoys \eqref{prox} with $\ve:= \dd/2$ and the same constant $r$. Moreover, it follows from \cite[Proposition~2.3]{pr96} that for any $(x,v)\in (\gph \sub f)\cap B_{\dd}(\ox,\ov)$, we get $f(x')\to f(x)$ as 
 $(x',v')\to (x,v)$ and $v'\in \sub f(x')$, meaning that $f$
 is subdifferentially continuous at $x$ for $v$. In summary, we showed that 
 prox-regularity and subdifferential continuity holds at $x$ for $v$
 for any $(x,v)\in \gph \sub f$ close to $(\ox,\ov)$.
}
\end{Remark}

Let $f:\X\rightarrow\overline{\R}$ and $\ox\in \X$,  and let $f(\bar{x})$ be finite. The function $f$ is said to be twice epi-differentiable at $\bar{x}$ for $\ov$ if the functions
	$$\Delta^2_{t}f(\bar{x}, \ov)(u)=\frac{f(\bar{x}+tu)-f(\bar{x})-t\langle \ov,u\rangle}{\frac{1}{2}t^2}, \quad u\in \X$$
	epi-converge to $\d^2f(\bar{x}, v)$ as $t\searrow0$, where $\d^2f(\bar{x}, \ov)$ is the second subderivative of $f$ at $\bar{x}$ for $\ov$ defined by
	\begin{equation}\label{eq:defofssubd}\d^2f(\bar{x}, \ov)(w)=\liminf_{t\searrow0,u\rightarrow w}\Delta^2_{t}f(\bar{x}, \ov)(u).\end{equation}

\begin{Remark}\label{monotone} 
{\rm Suppose that the function $f$ is prox-regular  and subdifferentially continuous at $\ox$ for $\ov$ with constants $\ve$ and $r$
satisfying \eqref{prox}. Take any such a pair $(x,v)\in \gph \sub f$  sufficiently close to $(\ox,\ov)$ so that $f$ is prox-regular and subdifferentially continuous at $x$ for $v$, which can be done according to Remark~\ref{proxr}, and assume further that $\sub f$ is proto-differentiable at $x$ for $v$. Then, it follows from \cite[Proposition~4.8]{pr96} and Remark~\ref{proxr} that $\sub f+ rI$
is monotone, with $I$ standing for identity mapping on $\X$,
which together with \cite[Theorem~12.64]{rw} tells us that $D(\sub f)(x,v)+rI$
is a monotone mapping. Appealing now to \cite[Theorem~13.40]{rw} leads us to 
\begin{equation}\label{newq}
D(\sub f)(x,v)+rI =  \sub\big(\frac{1}{2} \d^2 f(x, v)+ \frac{r}{2}\|\cdot\|^2\big),
\end{equation}
which implies that  the mapping $w\mapsto \frac{1}{2} \d^2 f(x, v)(w)+ \frac{r}{2}\|w\|^2$ is convex; see \cite[Theorem~12.17]{rw}. We want to highlight 
that   
the constant $r$ is the same for any such a point $(x,v)$, which plays an important role in the proof of Proposition~\ref{quadn}.  
}
\end{Remark}

A point $\ox$ is said to be a tilt-stable local minimizer of the function $f\colon \X\rightarrow\oR$ if $  f(\ox)$ is finite  and there is $\gg>0$ such that the mapping
\begin{equation}\label{til}
M_{\gg}\colon v\mapsto {\mbox{argmin}}\{f(x)-\la v,x\ra\;|\; x\in \B_{\gg}(\ox)\}
\end{equation}
is single-valued and Lipschitz continuous on a neighborhood of $\ov=0$ with $M_{\gg}(\ov)=\{\ox\}$. Moreover, we say that $\ox$ is a tilt-stable minimizer of $f$ 
with constant $\kappa>0$ if the mapping $M_\gg$ is Lipschitz continuous  with constant $\kappa$ on a neighborhood of $\ov=0$ with $M_{\gg}(\ov)=\{\ox\}$.

Recall also that a set-valued mapping $F$ admits a {\em single-valued graphical localization} around $(\ox,\oy)\in\gph F$ if there
exist some neighborhoods $U$ of $\ox$ and $V$ of $\oy$ together with a single-valued mapping $f\colon U\to V$ such that $\gph F\cap(U\times V)=\gph f$.
The following characterization of tilt stability was established in \cite[Theorem~3.2]{mn14}.
\begin{Proposition}[tilt stability via the second-order growth condition]\label{usogc2}
Let $f\,\colon\X\to\oR$ be prox-regular and subdifferentially continuous at
$\ox$ for $\ov=0$. Then the following conditions  are equivalent.
\begin{itemize}[noitemsep]
\item [\rm{(a)}] The point $\ox$ is a tilt-stable minimizer of $f$ with constant $\kappa>0$.
\item [\rm{(b)}] There are neighborhoods $U$ of $\ox$ and $V$ of $\ov$ such that mapping $(\sub f)^{-1}$ admits a single-valued localization 
$\vartheta\colon V\to U$ around $(\ov,\ox)$ and that for any pair $(v,u)\in \gph \vartheta=\big( \gph \sub f\big) \cap (V\times U)$ we have the{ uniform second-order growth condition}
\begin{equation}\label{usogc}
f(x)\geq f(u)+\la v,x-u \ra +\frac{1}{2\kappa}\|x-u\|^2\quad\quad\mbox{for all}\;\; x\in  U.
\end{equation}
\end{itemize}
\end{Proposition}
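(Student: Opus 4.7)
The argument splits naturally into the two implications, each exploiting the monotone--variational structure implicit in prox-regularity and subdifferential continuity.

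For the implication $(b) \Rightarrow (a)$, the plan is first to derive Lipschitz continuity of the localization $\vartheta$ with constant $\kappa$ on $V$. Writing \eqref{usogc} twice for two pairs $(v_i,u_i) \in \gph\vartheta$ (with $x = u_{3-i}$) and adding gives the ``cocoercivity'' inequality
\[
\tfrac{1}{\kappa}\|u_1 - u_2\|^2 \;\le\; \langle v_1 - v_2,\, u_1 - u_2\rangle \;\le\; \|v_1 - v_2\|\,\|u_1 - u_2\|,
\]
which yields $\|u_1 - u_2\| \le \kappa\|v_1 - v_2\|$. Next I would shrink the neighborhoods so that $U \subset \inte \B_\gamma(\ox)$ for some $\gamma > 0$; for $v$ near $\ov = 0$, the tilted problem $\min\{f(x) - \langle v,x\rangle \mid x \in \B_\gamma(\ox)\}$ has a minimizer by lower-semicontinuity (from prox-regularity) and compactness, and \eqref{usogc} at $(v,\vartheta(v))$ makes $\vartheta(v)$ a strict local minimizer. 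A continuity argument, combined with strict optimality at $(\ov,\ox)$ and the prox-regularity bound \eqref{prox}, rules out any boundary minimizer when $v$ is small enough, so $M_\gamma(v) = \vartheta(v)$ and tilt-stability follows with the same constant $\kappa$.

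For the reverse direction $(a) \Rightarrow (b)$, I introduce the marginal function
\[
\chi(v) \;=\; \inf_{x \in \B_\gamma(\ox)}\bigl\{f(x) - \langle v,x\rangle\bigr\},
\]
which is concave as an infimum of affine functions of $v$. Tilt stability ensures that the infimum is attained uniquely at $\vartheta(v) = M_\gamma(v) \in \inte \B_\gamma(\ox)$, and the envelope theorem yields $\nabla(-\chi)(v) = \vartheta(v)$ on a neighborhood $V$ of $\ov$, with $\vartheta$ being $\kappa$-Lipschitz. The convex function $-\chi$ therefore has $\kappa$-Lipschitz gradient on $V$, so by the Baillon--Haddad theorem $\vartheta$ is $\tfrac{1}{\kappa}$-cocoercive on $V$. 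Identifying the Fenchel conjugate $(-\chi)^* = g^{**}$ with $g = f + \delta_{\B_\gamma(\ox)}$, one verifies that the biconjugate agrees with $f$ along $\gph\vartheta$: for $u = \vartheta(v)$ the sup defining $g^{**}(u)$ is attained at $v$, so $g^{**}(u) = \langle u,v\rangle + \chi(v) = f(u)$. Cocoercivity transfers by conjugacy to $\tfrac{1}{\kappa}$-strong convexity of $g^{**}$ around $u$, whence
\[
f(x) \;\ge\; g^{**}(x) \;\ge\; g^{**}(u) + \langle v,\, x - u\rangle + \tfrac{1}{2\kappa}\|x - u\|^2 \;=\; f(u) + \langle v,\, x - u\rangle + \tfrac{1}{2\kappa}\|x - u\|^2
\]
for $x \in U$, which is \eqref{usogc}. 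Since $\vartheta(v) \in \inte \B_\gamma(\ox)$, the normal-cone contribution vanishes and $v \in \sub f(\vartheta(v))$, so $\vartheta$ is the sought single-valued localization of $(\sub f)^{-1}$.

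The main obstacle I anticipate is supplying a genuinely \emph{local} form of the Baillon--Haddad duality above, since the standard statement is global whereas $\vartheta$ is Lipschitz only on $V$. The cleanest way out is to take $V$ to be a convex ball, exploit the already global convexity of $-\chi$, and verify the integrated descent inequality on $V$; conjugation then delivers the needed strong-convexity inequality at points $u \in U$. The parallel bookkeeping issue in the $(b) \Rightarrow (a)$ direction is the careful choice of $\gamma$ that prevents tilted minimizers from escaping to $\bd \B_\gamma(\ox)$, which uses strict minimality at $(\ov,\ox)$ together with the prox-regularity modulus from Remark~\ref{proxr}. Once those neighborhood issues are controlled, the sharp constant $\kappa$ matches on both sides, yielding the full equivalence.
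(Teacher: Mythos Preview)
The paper does not supply its own proof of this proposition: it is quoted verbatim from \cite[Theorem~3.2]{mn14}, with the sentence ``The following characterization of tilt stability was established in \cite[Theorem~3.2]{mn14}'' immediately preceding the statement. So there is nothing in the paper to compare your argument against line by line.

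That said, your outline is in the spirit of the proofs given in \cite{DLewis} and \cite{mn14}. The $(b)\Rightarrow(a)$ direction is clean: the add-and-subtract step gives the cocoercivity inequality, hence $\kappa$-Lipschitzness of $\vartheta$, and the remaining work of trapping the tilted minimizer in the interior of $\B_\gamma(\ox)$ is routine once \eqref{usogc} is available at $(\ov,\ox)$. For $(a)\Rightarrow(b)$ your conjugate-function route via $\chi(v)=\inf_{x\in\B_\gamma(\ox)}\{f(x)-\langle v,x\rangle\}$ is exactly the one in \cite{DLewis}; the identification $(-\chi)=g^*$ and $g^{**}(\vartheta(v))=f(\vartheta(v))$ are correct, and the inequality $f\ge g^{**}$ on $\B_\gamma(\ox)$ is immediate.

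The one genuine gap you correctly flag is the local Baillon--Haddad step. The standard proof of cocoercivity from $L$-Lipschitzness of the gradient applies the descent lemma at the displaced point $y-\tfrac{1}{L}\nabla\varphi_x(y)$, which need not remain in $V$. The fix used in \cite{mn14} is not to localize Baillon--Haddad at all, but to observe that $-\chi$ is convex and finite on the \emph{whole} space $\X$ (as an infimum of affine functions), so one can work with $g^{**}$ globally; the Lipschitz constant of $\nabla(-\chi)$ is only known on $V$, but that is enough to obtain the required strong-convexity inequality of $g^{**}$ at pairs $u=\vartheta(v)$ with $v\in V$, since the supremum defining $g^{**}(x)$ for $x$ near $u$ is attained in $V$. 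Making that attainment argument precise is the missing technical ingredient in your sketch.
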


\section{Characterizations of Tilt-Stable Minimizers}\label{tilt}

This section aims to provide characterizations of tilt-stable local minimizers for a prox-regular function that mostly revolves around the concept of the second subderivative. We begin by recalling the concept of 
a generalized quadratic form \cite{r23} that will be used extensively in this paper. 
\begin{Definition}\label{def:GQF} Suppose that  $\ph:\X\to \oR$ is a proper function. 
\begin{itemize}[noitemsep,topsep=2pt]
\item [ \rm {(a)}] We say that the subgradient mapping $\sub \ph:\X\tto \X$ is {  generalized linear} if its graph is a linear subspace of $\X\times \X$.
\item [ \rm {(b)}]  We say that $\ph$ is a generalized quadratic form (GQF) on $\X$ if $\dom \ph$ is a linear subspace of $\X$ and there exists a linear symmetric operator $L$ {\rm(}i.e. $\la Lx,y\ra=\la x,Ly\ra$
for any $x,y\in \dom \ph${\rm)} from $\dom \ph$ to $\X$ such that $f$ has a representation of form 
$$
\ph(x)=\la Lx,x\ra \quad \mbox{for all}\;\; x\in \dom \ph.
$$ 
\end{itemize}
 \end{Definition}
 Given a proper lsc convex function $\ph:\X\to \oR$ with $\ph(0)=0$, suppose that the subgradient mapping $\sub \ph$ is generalized linear.
 Thus,  it is possible to demonstrate that there is a linear symmetric and positive definite 
 operator $T:S\to S$ such that 
\begin{equation*}\label{rep}
 \sub \ph(x)=T(x)+S^\perp\quad \mbox{for all}\;\; x\in S,
 \end{equation*}
 where $S=\dom \sub \ph$ and that $S$ is a linear subspace of $X$ and $S^\perp =\sub \ph(0)$; see \cite[page~6]{hs23} for a detailed discussion on this claim. For any nonempty set $C$ in $\X$, denote {$\Pi_C$} as the projection mapping and $\dd_C$ as its indicator function.  
Define $M:\X\to \X$ by  $M=T\circ  {\Pi}_S$. It is not hard to see that $M(x)=T(x)$ for any $x\in S$  and that 
 $M$ is a linear symmetric and positive definite 
 operator on $\X$ and 
\begin{equation}\label{gqf}
 \ph(x)=  \frac{1}{2}\la M(x),x\ra+\dd_S(x) \quad \mbox{for all}\;\; x\in \X;
\end{equation}
see again  \cite[page~6]{hs23} for more details. 
It is worth adding that the converse of the above result holds as well. Indeed, it follows from \cite[Proposition~4.1]{r85} that 
 for a proper lsc convex function $\ph$ with $\ph(0)=0$,  $\ph$ is a GQF on $\X$ if and only if $  \sub \ph$ is generalized linear. Below, we record a similar result for the second subderivative of a prox-regular function.
 
 \begin{Proposition}\label{quadform}Assume that $f:\X\to \oR$ is prox-regular and subdifferentially continuous at $\ox$ for  $\ov\in   \sub f(\ox)$ and 
 that $f$ is twice epi-differentiable at $\ox$ for  $\ov$. Then 
 $D(\sub f)(\ox,\ov)$
 is generalized linear if and only if  $\d^2 f(\ox,\ov)$ is a GQF on $\X$, meaning that $S:=\dom \d^2 f(\ox,\ov)$ is a linear subspace of $\X$
 and there is a linear symmetric operator $T:S\to S$ such that 
 $$
  \d^2 f(\ox,\ov)(w)=\la (T\circ  {\Pi}_S)(w),w\ra+\dd_S(w),\quad \mbox{for all}\;\; w\in X.
 $$
 \end{Proposition}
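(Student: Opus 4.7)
The plan is to reduce the stated equivalence to the known convex-analytic fact from \cite[Proposition~4.1]{r85} (recalled just before the proposition) that for a proper lsc convex function $\ph$ with $\ph(0)=0$, the subdifferential $\sub\ph$ is generalized linear if and only if $\ph$ is a GQF, in which case $\ph$ has the representation \eqref{gqf}. The bridge between $\sub f$ and $\d^2 f$ is furnished by formula \eqref{newq} in Remark~\ref{monotone}. Since $f$ is assumed twice epi-differentiable at $\ox$ for $\ov$, the subgradient mapping $\sub f$ is proto-differentiable at $\ox$ for $\ov$ (this is a consequence of \cite[Theorem~13.40]{rw} invoked in Remark~\ref{monotone} under prox-regularity and subdifferential continuity), so Remark~\ref{monotone} applies at $(\ox,\ov)$ and yields
\begin{equation*}
D(\sub f)(\ox,\ov)+rI = \sub g, \qquad g:=\tfrac{1}{2}\d^2 f(\ox,\ov)+\tfrac{r}{2}\|\cdot\|^2.
\end{equation*}
Moreover $g$ is proper, lsc, and convex by Remark~\ref{monotone}, and $g(0)=0$ since $\d^2 f(\ox,\ov)(0)=0$.

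Next, I would observe that the affine shift by $rI$ transforms the graph by the linear automorphism $\Psi(w,\xi):=(w,\xi+rw)$ of $\X\times\X$; consequently $\gph \sub g=\Psi(\gph D(\sub f)(\ox,\ov))$, so $\sub g$ is generalized linear if and only if $D(\sub f)(\ox,\ov)$ is generalized linear. Applying the cited \cite[Proposition~4.1]{r85} to $g$, this in turn is equivalent to $g$ being a GQF with representation \eqref{gqf}: there exist a linear subspace $S=\dom g=\dom \d^2 f(\ox,\ov)$ and a linear symmetric positive definite operator $M=T_0\circ\Pi_S$ with $T_0:S\to S$ such that $g(w)=\tfrac12\la Mw,w\ra+\dd_S(w)$.

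Finally, I would translate this representation back to $\d^2 f(\ox,\ov)$. On $S$ one has $\d^2 f(\ox,\ov)(w)=2g(w)-r\|w\|^2=\la (T_0-rI_S)w,w\ra$, so setting $T:=T_0-rI_S:S\to S$, which is linear and symmetric, gives
\begin{equation*}
\d^2 f(\ox,\ov)(w)=\la (T\circ\Pi_S)w,w\ra+\dd_S(w)\quad\mbox{for all}\;\;w\in\X,
\end{equation*}
i.e.\ $\d^2 f(\ox,\ov)$ is a GQF. Conversely, if $\d^2 f(\ox,\ov)$ is a GQF with domain $S$ and symmetric operator $T:S\to S$, then $g$ is a GQF with the same domain and operator $T+rI_S$, and reversing the chain above shows $D(\sub f)(\ox,\ov)$ is generalized linear.

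The only non-routine step is ensuring \eqref{newq} is available at $(\ox,\ov)$, which reduces to confirming proto-differentiability of $\sub f$ from twice epi-differentiability of $f$ together with prox-regularity and subdifferential continuity; everything else is a direct application of the cited convex analytic equivalence and a bookkeeping of the quadratic shift by $\tfrac r2\|\cdot\|^2$.
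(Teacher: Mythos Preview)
Your proposal is correct and follows essentially the same route as the paper's own proof: shift $\d^2 f(\ox,\ov)$ by $r\|\cdot\|^2$ via Remark~\ref{monotone} to land in the convex setting, apply the convex equivalence from \cite[Proposition~4.1]{r85}, and then undo the quadratic shift. The paper states this argument tersely (deferring details to \cite[Proposition~3.2]{hs23}), whereas you spell out explicitly the graph automorphism $\Psi$ and the translation between the operators $T_0$ and $T=T_0-rI_S$, but the underlying idea is identical.
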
 
 \begin{proof}The claimed equivalence can be proven using a similar argument
 as \cite[Proposition~3.2]{hs23}. The main driving force in the proof is the fact that by Remark~\ref{monotone}, there exists $r\ge 0$ such that the function $\ph$, defined by 
$\ph(w)=\d^2 f(\ox,\ov)(w)+ r\|w\|^2$ for any $w\in \X$, is convex. The known equivalence for convex functions from \cite[Proposition~4.1]{r85} immediately justifies the claimed equivalence. The representation of the second subderivative falls also directly out of \eqref{gqf}.
 \end{proof}

The GQF property of the second subderivative of prox-regular functions is prevalent in a neighborhood of a point of their graphs of subdifferential mappings in the sense that it holds for almost any point in such a neighborhood, as shown below. We are going to take advantage of this phenomenon in this paper to conduct a thorough analysis of tilt-stable local minimizers of a class of matrix optimization problems.

  \begin{Remark}\label{lip-man}{\rm
Given a  prox-regular and subdifferentially continuous function $f:\X\to \oR$ at $\ox\in \X$ for $\ov\in \sub f(\ox)$, it is well-known that $\gph \sub f$ is a Lipschitzian manifold of dimension $n$
in the sense of \cite[Definition~9.66]{rw}, where $n=\dim \X$; see \cite[Proposition~13.46]{rw} for more detail. This implies that 
for any $(x,v)\in \gph \sub f$ in a neighborhood of $(\ox,\ov)$, the graphical set $\gph \sub f$ is smooth at {almost all} $(x,v)$ in the sense of  \cite[Definition~2.2(a)]{hs23}. The latter amounts in our current framework to saying that $\sub f$ is proto-differentiable at $x$ for $v$ and 
 the tangent cone $T_{\ss \gph \sub f}(x,v)$ is an $n$ dimensional linear subspace of $\X\times \X$ for all such $(x,v)$. Recall from Remark~\ref{proxr} that prox-regularity and subdifferential continuity hold for all such $(x,v)\in \gph \sub f$ in a neighborhood of $(\ox,\ov)$.
 Thus,  it follows from  \cite[Theorem~13.40]{rw}  that 
$$
D(\sub f)(x, v)= \sub\big(\frac{1}{2} \d^2 f(x, v)\big).
$$
Since $D(\sub f)(x, v)$ is generalized linear, we conclude from Proposition~\ref{quadform} that the second subderivative $\d^2 f(x, v)$ is a  GQF. In summary, for any prox-regular and subdifferentially continuous function $f$ at $\ox$ for $\ov$,
 we find a neighborhood ${\cal O}$ of $(\ox,\ov)$ for which at almost every point  $(x,v)\in (\gph \sub f)\cap {\cal O}$,
the second subderivative $\d^2 f(x, v)$ is a GQF. 
In what follows,  the set of all such pairs $(x,v)\in  {\cal O}$ is denoted by $\Tilde{\cal O}$. 
It is also important to add to this discussion that 
in this framework, the set of all points $(x,v)\in \gph \sub f$ close to $(\ox,\ov)$ at which $\sub f$ is proto-differentiable is included in the set of all points $(x,v)$, where $D(\sub f)(x,v)$ is generalized linear. 
}
 \end{Remark}

 Using the discussion above, we characterize tilt-stable local minimizers of prox-regular functions. Note that \cite[Theorem~2.1]{chn} presents a similar neighborhood characterization of tilt-stable local minimizers of a function without 
the restriction to all points at which either proto-differentiability or generalized linearity is satisfied. 

\begin{Proposition}[second-order characterization of tilt stability]\label{tiltch}
 Let $f:\X\to\oR$ be prox-regular and subdifferentially continuous at
$\ox$ for $\ov=0$ and $\kappa>0$.   Then, the following are equivalent.
\begin{itemize}[noitemsep]
\item [\rm{(a)}]  The point $\ox$ is a tilt-stable local  minimizer for $f$ with constant $\kappa$.
\item [\rm{(b)}]  There is a constant $\eta>0$ such that for all $w\in \X$ we have
\begin{equation*}%\label{cso}
\la u,w\ra\ge \frac{1}{\kappa}\|w\|^2\quad\mbox{for all}\quad u\in D(\partial f)(x,v)(w) 
\end{equation*}
and all $(x,v)\in \big( \gph \sub f\big) \cap \B_\eta(\ox,0)$ such that $\sub f$ is proto-differentiable at $x$ for $v$.  
\item [\rm{(c)}]  There is a constant $\eta>0$ such that for all $w\in \X$ we have
\begin{equation*}%\label{cso}
\la u,w\ra\ge \frac{1}{\kappa}\|w\|^2\quad\mbox{for all}\quad u\in D(\partial f)(x,v)(w) 
\end{equation*}
and all $(x,v)\in \big( \gph \sub f\big) \cap \B_\eta(\ox,0)$ such that $D(\partial f)(x,v)$ is generalized linear. 
\end{itemize}
\end{Proposition}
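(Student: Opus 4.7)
The plan is to route all three equivalences through the uniform second-order growth characterization of Proposition~\ref{usogc2}, combined with Remark~\ref{monotone} (semiconvexity of the second subderivative), Proposition~\ref{quadform}, and the almost-everywhere structure of $\gph\sub f$ recorded in Remark~\ref{lip-man}. The driving observation is that, at a proto-differentiable point $(x,v)$, the graphical derivative inequality in (b) is exactly the pointwise bound $\d^2 f(x,v)(w)\ge \tfrac{1}{\kappa}\|w\|^2$, via Euler's identity for convex positively homogeneous functions of degree two.

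For (a)~$\Rightarrow$~(c): Proposition~\ref{usogc2} supplies \eqref{usogc} on a common neighborhood $U\times V$ of $(\ox,0)$. Fix any $(x,v)\in(\gph\sub f)\cap (V\times U)$, any $w\in\X$, and any $u\in D(\sub f)(x,v)(w)$. By definition of the graphical derivative there exist $t_k\searrow 0$ and $(w_k,u_k)\to(w,u)$ with $(x+t_kw_k,v+t_ku_k)\in \gph\sub f$, both points lying in $U\times V$ for large $k$. Applying \eqref{usogc} at $(x,v)$ with $x':=x+t_kw_k$ and at $(x+t_kw_k,v+t_ku_k)$ with $x':=x$, then adding, cancels the $f$-values and yields
\[ t_k^2\la u_k,w_k\ra \;\ge\; \frac{t_k^2}{\kappa}\|w_k\|^2. \]
Passing to the limit gives $\la u,w\ra\ge\frac{1}{\kappa}\|w\|^2$. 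Since this derivation makes no use of proto-differentiability or generalized linearity, it simultaneously produces (b) and (c). The implication (c)~$\Rightarrow$~(b) is then immediate from the inclusion stated at the end of Remark~\ref{lip-man}: any proto-differentiable point of $\gph\sub f$ near $(\ox,0)$ is automatically a point at which $D(\sub f)(x,v)$ is generalized linear, so (c) imposes its inequality on a strictly larger set of points than (b).

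The main obstacle is (b)~$\Rightarrow$~(a). At a proto-differentiable $(x,v)$ near $(\ox,0)$, \cite[Theorem~13.40]{rw} together with Remark~\ref{proxr} gives $D(\sub f)(x,v)=\sub(\tfrac12\d^2 f(x,v))$, while Remark~\ref{monotone} makes $q(\cdot):=\tfrac12\d^2 f(x,v)(\cdot)+\tfrac{r}{2}\|\cdot\|^2$ proper, lsc, convex, and positively homogeneous of degree two. For $u\in D(\sub f)(x,v)(w)$ we have $u+rw\in\sub q(w)$; testing the subgradient inequality of $q$ at $\lm w$ for $\lm>1$ and for $\lm<1$ and letting $\lm\to 1$ produces the Euler identity $\la u+rw,w\ra=2q(w)$, that is, $\la u,w\ra=\d^2 f(x,v)(w)$. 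Combined with (b) this forces $\d^2 f(x,v)(w)\ge\frac{1}{\kappa}\|w\|^2$ at every proto-differentiable point of $\gph\sub f$ near $(\ox,0)$.

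The remaining task—promoting this pointwise estimate on a dense subset to the uniform growth \eqref{usogc} on a full neighborhood—is the crux of the proof. My plan is to exploit the Lipschitzian-manifold structure of Remark~\ref{lip-man} to connect any two graph points in $(\gph\sub f)\cap\B_\eta(\ox,0)$ by an absolutely continuous curve in $\gph\sub f$ and to integrate the pointwise bound along its Rademacher derivative, which at almost every point lies in $\gph D(\sub f)$ at a proto-differentiable parameter. This yields the local strong monotonicity $\la v_1-v_2,x_1-x_2\ra\ge\tfrac{1}{\kappa}\|x_1-x_2\|^2$ of $\sub f$ on $(\gph\sub f)\cap\B_\eta(\ox,0)$, which in combination with prox-regularity makes the shifted function $f-\tfrac{1}{2\kappa}\|\cdot\|^2$ locally convex near $\ox$; its convex subgradient inequality is precisely \eqref{usogc}, and Proposition~\ref{usogc2} then delivers (a). The measure-theoretic step in this last paragraph is where I expect the technical difficulty to concentrate, since the almost-everywhere description of $\gph\sub f$ must be converted into a curve-wise statement that faithfully carries the graphical derivative bound to every pair of points.
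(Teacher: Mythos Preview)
Your argument for (a)$\Rightarrow$(b),(c) via the uniform growth condition is essentially the paper's approach (the paper cites \cite[Theorem~2.1]{chn}, which carries out precisely the two-point application of \eqref{usogc} that you describe). Your Euler-identity step translating the graphical-derivative inequality into $\d^2 f(x,v)(w)\ge\tfrac{1}{\kappa}\|w\|^2$ at proto-differentiable points is also correct and reappears, slightly repackaged, in the paper's subsequent Theorem~\ref{ssti}.

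The substantive divergence is in the hard direction. The paper does not attempt your curve-integration program. Once it has shown, via Proposition~\ref{quadform} and the representation \eqref{fgp}, that $D(\sub f)(x,v)$ is strongly monotone at each generalized-linear point, it invokes \cite[Proposition~5.7]{pr96} directly to conclude that $\sub f$ is locally strongly monotone around $(\ox,0)$ with constant $1/\kappa$, and then reads off tilt stability. Your plan is, in effect, a proposal to reprove that proposition from scratch. The measure-theoretic obstacle you flag is genuine: an absolutely continuous curve in $\gph\sub f$ can a priori spend positive parameter-measure in the Lebesgue-null set of non-proto-differentiable points (a line segment in $\X$ can lie entirely inside a full-measure complement's null set), so one cannot simply integrate the pointwise bound along an arbitrary connecting arc. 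The argument behind \cite[Proposition~5.7]{pr96} resolves this through the Lipschitzian-manifold chart and Rademacher's theorem applied to the chart variable, not along a single curve. Unless you intend to reproduce that machinery, the clean fix is to follow the paper: from your inequality $\la u,w\ra\ge\tfrac{1}{\kappa}\|w\|^2$ at proto-differentiable (equivalently, generalized-linear) points, conclude strong monotonicity of $D(\sub f)(x,v)$ there, and cite \cite[Proposition~5.7]{pr96} to pass to local strong monotonicity of $\sub f$.

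One minor point: you argue (c)$\Rightarrow$(b) from the inclusion $\{\text{proto-differentiable}\}\subset\{\text{generalized linear}\}$ recorded at the end of Remark~\ref{lip-man}, which is indeed the trivial direction; the paper writes ``(b) implies (c)'' but its chain (a)$\Rightarrow$(b), (c)$\Rightarrow$(a) closes the equivalence regardless.
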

\begin{proof} The implication    (a)$\implies$(b) can be established 
using a similar argument as that of the proof of the implication (i)$\implies$(ii) in \cite[Theorem~2.1]{chn}, which is based on a direct use of the characterization of tilt-stable local minimizers via the uniform quadratic growth condition from Proposition~\ref{usogc2}. Since clearly (b) implies (c), we are going to show that (c) yields (a).
Assume now (c) holds. We can assume without loss of generality  that 
$\ox=0$ and $f(\ox)=0$, and that $f(x)\ge \frac{r}{2}\|x\|^2$ for all $x\in \X$ by replacing $f$ by $f+\dd_{\B_{\ve}(\ox)}$ with constants $\ve$ and $r$ taken from  \eqref{proxr}. 
We can use some ideas in the proof of the implication (b)$\implies$(a) in \cite[Theorem~1.3]{pr98} to justify (a). Pick $(x,v)\in \big( \gph \sub f\big) \cap \B_\eta(\ox,0)$ such that 
$\gph D(\partial f)(x,v)$ is  generalized linear. Choosing a smaller $\eta$ if necessary, we can assume via Remark~\ref{proxr} that $f$ is prox-regular and subdifferentially continuous at $x$ for $v$.
By Lemma~\ref{quadform}, there is a linear symmetric operator  $T_{x,v}:K_{x,v}\to K_{x,v}$ such that 
$$
\d^2 f(x,v)(w)=\la (T_{x,v}\circ  {\Pi}_{K_{x,v}})(w),w\ra +\dd_{K_{x,v}}(w)\quad \mbox{for all}\;\; w\in \X,
$$
where $K_{x,v}=\dom \d^2 f(x,v)$ is a linear subspace.  
This tells us that 
$$
D(\partial f)(x,v)(w)=\sub\big(\frac{1}{2}\d^2 f(x,v)\big)(w)=  (T_{x,v}\circ  {\Pi}_{K_{x,v}})(w)+ N_{K_{x,v}}(w)\quad \mbox{for all}\;\; w\in \X.
$$
This implies that 
\begin{equation}\label{fgp}
D(\partial f)(x,v)(w)=T_{x,v}(w)+ K_{x,v}^\perp \quad \mbox{for all}\;\; w\in K_{x,v}. 
\end{equation} 
We claim now that $T_{x,v}$ is strongly monotone. To prove it, take $w\in K_{x,v}$ and $u\in D(\partial f)(x,v)(w)$ and conclude from (c) that 
$$
\la T_{x,v}(w),w\ra= \la u,w\ra\ge \frac{1}{\kappa}\|w\|^2,
$$
which clearly shows that the linear operator $T_{x,v}$ is strongly monotone. By \eqref{fgp}, one can easily conclude that 
$D(\partial f)(x,v)$ is strongly monotone. Appealing now to  \cite[Proposition 5.7]{pr96} tells us that $\sub f$ is 
strongly monotone locally around $(\ox,\ov)$ with constant $1/\kappa$. Thus, we find  $s>0$ and a neighborhood $V$ of $\ov$ such that 
the mapping $h(v):= (\sub f)^{-1}(v)\cap \B_s(\ox)$ is at most single-valued on $V$ and $|h(v)-h(v')|\le \kappa \|v-v'\|$   for any $v,v'\in V$ with $h(v)$,  $h(v')\not=\varnothing$. 
 Using the same argument as the one in the last paragraph of the proof of Theorem~1.3 in \cite[page~295]{pr98} which is based on the strong convexity of $\sub f$ locally around $(\ox,\ov)$ and \cite[Proposition~5.5]{pr96}, we can find some $l>0$ such that 
 $f(x)\ge l\|x\|^2$ for $x\in \B_s(\ox)$. 
Shrinking $V$ and $s>0$ if necessary, we can assume without loss of generality that $\mbox{argmin}\{f(x)-\la v,x\ra\;|\; x\in \B_s(\ox)\}$
is nonempty and is inside of the interior of $\B_s(\ox)$ for any $v\in V$, which leads us to
$\mbox{argmin}\{f(x)-\la v,x\ra\;|\; x\in \B_s(\ox)\}\subset h(v)=(\sub f)^{-1}(v)\cap \B_s(\ox)$ whenever $v\in V$.
Since $h$ is at most single-valued on $V$, we  conclude that $\ox$ is a tilt-stable local  minimizer of $f$ with constant $\kappa$. 
\end{proof}

The following characterization of tilt-stability via second subderivative can be regarded as a direct application of Proposition \ref{tiltch}.

\begin{Theorem}[characterization of tilt-stability via second subderivative]\label{ssti}
Let $f:\X\to\oR$ be prox-regular  and subdifferentially continuous at
$\ox$ for $\ov=0$ and $\kappa>0$. 
Then, the following properties are equivalent.
\begin{itemize}[noitemsep]
\item [\rm{(a)}]  The point $\ox$ is a tilt-stable local  minimizer of $f$ with constant $\kappa$.
\item [\rm{(b)}]  There is a constant $\eta>0$ such that for all $w\in \X$ we have
\begin{equation*}
\d^2 f(x, v)(w)\ge \frac{1}{\kappa}\|w\|^2
\end{equation*}
for all $(x,v)\in \big(\gph \sub f\big) \cap \B_\eta(\ox,0)$
such that  $f$ is twice epi-differentiable at $x$ for $v$.

\item [\rm{(c)}]  There is a constant $\eta>0$ such that for all $w\in \X$ we have
\begin{equation*}
\d^2 f(x, v)(w)\ge \frac{1}{\kappa}\|w\|^2
\end{equation*}
for all $(x,v)\in \big(\gph \sub f\big) \cap \B_\eta(\ox,0)$
at which $\d^2 f(x,v)$ is a GQF.
 
\end{itemize}

\end{Theorem}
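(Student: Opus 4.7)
My plan is to use Proposition~\ref{tiltch} as the pivot, translating between the graphical derivative and second subderivative inequalities via the GQF representation available at twice epi-differentiable points, and to deduce (c) directly from the uniform quadratic growth produced by tilt stability.

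\textbf{Bridge at twice epi-differentiable points.} Fix any $(x,v)\in\gph\sub f$ close to $(\ox,0)$ at which $f$ is twice epi-differentiable. By Remark~\ref{lip-man}, $\sub f$ is proto-differentiable at $(x,v)$, the identity $D(\sub f)(x,v)=\sub\bigl(\tfrac{1}{2}\d^2 f(x,v)\bigr)$ holds, and $D(\sub f)(x,v)$ is generalized linear. Proposition~\ref{quadform} then yields the representation
$$\d^2 f(x,v)(w)=\la Tw,w\ra+\dd_S(w),\qquad D(\sub f)(x,v)(w)=Tw+S^\perp\;\;\mbox{for}\;\;w\in S,$$
with $S=\dom\d^2 f(x,v)$, $T:S\to S$ linear symmetric, and $D(\sub f)(x,v)(w)=\emp$ for $w\notin S$. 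For $w\in S$ and any $u=Tw+z$ with $z\in S^\perp$, a direct computation gives $\la u,w\ra=\la Tw,w\ra=\d^2 f(x,v)(w)$, while for $w\notin S$ both the graphical derivative condition (vacuous) and the second subderivative condition ($+\infty\ge\cdot$) are trivially satisfied. Consequently, at any such $(x,v)$,
$$\la u,w\ra\ge\tfrac{1}{\kappa}\|w\|^2\;\;\forall\,u\in D(\sub f)(x,v)(w)\;\Longleftrightarrow\;\d^2 f(x,v)(w)\ge\tfrac{1}{\kappa}\|w\|^2\;\;\forall\,w\in\X.$$
As a byproduct, twice epi-differentiability at $(x,v)$ forces $\d^2 f(x,v)$ to be a GQF, so $\{\mbox{twice epi-diff points}\}\subset\{\mbox{GQF points}\}$ within the $\eta$-neighborhood.

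\textbf{Closing the cycle (a)$\Rightarrow$(c)$\Rightarrow$(b)$\Rightarrow$(a).} For (a)$\Rightarrow$(c), I would invoke Proposition~\ref{usogc2} to obtain the uniform quadratic growth $f(x)\ge f(u)+\la v,x-u\ra+\tfrac{1}{2\kappa}\|x-u\|^2$ on a neighborhood; substituting $x=u+tw'$, dividing by $t^2/2$, and taking the liminf as $t\searrow 0$, $w'\to w$ delivers $\d^2 f(u,v)(w)\ge\tfrac{1}{\kappa}\|w\|^2$ at \emph{every} $(u,v)\in\gph\sub f$ near $(\ox,0)$, hence in particular at every GQF point, establishing (c). Implication (c)$\Rightarrow$(b) is immediate from the subset inclusion above. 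For (b)$\Rightarrow$(a), the bridge converts the second subderivative inequality in (b) into the graphical derivative inequality of Proposition~\ref{tiltch}(b) at every twice epi-differentiable (equivalently, proto-differentiable) point, whence (a) follows by Proposition~\ref{tiltch}.

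\textbf{Main obstacle.} The delicate piece is the bridge itself: verifying that the GQF representation of $\d^2 f(x,v)$ and the affine form of $D(\sub f)(x,v)$ pair up so that $\la u,w\ra$ coincides with $\la Tw,w\ra=\d^2 f(x,v)(w)$ on $S$, and that the extensions outside $S$ match trivially. This step relies squarely on Proposition~\ref{quadform} together with the generalized-linearity phenomenon recorded in Remark~\ref{lip-man}. Once the identity is in hand, all three implications reduce to direct applications of Propositions~\ref{usogc2} and~\ref{tiltch}.
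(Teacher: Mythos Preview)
Your proof is correct and follows essentially the same skeleton as the paper's, but with the cycle run in the opposite direction and with a more elementary argument for the passage from tilt stability to the second-subderivative inequality. Two differences are worth noting. First, the paper deduces (a)$\Rightarrow$(b) by invoking \cite[Corollary~6.3]{pr96} (strong monotonicity of $\sub f$ transfers to strong convexity of $\tfrac12\d^2 f(x,v)$) followed by a homogeneity-plus-convexity argument; your route via Proposition~\ref{usogc2} and a direct liminf of $\Delta_t^2 f(u,v)(w')$ is shorter, avoids the external reference, and actually yields the inequality at \emph{every} $(u,v)\in(\gph\sub f)\cap\B_\eta(\ox,0)$, not just the GQF ones. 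Second, for the ``easy'' inclusion between (b) and (c), you use $\{\mbox{twice epi-diff points}\}\subset\{\mbox{GQF points}\}$ (via Remark~\ref{lip-man} and Proposition~\ref{quadform}) to get (c)$\Rightarrow$(b), whereas the paper asserts (b)$\Rightarrow$(c); both amount to the claim that, near $(\ox,0)$, these two sets coincide with the proto-differentiability locus of the Lipschitzian manifold $\gph\sub f$, but your direction is the one that is literally stated in Remark~\ref{lip-man}, so your justification is cleaner. For the bridge back to (a) you compute $\la u,w\ra=\la Tw,w\ra=\d^2 f(x,v)(w)$ directly from the GQF representation, while the paper appeals to \cite[Lemma~3.6]{chn2}; these are the same identity.
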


\begin{proof} Clearly, (b) implies (c). We are going to show that (c) yields (a).To this end, take $\eta$ from (c) and $(x,v)\in \big(\gph \sub f\big) \cap \B_\eta(\ox,0)$ such that 
$\d^2 f(x,v)$ is a GQF. 
Choosing a smaller $\eta$ if necessary, we can assume via Remark~\ref{proxr} that $f$ is prox-regular and subdifferentially continuous at $x$ for $v$.
Note also that $f$ is twice epi-differentiable at $x$ for $v$ according to the last part of Remark~\ref{lip-man} and the fact proto-differentiablity and 
twice epi-differentiability are equivalent for $f$ by \cite[Theorem~13.40]{rw}. Let $(w,u)\in \gph D(\partial f)(x,v)$.
  Since $f$ is twice epi-differentiable at $x$ for $v$, we conclude from \cite[Theorem~13.40]{rw}
that $u\in \sub\big(\frac{1}{2} \d^2 f(x, v)\big)(w)$. By  \cite[Lemma 3.6]{chn2}, we have  $\d^2 f(x, v)(w)=\la u,w\ra$. Appealing now to  Proposition~\ref{tiltch}(c) tells us that   $\ox$ is a tilt-stable local  minimizer of $f$ with constant $\kappa$, which proves (a).

To prove the implication (a)$\implies$(b), we  conclude from the uniform growth condition from Proposition~\ref{usogc2} and the validity of (a) that $\Delta^2_t f(x,v)(w)\ge \|w\|^2/\kappa$ for any $w\in \X$, $t>0$, and $(x,v)\in \gph \sub f$ sufficiently close to $(\ox,0)$, which proves (c) and hence completes the proof. 
\end{proof}

Note that a neighborhood characterization of tilt-stable local minimizers of convex functions was recently established in \cite[Corollary~2.5]{Nghia}. The latter, however, doesn't require for the points  to be taken from set of points at which the function is either twice epi-differentiable or its second subderivative is a GQF
as Theorem~\ref{ssti}. We should also add here that the authors in the recent preprint \cite{kmpv} studied the relationship between the second subderivative and tilt-stable local minimizers of prox-regular functions. Indeed, they showed in \cite[Corollary~2.6]{kmpv} that the variational s-convexity (cf. \cite[Definition~2.2]{kmpv}), which encompasses the concept of tilt stability, implies a similar inequality for the second subderivative as the one in Theorem~\ref{ssti}(b) without restricting the points from a neighborhood at which the function is twice epi-differentiable. 
Note that such a result is a direct consequence of the uniform second-order growth condition. However, the opposite direction of such a result, which was not achieved in \cite{kmpv}, is far more challenging and requires a different approach. 
Theorem~\ref{ssti} goes further and provides a characterization of tilt stability using second subderivatives, which has no counterpart in \cite{kmpv}. 

 The following definition, motivated mainly by Remark~\ref{lip-man},  appeared recently in  \cite[equation~(4.9)]{r23} for convex functions.

\begin{Definition}\label{def:quadbddefi} Suppose that  $f:\X\to \oR$ is a prox-regular and subdifferentially continuous function at $\ox\in \X$ with $f(\ox)$ finite for $\ov\in \sub f(\ox)$. Given the set $\Tilde{\cal O}$ from Remark~{\rm\ref{lip-man}} on which $\d^2 f(x,v)$ is a GQF whenever $(x,v)\in \gph \sub f$, the quadratic bundle of $f$ at $\ox$ for $\ov$, denoted $\quadr f(\ox,\ov)$, is defined by 
$$
q\in \quadr f(\ox,\ov) \iff 
\begin{cases}
q\;\mbox{is a GQF on \X, and }\;\; \exists\, (x^k,v^k)\in (\gph \sub f)\cap\cal \Tilde{\cal O}\\
\mbox{such that}\;\; (x^k,v^k)\to (\ox,\ov), \;\; \d^2 f(x^k,v^k)\xrightarrow{e} q. 
%\mbox{with}\;\d^2 f(x^k,v^k)\;\mbox{being a GQF}.

\end{cases}
$$
\end{Definition}

A similar object can be defined using graphical derivatives of $\sub f$ for any $(x,v)\in (\gph \sub f)\cap \Tilde{\cal O}$. 
To that end, define the set-valued mapping $R_{\ox,\ov}:\X\tto \X$ with $(\ox,\ov)\in \gph \sub f$ so that 
\begin{equation}\label{mapr}
\gph R_{\ox,\ov} = \limsup_{(x,v)\xrightarrow{({\ss \gph} \sub f)\cap \Tilde{\cal O}} (\ox,\ov)} \gph D(\sub f)(x, v) =  \limsup_{(x,v)\xrightarrow{({\ss \gph} \sub f)\cap \Tilde{\cal O}} (\ox,\ov)} T_{\ss \gph \sub f}(x,v).
\end{equation}
It is important to mention here that this construction was first defined in the proof of the implication (b)$\implies$(a) in \cite[Theorem~2.1]{pr98}. One could alternatively define 
the mapping $R_{\ox,\ov}$ as 
\begin{equation}\label{mapr2}
\gph R_{\ox,\ov} =  \bigcup\big\{ L\big|\;   L\in S_{\ox,\ov}\big\},
\end{equation}
where   $S_{\ox,\ov}$ is defined by 
\begin{equation}\label{mapr3}
S_{\ox,\ov}:= \big\{ L\in {\cal L}_n\big|\;  \exists\, (x^k,v^k)\xrightarrow{(\ss \gph \sub f)\cap\Tilde{\cal O}} (\ox,\ov)\; \mbox{such that}\; T_{\ss \gph \sub f}(x^k,v^k)\to L\big\}.
\end{equation}
Here ${\cal L}_n$ stands for the set of all  linear subspaces of $\X\times \X$ of dimension $n$ with $n=\dim \X$. 
The latter set in the right-hand side of \eqref{mapr2} was recently introduced in \cite[Definition~3.3]{go} using a different method, which is  equivalent to the given definition above. 
To see
why \eqref{mapr2} holds,  recall first  from \cite[page~116]{rw} that a sequence of sets $\{C^k\b$ in $\X$ is called to {\em escape to the horizon} if $\limsup_{k\to \infty} C^k= \varnothing$. 
An important example of such a sequence of sets in our framework in this paper is when each $C^k$ is a linear subspace of $X$. Since we clearly have 
$0\in \limsup_{k\to \infty} C^k$, we can conclude that the sequence $\{C^k\b$  doesn't escape to the horizon. In such a case, one can use \cite[Theorem~4.18]{rw}
to conclude that the sequence $\{C^k\b$ always has a convergent subsequence. 

Turing back to the proof of \eqref{mapr2}, it is easy to see that the set in the right-hand side of \eqref{mapr2} is always included in $\gph R_{\ox,\ov}$.
To get the opposite inclusion, take $(w,u)\in \gph R_{\ox,\ov}$. This gives us  sequences $\{(x^k,v^k)\b\subset ( \gph \sub f)\cap \Tilde{\cal O}$ and $(w^k,u^k)\in   L^k:= T_{\ss \gph \sub f}(x^k,v^k) $ such that 
$(x^k,v^k)\to (\ox,\ov)$ and $(w^k,u^k)\to (w,u)$ as $k\to \infty$. Since each $L^k$ is a linear subspace, we infer from the discussion above that the sequence $\{L^k\b$ doesn't escape to the horizon.  
Appealing now to \cite[Theorem~4.18]{rw}, we can assume without loss of generality that   the sequence $\{L^k\b$   is  convergent  to a set $L\subset \X\times \X$. 
Since each $L^k$ is a linear subspace of dimension $n$, one can easily see that $L$ enjoys the same property and thus belongs to  $S_{\ox,\ov}$
and $(w,u)\in L$. This proves the inclusion `$\subset$' in \eqref{mapr2} and finishes the proof. 

To proceed, we need to present the following result that provides a direct relationship between the quadratic bundle and the mapping $R_{\ox,\ov}$ in \eqref{mapr}.
Such a relationship appeared first in \cite[Proposition~3.33]{go}, but we supply a different proof below that is more compatible with our approach in this paper. Moreover, it was assumed in the latter result that $f$ is subdifferentially continuous in a neighborhood of the point in question, which is unnecessary due to Remark~\ref{proxr}.
\begin{Proposition} \label{quadn} 
Assume that  $f:\X\to\oR$ is prox-regular  and subdifferentially continuous at
$\ox$ for $\ov=0$.  Then, we have 
$$
\gph R_{\ox,\ov}=  \bigcup\big\{  \gph \sub (\sm  q )\big|\; q\in \quadr f(\ox,\ov)\big\}.
$$
\end{Proposition}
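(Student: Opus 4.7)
The plan is to leverage the convexification furnished by Remark~\ref{monotone} and reduce everything to an application of Attouch's theorem in both directions. By that remark I fix a single constant $r \ge 0$ together with a neighborhood, which I may assume coincides with $\cal O$, such that at every point $(x, v) \in \cal O \cap \gph \sub f$ where $\sub f$ is proto-differentiable, the function $g_{x,v} := \tfrac{1}{2}\d^2 f(x, v) + \tfrac{r}{2}\|\cdot\|^2$ is a proper lsc convex GQF with $g_{x,v}(0) = 0$ and with graph of subdifferential equal to $\Phi\bigl(\gph D(\sub f)(x, v)\bigr)$, where $\Phi(x, y) := (x, y + rx)$ is a linear homeomorphism of $\X \times \X$. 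By Remark~\ref{lip-man}, these ``good'' points are dense in $\cal O \cap \gph \sub f$ and the associated tangent cones $T_{\gph \sub f}(x, v) = \gph D(\sub f)(x, v)$ are $n$-dimensional linear subspaces.

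For the inclusion $\supseteq$, I fix $q \in \quadr f(\ox, \ov)$ witnessed by a sequence $(x^k, v^k) \to (\ox, \ov)$ in $\cal O \cap \gph \sub f$ with each $\d^2 f(x^k, v^k)$ a GQF and $\d^2 f(x^k, v^k) \xrightarrow{e} q$. Setting $g^k := g_{x^k, v^k}$ and $g := \tfrac{1}{2} q + \tfrac{r}{2}\|\cdot\|^2$, continuity of the quadratic perturbation preserves epi-convergence, so $g^k \xrightarrow{e} g$, and all functions involved are lsc proper convex. Attouch's theorem (cf.\ \cite[Theorem~12.35]{rw}) delivers the graphical convergence $\sub g^k \to \sub g$. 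For any $(w, u) \in \gph \sub(\tfrac{1}{2} q)$ the pair $(w, u + rw)$ lies in $\gph \sub g$, so there exist $(w^k, z^k) \in \gph \sub g^k$ with $(w^k, z^k) \to (w, u + rw)$. Writing $u^k := z^k - rw^k$, the identity $\sub g^k = D(\sub f)(x^k, v^k) + rI$ from Remark~\ref{monotone} gives $(w^k, u^k) \in \gph D(\sub f)(x^k, v^k)$ with $(w^k, u^k) \to (w, u)$, which is exactly $(w, u) \in \gph R_{\ox, \ov}$.

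For the inclusion $\subseteq$, I pick $(w, u) \in \gph R_{\ox, \ov}$ and invoke the characterization \eqref{mapr2}--\eqref{mapr3}, combined with the density of good points, to produce a sequence $(x^k, v^k) \to (\ox, \ov)$ in $\cal O \cap \gph \sub f$ at good points such that the $n$-dimensional linear subspaces $L^k := \gph D(\sub f)(x^k, v^k)$ converge to some $n$-dimensional linear subspace $L$ containing $(w, u)$; passing to a subsequence is justified because $\{L^k\}$ does not escape to the horizon, thanks to \cite[Theorem~4.18]{rw}. Then $\gph \sub g^k = \Phi(L^k) \to \Phi(L) =: L'$, and $L'$ is the graph of a monotone generalized linear operator (monotonicity passes to the graphical limit, and $L'$ is a linear subspace). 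By \cite[Proposition~4.1]{r85} this operator equals $\sub g$ for some proper lsc convex GQF $g$ normalized by $g(0) = 0$. Since $g^k(0) = 0 = g(0)$ and $\sub g^k \to \sub g$ graphically, Attouch's theorem in its converse form yields $g^k \xrightarrow{e} g$. Setting $q := 2g - r\|\cdot\|^2$ makes $q$ a GQF with $\d^2 f(x^k, v^k) = 2g^k - r\|\cdot\|^2 \xrightarrow{e} q$, so $q \in \quadr f(\ox, \ov)$; and $\gph \sub(\tfrac{1}{2} q) = \Phi^{-1}(L') = L$ contains $(w, u)$, as desired.

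The main obstacle is the $\subseteq$ direction, where one must reconstruct an actual GQF out of a purely set-theoretic limit of tangent subspaces. This requires weaving together three ingredients --- compactness of $n$-dimensional subspaces via non-escape to the horizon, the representation of generalized linear monotone operators as subdifferentials of convex GQFs from \cite[Proposition~4.1]{r85}, and both directions of Attouch's theorem --- all coordinated by the uniform prox-regularity constant $r$ from Remark~\ref{monotone}, which is what ensures that the convexified functions $g^k$ and $g$ are simultaneously convex and amenable to the same Attouch-type analysis.
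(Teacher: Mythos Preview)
Your approach is essentially the paper's: convexify via the uniform prox-regularity constant $r$, then push everything through Attouch's theorem. The $\supseteq$ direction is fine and matches the paper almost verbatim.

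In the $\subseteq$ direction there is a genuine gap. You assert that $L'$ is the graph of a monotone generalized linear operator and then invoke \cite[Proposition~4.1]{r85} to produce a convex GQF $g$ with $\sub g = L'$. But \cite[Proposition~4.1]{r85}, as used in this paper, only says that for a \emph{given} proper lsc convex function $\ph$ with $\ph(0)=0$, $\ph$ is a GQF iff $\sub\ph$ is generalized linear. It does not say that every monotone operator with linear graph is a subdifferential, and in fact that is false: a skew linear map such as a planar rotation by $\pi/2$ is maximal monotone with linear graph yet is not the subdifferential of any function. So ``monotone $+$ generalized linear'' is not enough to conjure up $g$.

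What you actually need---and what the paper does---is to track \emph{cyclical} maximal monotonicity through the limit. Each $\sub g^k$ is cyclically maximal monotone (being the subdifferential of a proper lsc convex function), and this property survives graphical convergence: maximality by \cite[Theorem~12.32]{rw} (non-escape to the horizon holds since $0\in\gph\sub g^k$), and cyclicity by passing to the limit in the cyclical monotonicity inequalities. Then \cite[Theorem~12.45]{rw} furnishes a proper lsc convex $g$ with $\sub g$ having graph $L'$, after which your citation of \cite[Proposition~4.1]{r85} correctly upgrades $g$ to a GQF. With this repair in place your argument coincides with the paper's.
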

\begin{proof} The proof of the inclusion `$\supset$' falls directly out of 
Attouch's theorem from \cite[Theorem~12.35]{rw} as argued in the proof of \cite[Proposition~3.33]{go}. Indeed, if $q\in \quadr f(\ox,\ov)$, we find 
$(x^k,v^k)\in \gph \sub f$ converging to $(\ox,\ov)$ such that $\d^2 f(x^k,v^k)\xrightarrow{e} q$ and $\d^2 f(x^k,v^k)$ is a GQF for each $k$.
Using Remark~\ref{monotone}, Proposition~\ref{quadform}, and \eqref{newq}, 
we arrive at $\gph \sub (\sm  q )\subset \gph R_{\ox,\ov}$.

To prove the opposite inclusion, we 
proceed differently. By \eqref{mapr2}, take $L\subset \gph R_{\ox,\ov}$. So, we find a  sequence $\{(x^k,v^k)\b\subset ( \gph \sub f)\cap \Tilde{\cal O}$
such that $L^k:= T_{\ss \gph \sub f}(x^k,v^k)\to L$. Suppose  $f$ is prox-regular at $\ox$ for $\ov$ with constants $\ve $ and $r$ satisfying \eqref{prox}. It follows from  Remark~\ref{proxr} that  $f$ is prox-regular and subdifferentially continuous at $x^k$ for $v^k$ with the same constant $r$ and perhaps a smaller $\ve$ for any $k$ sufficiently large. By Remark~\ref{monotone}, the latter tells us that $\sub f +rI$ is monotone, which coupled again with Remark~\ref{monotone}
allows us to conclude that  the mapping $w\mapsto \frac{1}{2} \d^2 f(x^k, v^k)(w)+ \frac{r}{2}\|w\|^2$ is convex. This implies   
 for any $k$ sufficiently large that $H^k:= D(\sub f)(x^k,v^k)+rI$ is cyclically maximal monotone; see \cite[Definition~12.24]{rw} for its definition. Moreover, we have 
 $$
 \gph H^k =M( L^k)
 \quad \mbox{with}\;\;    M:=\begin{bmatrix}
 I & 0\\
 rI& I
 \end{bmatrix}.
 $$
 This, coupled with \cite[Theorem~4.27]{rw} and the convergence of $L^k$ to $L$, demonstrates 
 that the sequence $\{\gph H^k\b$ converges to  $M(L)$. Since $H^k$ 
 are  cyclically  maximal monotone, $M(L)$ is the graph of a cyclically  maximal monotone mapping due to \cite[Theorem~12.32]{rw}.
 Appealing now to \cite[Theorem~12.45]{rw} tells us that we can find a proper lsc convex function $h:\X\to \oR$ such that $M(L)=\sub h$. 
 Moreover, we can conclude that $\sub h$ is generalized linear, since $L^k$ is a linear subspace. 
Consequently, it follows from Attouch's theorem (cf. \cite[Theorem~12.35]{rw}) that $\frac{1}{2} \d^2 f(x^k, v^k)+ \frac{r}{2}\|\cdot\|^2\xrightarrow{e}  h$ and hence $\d^2 f(x^k, v^k)\xrightarrow{e}  2h- r\|\cdot\|^2:= q$. 
We know that  $M$ is invertible and thus arrive at 
$$
L= M^{-1}(\gph \sub h) =\begin{bmatrix}
 I & 0\\
- rI& I
 \end{bmatrix}(\gph \sub h)= \gph \sub (\sm q).
$$
Since $q\in \quadr f(\ox,\ov)$, we get the inclusion `$\subset$' in the claimed equality in the theorem, which  completes the proof.
\end{proof}

We proceed with justifying the sequential compactness of the quadratic bundle for prox-regular functions in the epi-convergence topology, which has  important applications in our results at the end of  this section.  To this end, suppose that $f:\X\to\oR$ is prox-regular  and subdifferentially continuous at $\ox$ for $\ov$ and that the constants $r$ and $\ve$ satisfy \eqref{prox}. According to Remark~\ref{proxr}, $f$
is prox-regular and subdifferentially continuous at $x$ for $v$ whenever 
$(x,v)\in \gph \sub f$ is sufficiently close to $(\ox,\ov)$ with the same constant $r$. For any such a pair $(x,v)$, we can conclude from \cite[Theorem~4.4]{pr96} that for any $\gamma\in (0,1/r)$ and any $z$ close to $x+\gamma v$ we have
$$
\mbox{prox}_{\gamma f}(z)= (I+\gamma T)^{-1}(z),
$$
where $T$ is a graphical localization of $\sub f$. A similar argument as \cite[Exercise~12.64]{rw} brings us to 
\begin{equation}\label{gd_prox}
 D \mbox{prox}_{\gamma f}(x+\gamma v)= (I+\gamma D(\sub f)(x,v))^{-1}.
\end{equation}
In addition, if $(x,v)\in \Tilde{\cal O}$ with the set $\Tilde{\cal O}$ taken from Remark~\ref{lip-man}, then $D(\sub f)(x,v)$ is generalized linear, which together with \eqref{gd_prox} implies that $ D \mbox{prox}_{\gamma f}(x+\gamma v)$ is generalized linear. Since $ \mbox{prox}_{\gamma f}$
is Lipschitz continuous around $x+\gamma v$, we deduce from \cite[Proposition~3.1]{r85} that the latter property of $ \mbox{prox}_{\gamma f}$ is equivalent to its differentiability at $x+\gamma v$. Given a   mapping  $\Phi\colon\X\to\Y$ that is  Lipschitz continuous around $x\in \X$, define the collection of its limiting Jacobian matrices by 
\begin{equation*}\label{radem}
\Bar\nabla \Phi(x):=\Big\{\lim_{k\to\infty}\nabla \Phi(x^k)\Big|\;x^k\to\ox,\;x^k\in\O_\Phi\Big\},
\end{equation*}
where $\O_\Phi$ stands for the set on which $\Phi$ is differentiable.

\begin{Proposition}\label{quad_top}
Assume that  $f:\X\to\oR$ is prox-regular  and subdifferentially continuous at
$\ox$ for $\ov$. Then the quadratic bundle of $f$ at $\ox$ for $\ov$ enjoys the following properties.
\begin{itemize}[noitemsep]
\item [\rm{(a)}] There is a positive constant $r$ such that $q+r\|\cdot\|^2$ is a proper lsc convex function for any $q\in \quadr f(\ox,\ov)$.

\item [\rm{(b)}] Any sequence $\{q^k\b\subset \quadr f(\ox,\ov)$ has a subsequence that epi-converges to a GQF in  $\quadr f(\ox,\ov)$.

\end{itemize}
\end{Proposition}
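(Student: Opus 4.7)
The plan is to derive (a) as a direct consequence of Remark~\ref{monotone} together with the stability of convexity under epi-convergence, and to obtain (b) by lifting the sequential compactness question to one about graph-convergence of certain $n$-dimensional linear subspaces and then returning to functions via Attouch's theorem.

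For (a), first I would extract the constants $\varepsilon>0$ and $r\ge 0$ from the prox-regularity of $f$ at $\ox$ for $\ov$ recorded in \eqref{prox}. Given any $q\in \quadr f(\ox,\ov)$, Definition~\ref{def:quadbddefi} supplies a sequence $(x^k,v^k)\in(\gph \sub f)\cap {\cal O}$ converging to $(\ox,\ov)$ along which the GQFs $\d^2 f(x^k,v^k)$ epi-converge to $q$. Because the proto-differentiability of $\sub f$ holds at each such $(x^k,v^k)$ (this is the content of Remark~\ref{lip-man}), Remark~\ref{monotone} applies with the \emph{same} constant $r$, so each function $\tfrac{1}{2}\d^2 f(x^k,v^k)+\tfrac{r}{2}\|\cdot\|^2$ is proper lsc convex. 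Adding the continuous quadratic $\tfrac{r}{2}\|\cdot\|^2$ preserves epi-convergence, so this shifted sequence epi-converges to $\tfrac{1}{2}q+\tfrac{r}{2}\|\cdot\|^2$. Properness of the limit follows from $q(0)=0$ (which holds since $q$ is a GQF), and the epi-limit of proper convex functions is convex; lower semicontinuity is automatic for epi-limits. Scaling by $2$ then yields that $q+r\|\cdot\|^2$ is proper, lsc, and convex, and the constant $r$ depends only on the data at $(\ox,\ov)$, not on $q$.

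For (b), the idea is to transfer the problem to the level of graphs. By Proposition~\ref{quadn}, every $q^k\in \quadr f(\ox,\ov)$ determines an $n$-dimensional linear subspace $L^k:=\gph \sub(\tfrac{1}{2}q^k)\subset \gph R_{\ox,\ov}$. Since linear subspaces never escape to the horizon, \cite[Theorem~4.18]{rw} furnishes a subsequence $L^{k_j}$ converging in the set sense to some $L$, and the standard compactness of the Grassmannian of $n$-dimensional subspaces in the Painlev\'e--Kuratowski topology ensures that $L$ is again an $n$-dimensional linear subspace. A diagonalization argument—exploiting the fact, extracted from the proof of Proposition~\ref{quadn}, that each $L^{k_j}$ itself arises as the set-limit of tangent spaces $T_{\gph \sub f}(x^{k_j,m},v^{k_j,m})$ along sequences $(x^{k_j,m},v^{k_j,m})\to(\ox,\ov)$ in $(\gph \sub f)\cap{\cal O}$—then produces a single sequence witnessing $L\in S_{\ox,\ov}$ in the notation of \eqref{mapr3}. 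Proposition~\ref{quadn} now yields $q\in \quadr f(\ox,\ov)$ with $L=\gph \sub(\tfrac{1}{2}q)$.

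Finally, I would pass from set-convergence of graphs back to epi-convergence of functions. Using the invertible linear map $M$ from the proof of Proposition~\ref{quadn}, the convergence $L^{k_j}\to L$ translates into graph convergence of the cyclically maximal monotone mappings $\sub(\tfrac{1}{2}q^{k_j}+\tfrac{r}{2}\|\cdot\|^2)\to \sub(\tfrac{1}{2}q+\tfrac{r}{2}\|\cdot\|^2)$, where $r$ is the constant from part (a). Attouch's theorem \cite[Theorem~12.35]{rw} then gives epi-convergence of these proper lsc convex functions, with the additive normalization pinned down by the fact that all GQFs in sight vanish at the origin; subtracting $\tfrac{r}{2}\|\cdot\|^2$ preserves epi-convergence, yielding $q^{k_j}\xrightarrow{e} q\in \quadr f(\ox,\ov)$, as required. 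The main obstacle will be the diagonalization step needed to certify that the set-theoretic limit $L$ lies in $S_{\ox,\ov}$, since this requires a quantitative handling of the successive set-convergences in the closed set $\gph R_{\ox,\ov}$ rather than a mere closedness argument.
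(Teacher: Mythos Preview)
Your argument for (a) is essentially the paper's: shift by the uniform prox-regularity constant $r$, pass to the epi-limit, and invoke the stability of convexity under epi-convergence together with $q(0)=0$.

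For (b) your route is correct but genuinely different from the paper's. The paper never runs a diagonalization. Instead, in the paragraph preceding the proposition it observes via \eqref{gd_prox} that the graphs of $D(\sub f)(x,v)$ and of $D\,\mbox{prox}_{\gamma f}(x+\gamma v)$ are related by an invertible linear change of coordinates $M$; combined with \cite[Theorem~5.40]{rw}, this sets up a bijection between $S_{\ox,\ov}$ and the set $\Bar\nabla\,\mbox{prox}_{\gamma f}(\ox+\gamma\ov)$ of limiting Jacobians of the Lipschitz map $\mbox{prox}_{\gamma f}$. Since the latter set is sequentially compact by \cite[Theorem~9.62]{rw}, one immediately gets that $S_{\ox,\ov}$ is sequentially compact in the Painlev\'e--Kuratowski topology, and in particular that the limit $L$ of any convergent subsequence of $\{L^k\}$ already lies in $S_{\ox,\ov}$. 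The passage back to epi-convergence via Attouch is then the same as yours.

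Your approach trades this prox-mapping machinery for Grassmannian compactness plus a diagonal extraction; this is more elementary (no Moreau envelope, no Rademacher-type input), but it does rely on the metrizability of set convergence on closed subsets of $\X\times\X$ (e.g.\ \cite[Theorem~4.42]{rw}) to make the diagonalization rigorous---a point you correctly flag as the ``main obstacle'' and which is routine once that metric is in hand. The paper's approach, by contrast, sidesteps the diagonalization entirely and yields the stronger conclusion that $S_{\ox,\ov}$ itself is homeomorphic to a compact subset of a matrix space, which may be of independent use.
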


\begin{proof} To prove (a), take $q\in \quadr f(\ox,\ov)$. By definition, we find a sequence $\{(x^k,v^k)\b\subset \gph \sub f$, converging to $(\ox,\ov)$, such that $\d^2 f(x^k,v^k)\xrightarrow{e} q$ and $\d^2 f(x^k,v^k)$ is a GQF for each $k$. Appealing to Remark~\ref{monotone}, we find a positive constant $r$ so that the mapping $w\mapsto \d^2 f(x^k, v^k)(w)+ r\|w\|^2$ is convex for any $k$ sufficiently large. Since $\d^2 f(x^k, v^k)+ r\|\cdot\|^2\xrightarrow{e} q + r\|\cdot\|^2$, we deduce from \cite[Theorem~7.17]{rw} that $q + r\|\cdot\|^2$ is convex. By \cite[Proposition~7.4(a)]{rw}, the latter function is lsc. Its properness results from prox-regularity of $f$ at $x^k$ for $v^k$ and the fact that 
$(0,0)\in \epi \d^2 f(x^k,v^k)$ for any $k$ sufficiently large. 

To justify (b), suppose that  the sequence $\{q^k\b\subset \quadr f(\ox,\ov)$.
Set $L^k:=\gph \sub (\sm q^k)$ and observe from  the proof of the inclusion  `$\supset$' in Proposition~\ref{quadn}  that 
 $L^k$  are linear subspaces, belonging to $S_{\ox,\ov}$, which is defined by \eqref{mapr2}. 
 On the other hand, \eqref{gd_prox} tells us that the graphs of $D(\sub f)(\ox,\ov)$ and $ D \mbox{prox}_{\gamma f}(\ox+\gamma \ov)$ coincide locally up to a change of coordinate. Since the graphical convergence and pointwise convergence of a sequence of bounded linear mappings are equivalent (cf. \cite[Theorem~5.40]{rw}), we can conclude from \eqref{gd_prox} that an element  $L\in S_{\ox,\ov}$ corresponds to an element $A\in \Bar\nabla\mbox{prox}_{\gamma f}(\ox+\gamma \ov)$ such that 
 $$
 M L= \gph A\quad \mbox{with}\quad M:=\begin{bmatrix}
     I & \gamma I\\
     I & 0
 \end{bmatrix} .
 $$
It is well known that $\Bar\nabla\mbox{prox}_{\gamma f}(\ox+\gamma \ov)$
is sequentially compact (cf. \cite[Theorem~9.62]{rw}). Since $M$ is invertible, the latter property implies that 
 the sequence $\{L^k\b$  has a subsequence, convergent to some  element $L\in S_{\ox,\ov}$. By Proposition~\ref{quadn}, we find $q\in \quadr f(\ox,\ov)$ so that $L=\gph \sub (\sm q)$. Thus we arrive at 
$L^k=\gph \sub (\sm q^k)\to \gph \sub (\sm q)=L$. 
Therefore, it follows from (a) and Attouch's theorem in \cite[Theorem~12.35]{rw} that $q^k\xrightarrow{e} q $, which completes the proof of (b). 
\end{proof}

Below, we present our first characterization of tilt-stable local minimizers of a function via its quadratic bundle. 
\begin{Theorem}[characterization of tilt-stability via quadratic bundle]\label{tsqb}
Assume that  $f:\X\to\oR$ is prox-regular  and subdifferentially continuous at
$\ox$ for $\ov=0$ and   that $\kappa$ and $\ell$ are two positive constants. 
Consider the following properties:
\begin{itemize}[noitemsep]
\item [\rm{(a)}]  The point $\ox$ is a tilt-stable local  minimizer of $f$ with constant $\kappa$.
 
\item [\rm{(b)}] The following  condition  holds: $q(w)\ge \ell \|w\|^2$ for all $w\in \X$ and all $q\in \quadr f(\ox,\ov)$. 
 \end{itemize}
 The implication {\rm(}a{\rm)}$\implies${\rm(}b{\rm)} holds for $\ell=1/\kappa$. The opposite implication is satisfied for any $\ell>1/\kappa$.

\end{Theorem}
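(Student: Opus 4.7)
The plan is to reduce both directions to the second-subderivative characterization of tilt stability in Theorem~\ref{ssti}(c), with the bridge between second subderivatives at nearby points and elements of the quadratic bundle supplied by epi-convergence together with the sequential compactness established in Proposition~\ref{quad_top}(b). The unifying observation for both implications is that a pointwise lower bound of the form $\ph(w)\ge c\|w\|^2$, whose right-hand side is continuous, passes through epi-limits.

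For (a)$\implies$(b) with $\ell=1/\kappa$, I would first invoke Theorem~\ref{ssti}(c) to obtain a radius $\eta>0$ on which $\d^2 f(x,v)(w)\ge (1/\kappa)\|w\|^2$ holds at every point of $(\gph \sub f)\cap \B_\eta(\ox,0)$ at which $\d^2 f(x,v)$ is a GQF. Given any $q\in \quadr f(\ox,0)$, Definition~\ref{def:quadbddefi} supplies a sequence $(x^k,v^k)\to(\ox,0)$ in $\gph \sub f$ with $\d^2 f(x^k,v^k)\xrightarrow{e} q$ and each $\d^2 f(x^k,v^k)$ a GQF; eventually the uniform bound applies to every member of the sequence. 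Fixing $w\in\X$ and selecting a recovery sequence $w^k\to w$ that satisfies $\limsup_k \d^2 f(x^k,v^k)(w^k)\le q(w)$ (a defining feature of epi-convergence), the pointwise inequality $\d^2 f(x^k,v^k)(w^k)\ge (1/\kappa)\|w^k\|^2$ together with $\|w^k\|\to\|w\|$ then delivers $q(w)\ge (1/\kappa)\|w\|^2$.

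For (b)$\implies$(a) with any $\ell>1/\kappa$, I would argue by contradiction. If tilt stability with constant $\kappa$ fails, then Theorem~\ref{ssti}(c) produces sequences $(x^k,v^k)\to(\ox,0)$ in $\gph \sub f$ with $\d^2 f(x^k,v^k)$ a GQF and nonzero points $w^k$ at which $\d^2 f(x^k,v^k)(w^k)<(1/\kappa)\|w^k\|^2$. Positive homogeneity of degree $2$ of the second subderivative allows rescaling to $\|w^k\|=1$, and compactness of the unit sphere then furnishes $w^k\to w$ with $\|w\|=1$ along a subsequence. I would next adapt the proof of Proposition~\ref{quad_top}(b) to this slightly more general setting: the tangent subspaces $L^k:=\gph \sub(\sm \d^2 f(x^k,v^k))$ lie in $S_{\ox,0}$; via \eqref{gd_prox} they correspond to a sequence in $\Bar\nabla\mbox{prox}_{\gamma f}(\ox)$, and by sequential compactness of the latter together with Proposition~\ref{quadn} and Attouch's theorem, a further subsequence satisfies $\d^2 f(x^k,v^k)\xrightarrow{e} q$ for some $q\in \quadr f(\ox,0)$. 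The epi-convergence inequality $\liminf_k \d^2 f(x^k,v^k)(w^k)\ge q(w)$ combined with $\d^2 f(x^k,v^k)(w^k)<1/\kappa$ yields $q(w)\le 1/\kappa$, whereas (b) enforces $q(w)\ge \ell\|w\|^2=\ell>1/\kappa$, a contradiction.

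The main obstacle is the subsequence extraction in the second direction: the sequence $\{\d^2 f(x^k,v^k)\}$ does not a priori lie inside $\quadr f(\ox,0)$, so Proposition~\ref{quad_top}(b) cannot be invoked as a black box. However, its proof mechanism, which identifies GQF second subderivatives at points of $(\gph \sub f)\cap{\cal O}$ with elements of $S_{\ox,0}$ and then exploits sequential compactness of the limiting Jacobians of $\mbox{prox}_{\gamma f}$, applies essentially verbatim once one observes that $\{(x^k,v^k)\}$ lies in $(\gph \sub f)\cap{\cal O}$ for large~$k$ by virtue of $(x^k,v^k)\to(\ox,0)$; the strict inequality $\ell>1/\kappa$ in (b) is precisely what absorbs the loss incurred when the strict pointwise bound is relaxed to its epi-limit.
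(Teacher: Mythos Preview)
Your proposal is correct and follows essentially the same route as the paper's proof: both directions are reduced to Theorem~\ref{ssti}(c), and the passage between second subderivatives at nearby GQF points and elements of $\quadr f(\ox,0)$ is handled via epi-convergence plus a compactness extraction. The only cosmetic differences are that for (a)$\implies$(b) the paper uses the lower epi-limit formula $q(w)=\liminf_{k,\,w'\to w}\d^2 f(x^k,v^k)(w')$ rather than a recovery sequence, and for (b)$\implies$(a) the paper extracts a convergent subsequence of the tangent subspaces $L^k=T_{\gph \sub f}(x^k,v^k)$ directly via \cite[Theorem~4.18]{rw} (linear subspaces do not escape to the horizon) and then applies the mechanism of Proposition~\ref{quadn}, whereas you route through the prox-Jacobian correspondence of Proposition~\ref{quad_top}(b); both compactness arguments lead to the same conclusion.
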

\begin{proof}
Assume now (a) holds and take $q\in \quadr f(\ox,\ov)$. Thus, we find a sequence $\{(x^k,v^k)\b\subset  {\cal O}\cap (\gph \sub f)$, converging to $ (\ox,\ov)$, such that 
$ \d^2 f(x^k,v^k)\xrightarrow{e} q$.    This, coupled with Theorem~\ref{ssti}(c),  implies that 
$$
q(w)=(\limi_{k\to \infty} \d^2 f(x^k, v^k))(w)=  \liminf_{k\to \infty,\, w'\to w} \d^2 f(x^k, v^k)(w')\ge \frac{1}{\kappa}\|w\|^2,
$$
where the second equality comes from \cite[equation~7(6)]{rw}. This 
 proves (b) for $\ell=1/\kappa$. Turning now to the proof of the opposite implication (b)$\implies$(a),  pick $\ell>0$ such that $\ell>1/\kappa$.
We claim that there exists $\eta>0$ such that for any $(x,v)\in \big(\gph \sub f\big) \cap \B_\eta(\ox,0)$
at which $\d^2 f(x,v)$ is a GQF and any $w\in \X$, the inequality 
\begin{equation}\label{cso}
\d^2 f(x, v)(w)\ge \frac{1}{\kappa}\|w\|^2
\end{equation}
 is satisfied. Suppose to the contrary that the latter claim fails. So, we find  sequences  $(x^k,v^k)\in \gph \sub f$,   
 converging to $(\ox,\ov)$, and $\{w^k\b\subset \X$ for which $\d^2 f(x^k,v^k)$ is a GQF and the inequality 
\begin{equation}\label{cso2}
 \d^2 f(x^k, v^k)(w^k)< \frac{1}{\kappa}\|w^k\|^2
\end{equation}
 holds for all $k$. Without loss of generality, we can assume that $\|w^k\|=1$ and $w^k\to w$ for some $w\in \X\setminus \{0\}$ and 
that   $f$ is prox-regular and subdifferentially continuous at $x^k$ for $v^k$  for any $k$ sufficiently large due to  Remark~\ref{proxr}.
 It follows from \cite[Theorem~13.40]{rw} that $D(\sub f)(x^k, v^k)= \sub\big(\frac{1}{2} \d^2 f(x^k, v^k)\big)$ for all sufficiently large $k$.
 Since $\d^2 f(x^k,v^k)$ is a GQF, we conclude from Lemma~\ref{quadform} that $D(\sub f)(x^k, v^k)$
 is  generalized linear, which implies  that $\gph D(\sub f)(x^k, v^k)=T_{\ss \gph \sub f}(x^k,v^k)$ is a linear subspace for all sufficiently large $k$.
Since each $L^k:=T_{\ss \gph \sub f}(x^k,v^k)$ is a linear subspace, we have $0\in \limsup_{k\to \infty} L^k$ and thus  the sequence $\{L^k\b$ doesn't escape to the horizon.  
Appealing now to \cite[Theorem~4.18]{rw}, we can assume without loss of generality that   the sequence $\{L^k\b$   is  convergent  to a linear subspace $L$ in  $ \X\times \X$. 
Using a similar argument as the proof of Proposition~\ref{quadn}, we find $q\in \quadr f(\ox,\ov)$ such that $L=  \gph \sub (\sm q)$
and $\d^2 f(x^k, v^k)\xrightarrow{e}    q$.  This, coupled with \eqref{cso2}, brings us to 
$$
q(w)=(\limi_{k\to \infty}  \d^2 f(x^k, v^k))(w)\le  \liminf_{k\to \infty} \d^2 f(x^k, v^k)(w^k)\le \frac{1}{\kappa} < \ell.
$$
On the other hand, it follows from $q\in \quadr f(\ox,\ov)$, (b), and $\|w\|=1$ that $q(w)\ge \ell$, a contradiction. This proves the implication (b)$\implies$(a)
and hence completes the proof.
\end{proof}

The theorem above provides a quantitative result on the characterization of tilt-stable local minimizers of a function via its quadratic bundle. In particular, it gives us a relationship between the constant of tilt stability and $\ell$, Theorem~\ref{tsqb}(b). Moreover, the condition in 
Theorem~\ref{tsqb}(b)  is desired to be expressed without the appearance of constant $\ell$ therein. Below, we record our final result in which all these considerations will be taken into account to present a simpler characterization of tilt-stable minimizers using the concept of quadratic bundle. 

\begin{Theorem}\label{pos-tilt}
Assume that  $f:\X\to\oR$ is prox-regular  and subdifferentially continuous at
$\ox$ for $\ov=0$. 
Then the following properties are equivalent:
\begin{itemize}[noitemsep]
\item [\rm{(a)}]  The point $\ox$ is a tilt-stable local  minimizer of $f$.
 
\item [\rm{(b)}] For any $w\in \X\setminus \{0\}$ and any $q\in \quadr f(\ox,\ov)$, we have $q(w) >0$.
 \end{itemize}
\end{Theorem}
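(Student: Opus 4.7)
The plan is to prove the two implications by reducing them to Theorem~\ref{tsqb}, using the sequential compactness of the quadratic bundle established in Proposition~\ref{quad_top}(b) to bridge between a pointwise positivity statement and the uniform quadratic lower bound required by Theorem~\ref{tsqb}(b).

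For the implication (a)$\implies$(b), tilt stability of $\ox$ yields a positive constant $\kappa$ so that $\ox$ is tilt-stable with constant $\kappa$. Theorem~\ref{tsqb} gives the uniform bound $q(w)\ge \tfrac{1}{\kappa}\|w\|^2$ for all $w\in \X$ and all $q\in \quadr f(\ox,\ov)$, from which $q(w)>0$ for every $w\neq 0$ is immediate.

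The substantive content is the converse (b)$\implies$(a). I would argue by contradiction to upgrade the merely pointwise positivity in (b) to the uniform lower bound $q(w)\ge \ell\|w\|^2$ needed to invoke Theorem~\ref{tsqb}. Suppose no such $\ell>0$ exists. Then for each $k\in\N$ there is $q^k\in \quadr f(\ox,\ov)$ and $w^k\in \X$ with $q^k(w^k)<\tfrac{1}{k}\|w^k\|^2$. Since each $q^k$ is a GQF with $q^k(0)=0$, we must have $w^k\in \dom q^k\setminus\{0\}$, so after normalizing and using that a GQF is positively homogeneous of degree two, we can assume $\|w^k\|=1$ and $q^k(w^k)<1/k$. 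By passing to a subsequence, $w^k\to w$ with $\|w\|=1$. By Proposition~\ref{quad_top}(b), a further subsequence satisfies $q^{k}\xrightarrow{e} q$ for some $q\in \quadr f(\ox,\ov)$, and the outer inequality characterizing epi-convergence (cf.\ \cite[equation~7(6)]{rw}) yields
$$
q(w)\le \liminf_{k\to\infty} q^k(w^k)\le 0,
$$
contradicting (b) since $\|w\|=1$. Hence some $\ell>0$ works uniformly. Picking any $\kappa>1/\ell$, Theorem~\ref{tsqb}(b)$\implies$(a) then certifies that $\ox$ is a tilt-stable local minimizer of $f$ with that constant $\kappa$, which yields (a).

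I expect the main obstacle to be the normalization step together with the verification that $w^k\in \dom q^k$, since without knowing this the inequality $q^k(w^k)<\tfrac{1}{k}\|w^k\|^2$ would be vacuous. The observation that a GQF sends the origin to zero and takes value $+\infty$ outside its linear domain handles this cleanly, after which the standard lower-semicontinuity inequality from epi-convergence together with the sequential compactness of the quadratic bundle (which is where the prox-regularity assumption really pays off, through Proposition~\ref{quad_top}) closes the argument.
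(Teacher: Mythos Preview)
Your proof is correct and follows essentially the same approach as the paper: both directions are reduced to Theorem~\ref{tsqb}, with the key step in (b)$\implies$(a) being a contradiction argument that upgrades pointwise positivity to a uniform lower bound via the sequential compactness of the quadratic bundle (Proposition~\ref{quad_top}(b)) together with the lower-semicontinuity inequality from epi-convergence. Your write-up is slightly more explicit about why $w^k\neq 0$ and about invoking Theorem~\ref{tsqb} at the end, but the argument is the same.
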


\begin{proof} The implication (a)$\implies$(b) results from Theorem~\ref{tsqb}. Assume that (b) is satisfied.
We claim that there exists $\ell>0$ for which we have $q(w)\ge \ell \|w\|^2$ for all $w\in \X$ and all $q\in \quadr f(\ox,\ov)$.
Suppose for contradiction that there are sequences $\{q^k\b\subset \quadr f(\ox,\ov)$ and $\{w^k\b\subset \X$ such that for any $k$ the estimate 
$$
q^k(w^k) < \frac{1}{k} \|w^k\|^2
$$
is satisfied. 
Without of loss of generality, we can assume that $\|w^k\|=1$ and that $w^k\to w$, where $w\in \X$ with $\|w\|=1$. 
Appealing now to Proposition~\ref{quad_top}(b), we can assume 
by passing to a subsequence if necessary that $q^k \xrightarrow{e} q$ with  $q\in  \quadr f(\ox,\ov)$, 
which brings us to 
$$
q(w)=(\limi_{k\to \infty}q^k)(w)\le  \liminf_{k\to \infty} q^k(w^k)\le 0.
$$
This clearly  is a contradiction with (b), since $w\neq 0$ and hence completes the proof.
\end{proof}

Note that tilt stability of local minimizers was first characterized in \cite{pr98} via the concept of coderivative of the subgradient mappings of prox-regular and subdifferentially continuous functions.
A neighborhood characterization via the concept of regular coderivative was established in \cite{mn14} for tilt stability of local minimizers. Quite recently, a new characterization of tilt-stable local minimizers was achieved in \cite[Theorem~7.9]{go} using the concept of the subspace contained derivative (SCD). Note that the latter concept has a direct relationship with the quadratic bundle of a function according to Proposition~\ref{quadn}. While the approach in \cite{go} relies on the characterization of tilt stability via the coderivative, our results try to avoid any dual constructions in the characterization of tilt stability of local minimizers.  Finally, we should add here that quite recently, a characterization of tilt-stable local minimizers via the concept of quadratic bundle was achieved in \cite[Theorem~5.2]{kmpv}, which is similar to Theorem~\ref{tsqb}. The approach used there, however, is different from the one  in this section. 
It is worth noting that the characterization of tilt-stable local minimizers presented in Theorem~\ref{pos-tilt}, which is widely applied in practice, also differs from the results obtained by~\cite{kmpv}. In particular, the characterization in Theorem~\ref{pos-tilt} relies essentially on Proposition~\ref{quad_top}, which was not addressed in~\cite{kmpv}. 

We close this section by pointing out that the subdifferential continuity, used throughout this section, can be dropped with no harm.
In doing so, one needs to replace the subgradient mapping $\sub f$ with its $f$-attentive  graphical  localization of $\sub f$ (see \cite{pr96} for more details) in the presented results in this section. While there are not many functions which don't enjoy subdifferential continuity, it was recently observed in \cite[Example~2.5]{kmpv} that the latter condition fails for the $l_{0}$ pseudo-norm; see \cite{kmpv} for more discussion.

\section{Quadratic Bundle of Spectral Functions}\label{sec:qbdofspfunc}
The results in Theorems~\ref{tsqb} and \ref{pos-tilt} open a new door to characterize tilt-stable local minimizers of a function via its quadratic bundle. The question  then boils down  to compute 
the quadratic bundle for different classes of functions. While this does not seem to be an easy target in general, the following discussion indicates that in many situations, we 
would probably get away with finding only one quadratic bundle that  in some sense is {\em minimal}. To motivate our discussion further, 
recall that for a function $f:\X \to \oR$ and a point $\ox\in\X$ with $f(\ox)$ finite,   the subderivative function $\d f(\ox)\colon\X\to\oR$ is defined for any $w\in \X$ by
\begin{equation*}\label{fsud}
\d f(\ox)(w)=\liminf_{\substack{
   t\searrow 0 \\
  w'\to w
  }} {\frac{f(\ox+tw')-f(\ox)}{t}};
\end{equation*}
its critical cone   at $\ox$ for $\bar v$ with $\bar v\in   \sub f(\ox)$ is defined by 
\begin{equation*}\label{cricone}
{{\cal C}_f}(\ox,\bar v)=\big\{w\in \R^n\,\big|\,\la\bar v,w\ra=\d f(\ox)(w)\big\}.
\end{equation*}

Note that the critical cone of a function has a close relationship with its second subderivative. Indeed, given $(\ox,\ov)\in \gph \sub f$ such that $\d^2 f(\ox,\ov)$ is proper, it follows from \cite[Proposition~13.5]{rw} that 
$\dom \d^2 f(\ox,\ov)\subset {{\cal C}_f}(\ox,\bar v) $. Equality, however, requires some additional assumptions. Interested readers can find  some sufficient conditions  in \cite[Proposition~3.4]{ms20} using the concept of the parabolic subderivative. 

The following result
shed more light on the idea of a minimal quadratic bundle that we are going to pursue later in this paper. While we provide a short proof for readers' connivance,
we should mention that it can be gleaned from the proof of   \cite[Theorem~2]{r23}, which uses a different approach.  Recall that the function $\th:\R^n\to \oR$ is called polyhedral if   $\epi \th$ is a polyhedral convex set.  Recall also that   a closed face $F$ of a  polyhedral convex cone $C\subset \R^d$ is defined by 
$$
F=C\cap [v]^\perp\quad \mbox{for some}\;\; v\in C^*.
$$

\begin{Proposition} \label{qbpol} Assume that $\th:\R^n\to \oR$ is a   polyhedral function and $(\ox,\ov)\in \gph \sub \th$. Then we have 
\begin{equation}\label{poly}
q(w)\ge \dd_{{\cal C}_\th  (\ox,\ov  )-{\cal C}_\th  (\ox,\ov)}(w)\quad \mbox{for all}\;\;w\in \R^n\;\;\mbox{and all}\; q\in \quadr \th(\ox,\ov).
\end{equation}
Moreover, we have $ \dd_{{\cal C}_\th  (\ox,\ov)-{\cal C}_\th  (\ox,\ov)}\in \quadr \th(\ox,\ov)$.
\end{Proposition}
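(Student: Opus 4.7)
The plan rests on two classical facts for polyhedral convex functions that together reduce the statement to a polyhedral-geometry computation. First, for any $(x,v)\in\gph\sub\th$, the identity $\d\th(x)=\sigma_{\sub\th(x)}$ yields ${\cal C}_\th(x,v)=N_{\sub\th(x)}(v)$, while the second subderivative admits the explicit form $\d^2\th(x,v)=\dd_{{\cal C}_\th(x,v)}$. Second, epi-convergence of a sequence of indicator functions is equivalent to Painlev\'e--Kuratowski set convergence of the underlying sets. Consequently, $\quadr\th(\ox,\ov)$ is populated exactly by the indicators $\dd_L$ where $L$ arises as a set limit of linear subspaces of the form ${\cal C}_\th(x^k,v^k)$ along sequences $(x^k,v^k)\in\gph\sub\th$ converging to $(\ox,\ov)$. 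The critical cone $N_{\sub\th(x)}(v)$ is a linear subspace precisely when $v\in\ri\sub\th(x)$, and in that case it equals $(\aff\sub\th(x))^\perp$.

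Write $K={\cal C}_\th(\ox,\ov)=N_{\sub\th(\ox)}(\ov)$ and let $F_0$ denote the unique face of $\sub\th(\ox)$ whose relative interior contains $\ov$. Standard polyhedral duality gives $\span K=(\aff F_0)^\perp$, hence $K-K=(\aff F_0)^\perp$. To prove the inequality, I would pick any $q\in\quadr\th(\ox,\ov)$ with witnessing sequence $(x^k,v^k)\to(\ox,\ov)$ along which each ${\cal C}_\th(x^k,v^k)$ is a linear subspace, so that $q=\dd_L$ for $L=\lim{\cal C}_\th(x^k,v^k)$. Because $\th$ is polyhedral, for $x$ in a small neighborhood of $\ox$ the subdifferential $\sub\th(x)$ equals some face $F_k$ of $\sub\th(\ox)$, and the closedness of $F_k$ together with $v^k\in F_k$ and $v^k\to\ov$ forces $\ov\in F_k$ for all large $k$, hence $F_0\subset F_k$. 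This yields ${\cal C}_\th(x^k,v^k)=(\aff F_k)^\perp\subset(\aff F_0)^\perp=K-K$, and passing to the set limit gives $L\subset K-K$, whence $\dd_L\ge\dd_{K-K}$.

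For the membership assertion $\dd_{K-K}\in\quadr\th(\ox,\ov)$, the plan is to exhibit an explicit approximating sequence. I would choose $u\in\ri K=\ri N_{\sub\th(\ox)}(\ov)$ and set $x^k=\ox+t_ku$ for a sequence $t_k\searrow 0$. The standard formula $\sub\th(\ox+tu)=\arg\max_{v'\in\sub\th(\ox)}\la v',u\ra$ valid for polyhedral $\th$ and small $t>0$, combined with the exposed-face correspondence --- directions in $\ri N_{\sub\th(\ox)}(\ov)$ expose precisely the minimal face of $\sub\th(\ox)$ containing $\ov$ --- gives $\sub\th(x^k)=F_0$. Taking $v^k\equiv\ov$, which belongs to $\ri F_0=\ri\sub\th(x^k)$, one obtains
\[
{\cal C}_\th(x^k,v^k)=N_{F_0}(\ov)=(\aff F_0)^\perp=K-K,
\]
so $\d^2\th(x^k,v^k)=\dd_{K-K}$ is a GQF for every $k$, and this constant sequence trivially epi-converges to $\dd_{K-K}$.

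The main technical obstacles are verifying the two polyhedral-geometry identities cleanly: (i) that $\sub\th(\ox+tu)$ coincides with the face of $\sub\th(\ox)$ exposed by $u$ for $u\in\ri K$ and small $t>0$, which rests on the piecewise-affine description of $\th$ near $\ox$ together with the face--normal-cone duality for polyhedra; and (ii) the claim invoked in the inequality that any face of $\sub\th(\ox)$ whose points approach $\ov$ must eventually contain $F_0$, which follows from the finiteness of the face lattice by separating $\ov$ from each of the finitely many faces not containing it. Both are classical results in polyhedral convex analysis, so once they are in hand the rest of the argument is routine.
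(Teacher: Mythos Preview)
Your proposal is correct and its overall architecture---use $\d^2\th(x,v)=\dd_{{\cal C}_\th(x,v)}$, prove the containment ${\cal C}_\th(x^k,v^k)\subset K-K$ along any GQF sequence, and exhibit a sequence achieving equality---matches the paper's. The difference lies in how the polyhedral containment is justified. The paper invokes \cite[Proposition~3.3]{hjs} as a black box: that result says that for $(x,v)\in\gph\sub\th$ near $(\ox,\ov)$ the critical cone ${\cal C}_\th(x,v)$ is always a difference of two faces of ${\cal C}_\th(\ox,\ov)$ (hence lies in $K-K$), and conversely every face of $K$ arises this way; applying it with the face $K$ itself gives the membership part. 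You instead argue on the dual side: $\sub\th(x)$ is a face of $\sub\th(\ox)$ for $x$ near $\ox$, finiteness of the face lattice forces this face to contain the minimal face $F_0\ni\ov$, and therefore ${\cal C}_\th(x,v)=(\para\sub\th(x))^\perp\subset(\para F_0)^\perp=K-K$; for membership you build the explicit sequence $(\ox+t_ku,\ov)$ with $u\in\ri K$ and use the exposed-face correspondence to get $\sub\th(x^k)=F_0$. Your route is self-contained and avoids the external citation at the cost of spelling out the face--normal-cone duality, while the paper's is shorter because that duality is already packaged into the cited lemma.
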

\begin{proof} According to \cite[Proposition~3.3]{hjs}, there is a sequence $\{(x^k,v^k)\b\subset \gph \sub \th$, converging to $(\ox,\ov)$,
such that ${\cal C}_\th(x^k,v^k)={\cal C}_\th  (\ox,\ov  )-{\cal C}_\th  (\ox,\ov)$, since ${\cal C}_\th  (\ox,\ov)$ is a face of the polyhedral set ${\cal C}_\th  (\ox,\ov)$. Moreover, 
it follows from \cite[Proposition~13.9]{rw} that $\d^2 \th(x^k, v^k)=\dd_{{\cal C}_\th(x^k,v^k)}$. Since ${\cal C}_\th(x^k,v^k)$ is a linear subspace, the second 
subderivative $\d^2 \th(x^k, v^k)$ is a GQF. Clearly, $\d^2 \th(x^k, v^k)\xrightarrow{e}  \dd_{{\cal C}_\th  (\ox,\ov  )-{\cal C}_\th  (\ox,\ov)}$, which proves that 
$\dd_{{\cal C}_\th  (\ox,\ov  )-{\cal C}_\th  (\ox,\ov)}\in \quadr \th(\ox,\ov)$. To prove \eqref{poly}, pick $q\in \quadr \th(\ox,\ov)$ and find 
a sequence $(x^k,v^k)\in \gph \sub \th$ such that $(x^k,v^k)\to (\ox,\ov)$ and $\d^2 \th(x^k, v^k)=\dd_{{\cal C}_\th(x^k,v^k)}\xrightarrow{e}    q$ with $\d^2 \th(x^k, v^k)$ being a GQF for each $k$.
It follows again from \cite[Proposition~3.3]{hjs} that ${\cal C}_\th(x^k,v^k)\subset {\cal C}_\th  (\ox,\ov  )-{\cal C}_\th (\ox,\ov)$ for all $k$ sufficiently large, which leads 
us to 
$$
\epi \dd_{{\cal C}_\th(x^k,v^k)} ={\cal C}_\th(x^k,v^k)\times [0,\infty)\subset \big({\cal C}_\th  (\ox,\ov  )-{\cal C}_\th  (\ox,\ov)\big)\times [0,\infty).
$$
Thus, we arrive at the inclusion $\epi q\subset  \big({\cal C}_\th  (\ox,\ov  )-{\cal C}_\th  (\ox,\ov)\big)\times [0,\infty)$, which clearly proves \eqref{poly}.
\end{proof}

 The result above suggests that we may expect the quadratic bundle for some functions consists of a minimal element in certain sense,
 which can be leveraged to simplify the characterization of tilt-stable local minimizers for some classes of optimization problems. This motivates
 the following definition. 
 
 \begin{Definition} \label{mquad} Suppose that  $f:\X\to \oR$ is a prox-regular function at $\ox\in \X$ with $f(\ox)$ finite for $\ov\in \sub f(\ox)$.
 We say that $f$  enjoys the minimal  quadratic bundle property at $\ox$ for $\ov$ if there exists $\bar q\in  \quadr f(\ox,\ov)$ such that 
 $q(w)\ge \bar q(w)$ for any $q\in  \quadr f(\ox,\ov)$ and any $w\in \X$. In this case, we refer to $\overline{q}$ as a minimal quadratic bundle of $f$ at $\ox$ for $\ov$. 
 \end{Definition}

According to Proposition~\ref{qbpol}, polyhedral functions enjoy the minimal  quadratic bundle property. We should add here that the concept of the minimal quadratic bundle was explored for some classes of functions  in \cite {r23} without explicitly providing a definition. Note that  \cite[Question~1]{r23} presents a convex piecewise linear quadratic function that doesn't enjoy the minimal quadratic bundle property. Our main goals in this section are first to demonstrate that a subclass of spectral functions enjoys the latter property as well and then  to compute a minimal element in the quadratic bundle 
of this class of functions. 

\begin{Corollary}\label{tilt_minimal}
Assume that  $f:\X\to\oR$ be prox-regular and subdifferentially continuous at
$\ox$ for $\ov=0$,   and that 
$f$ enjoys the minimal  quadratic bundle property at $\ox$ for $\ov$. 
Then the following properties are equivalent:
\begin{itemize}[noitemsep]
\item [\rm{(a)}]  The point $\ox$ is a tilt-stable local  minimizer of $f$.
 
\item [\rm{(b)}] There exists {a minimal quadratic bundle} $\bar q\in  \quadr f(\ox,\ov)$ such that for any $w\in \X\setminus \{0\}$  we have $\bar q(w) >0$.
 \end{itemize}
\end{Corollary}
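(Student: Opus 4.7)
The plan is to derive this corollary as an essentially immediate consequence of Theorem~\ref{pos-tilt}, combined with Definition~\ref{mquad} of the minimal quadratic bundle property. In both directions, the minimal element $\bar q$ acts as a pointwise lower bound over the entire bundle, so the positivity condition transfers between $\bar q$ and the whole family $\quadr f(\ox,\ov)$ for free.

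For the implication (a)$\implies$(b), I would invoke Theorem~\ref{pos-tilt} directly: tilt stability of $\ox$ yields $q(w)>0$ for every $w\in \X\setminus\{0\}$ and every $q\in\quadr f(\ox,\ov)$. Since the assumed minimal quadratic bundle property supplies some $\bar q\in\quadr f(\ox,\ov)$ (which is itself an element of the bundle), applying the inequality to this specific $\bar q$ gives $\bar q(w)>0$ for all $w\ne 0$, which is exactly (b).

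For the reverse implication (b)$\implies$(a), I would start with a minimal element $\bar q\in \quadr f(\ox,\ov)$ satisfying $\bar q(w)>0$ for all $w\ne 0$. By the defining property of the minimal quadratic bundle in Definition~\ref{mquad}, every $q\in \quadr f(\ox,\ov)$ satisfies $q(w)\ge \bar q(w)$ pointwise on $\X$. Hence for any such $q$ and any $w\ne 0$ we obtain $q(w)\ge \bar q(w)>0$, which verifies the hypothesis of Theorem~\ref{pos-tilt}(b) for the full bundle. That theorem then yields that $\ox$ is a tilt-stable local minimizer of $f$, which is (a).

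There is essentially no technical obstacle here: the whole argument is a one-line reduction once both Theorem~\ref{pos-tilt} and the minimal quadratic bundle property are available. The only thing to be careful about is to make sure one records that $\bar q$ itself lies in $\quadr f(\ox,\ov)$, so that the implication (a)$\implies$(b) can legitimately apply Theorem~\ref{pos-tilt} to $\bar q$; but this is built directly into Definition~\ref{mquad}. No epi-convergence, compactness, or second-subderivative manipulation needs to be repeated beyond what has already been established in Proposition~\ref{quad_top} and Theorem~\ref{pos-tilt}.
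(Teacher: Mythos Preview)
Your proposal is correct and matches the paper's own proof, which simply states that the equivalence is an immediate consequence of Theorem~\ref{pos-tilt} and Definition~\ref{mquad}. Your expanded explanation of both directions is accurate and slightly more detailed than the paper's one-line justification, but the approach is identical.
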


\begin{proof} The claimed equivalence is an immediate consequence of Theorem~\ref{pos-tilt} and Definition~\ref{mquad}.
\end{proof}

In the rest of this section, our main goal  is to show that  an important subclass of spectral functions has a minimal quadratic bundle and then to calculate such an element. To this end, we begin by recalling some notation. 
In what follows, $\S^n$ stands for the linear space of all $n\times n$ real symmetric matrices equipped with the usual Frobenius inner product and its induced norm. 
For a matrix $X\in\S^n$, we use $\lambda_1(X)\geq\lambda_2(X)\geq\dots\geq\lambda_n(X)$ to denote the eigenvalues of $X$ (all real and counting multiplicity) arranging in nonincreasing order and use $\lambda(X)$ to denote the vector of the ordered eigenvalues of $X$. Let $\Lambda(X)=\mbox{Diag}(\lambda(X))$. 
Suppose $X\in\S^n$ has the following eigenvalue decomposition:
\begin{equation}\label{eq:eig-decomp1}
X= \left[\begin{array}{ccc}
{P}_{\alpha^1} & \cdots & {P}_{\alpha^{r}}
\end{array}\right] \left[\begin{array}{ccc}
\Lambda(X)_{\alpha^1\alpha^1} &  & 0 \\
  & \ddots &  \\
0 &   & \Lambda(X)_{\alpha^{r}\alpha^{r}}
\end{array}\right]\left[\begin{array}{c}
{P}_{\alpha^1}^{\top} \\
 \vdots \\
 {P}_{\alpha^{r}}^{\top}
\end{array}\right],
\end{equation}
where $\mu_1>\cdots>\mu_r$ are the distinct eigenvalues of $X$ and 
the index sets $\al^l$ are defined by 
\begin{equation}\label{eq:def-alpha}
	\alpha^l:=\{1\le i\le n \mid {\lambda}_i(X)=\mu_l\}, \quad l=1,\dots, r.
\end{equation} 
Denote also by ${\bf O}^n$ as the set of $n-$dimensional orthogonal matrices. Suppose that $g:\S^n\to \oR$  has the composite representation  
 \begin{equation}\label{spec}
g(X)=(\th\circ\lm)(X), \;\; X\in \S^n,
\end{equation}
where $\th:\R^n\rightarrow\oR$ is a symmetric polyhedral function. 
A symmetric function means that for all permutation matrix $U\in{\cal P}^{n}$, where ${\cal P}^n$ standing for the set of n-dimensional permutation matrices,  we have $\theta({x})=\theta(U{x})$.  
Given $({X}, {Y})\in {\rm gph}\,\partial g$ and  $P \in {\bf O}^n({X}) \cap {\bf O}^n({Y})$, define ${\bf O}^n(X)$ as the set of all matrices satisfying \eqref{eq:eig-decomp1}. As a result of \cite[page 164]{Lewis96}, $g$ should be convex, which implies via   \cite[Example 13.30]{rw} that the spectral function $g$   is prox-regular and subdifferentially continuous  at every point of its domain.

Before presenting our  main result of this section, we need the following lemma about  
the explicit form of the second subderivative, which is crucial for the derivation of the main result.

\begin{Proposition}\label{sec_subd}
 Assume that $g$ has the spectral representation in \eqref{spec} and  that $({X}, {Y}) \in \operatorname{gph} \partial g$. Then the following properties hold:
 \begin{itemize}[noitemsep]
\item [\rm{(a)}] 
 If  $X$ has the eigenvalue decomposition \eqref{eq:eig-decomp1} with $P\in {\bf O}^n({X}) \cap {\bf O}^n({Y})$, then $g$ is twice epi-differentiable at
 $X$ for $Y$ and 
 its second subderivative at $X$ for $Y$ can be calculated for any $H\in \S^n$ by 
 \begin{equation}\label{second_sub}
 \d^2g(X,Y)(H)= \varUpsilon_{{X},{Y}}(H)+ \dd_{{\cal C}_g(X,Y)}(H),    
 \end{equation}
 where 
 \begin{eqnarray}
\varUpsilon_{{X},{Y}}(H)&=& 2\sum_{l=1}^{r}  \big\la \Lm(Y)_{\al^l\al^l}, P_{\al_m}^\top H ( \mu_{l}I- X)^{\dagger} H  P_{\al^l}\big\ra\nonumber\\
&=&2\sum\limits_{1\leq l< l'\leq r}\sum\limits_{i\in\alpha^l}\sum\limits_{j\in\alpha^{l'}}\frac{\lambda_i({Y})-\lambda_j({Y})}{\lambda_i(X)-\lambda_j(X)}(P^{\top}HP)^2_{ij},\label{eq:sigtsdp}
\end{eqnarray}
and  where   $\al^l$, $l=1,\ldots,r$, come from \eqref{eq:def-alpha}, and 
$P\in {\bf O}^n({X}) \cap {\bf O}^n({Y})$. 
\item [\rm{(b)}] We have ${\cal C}_g(X,Y)=N_{\sub g(X)}(Y)$. Moreover, the second subderivative $\d^2g(X,Y)$ is a GQF if and only if $Y\in \ri \sub g(X)$. 
\end{itemize}
\end{Proposition}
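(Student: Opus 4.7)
The plan is to prove (a) by a direct second-order spectral perturbation analysis, and then to derive (b) from (a) using convex duality. The core idea is to expand the second difference quotient $\Delta_t^2 g(X,Y)(H)$ by substituting a second-order expansion of $\lambda(X+tH)$ into $\theta$, and then to exploit the polyhedrality of $\theta$ to collapse its second subderivative to an indicator.

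For part (a), I would fix $H\in\S^n$ and small $t>0$. Using the simultaneous diagonalization $P\in\mathbf{O}^n(X)\cap\mathbf{O}^n(Y)$ and standard matrix perturbation theory, I would expand $\lambda(X+tH)$ to second order in $t$: within each block $\al^l$ the eigenvalues split as $\mu_l+t\,\nu_i^l+\tfrac{t^2}{2}\omega_i^l+o(t^2)$, where $\nu^l$ is (up to a permutation) the eigenvalue vector of the diagonal block $(P^\top H P)_{\al^l\al^l}$ and the second-order correction $\omega_i^l$ is produced by the resolvent $(\mu_l I-X)^\dagger$ acting on the off-block-diagonal entries of $P^\top H P$. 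Substituting this into $g(X+tH)=\theta(\lambda(X+tH))$ and exploiting the symmetry of $\theta$ (to absorb the block-wise permutations) together with the identity $\la Y,H\ra=\la \lambda(Y),\nu\ra$ (which holds precisely because $P$ also diagonalizes $Y$) yields, after a direct resolvent computation,
\begin{equation*}
\Delta_t^2 g(X,Y)(H)=\Delta_t^2\theta(\lambda(X),\lambda(Y))(\nu(H))+\Upsilon_{X,Y}(H)+o(1),
\end{equation*}
with $\Upsilon_{X,Y}$ exactly the quadratic form in \eqref{eq:sigtsdp}. Since $\theta$ is polyhedral it is twice epi-differentiable with $\d^2\theta(\lambda(X),\lambda(Y))=\dd_{{\cal C}_\theta(\lambda(X),\lambda(Y))}$ by \cite[Proposition~13.9]{rw}; the availability of polyhedral recovery sequences for $\theta$ together with joint continuity of $\Upsilon_{X,Y}$ in $H$ upgrades the pointwise liminf to a full epi-limit, giving the twice epi-differentiability of $g$ and the formula \eqref{second_sub}, with finiteness domain equal to ${\cal C}_g(X,Y)$.

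For part (b), since $\theta$ is symmetric polyhedral it is convex, hence $g$ is convex by \cite[page~164]{Lewis96}. For any convex lsc function one has $\d g(X)=\sigma_{\sub g(X)}$ (the support function of the subdifferential), and so the standard convex duality identity yields
\begin{equation*}
{\cal C}_g(X,Y)=\{H:\la Y,H\ra=\sigma_{\sub g(X)}(H)\}=N_{\sub g(X)}(Y).
\end{equation*}
Combined with part (a), we obtain $\dom\d^2 g(X,Y)={\cal C}_g(X,Y)=N_{\sub g(X)}(Y)$. By Proposition~\ref{quadform}, $\d^2 g(X,Y)$ is a GQF if and only if its domain is a linear subspace, and the normal cone to a convex set at a point is a linear subspace exactly when that point lies in the relative interior of the set. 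Therefore $\d^2 g(X,Y)$ is a GQF if and only if $Y\in\ri\sub g(X)$.

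The main obstacle is the eigenvalue perturbation step of part (a): because the spectrum of $X$ is degenerate on each block $\al^l$, the expansion of $\lambda(X+tH)$ must be carried out with enough uniformity in $H$ to pass to an epi-limit (not merely a pointwise one), and the internal permutations of eigenvalues within each block must be tracked carefully so that the symmetry of $\theta$ cancels them. The resolvent factor $(\mu_l I-X)^\dagger$ is the natural object that renders these second-order perturbations tractable, and is precisely what produces the Daleckii--Krein-type divided-difference coefficients $\tfrac{\lambda_i(Y)-\lambda_j(Y)}{\lambda_i(X)-\lambda_j(X)}$ in \eqref{eq:sigtsdp}.
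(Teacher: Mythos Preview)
The paper itself does not prove (a) from scratch: it simply cites \cite[Corollary~5.9]{MSarabi23} (with \cite{cuiding} as a parallel source) for the twice epi-differentiability and the formula \eqref{second_sub}, and then for (b) invokes \cite[Theorem~8.30]{rw} for ${\cal C}_g(X,Y)=N_{\sub g(X)}(Y)$ and reads off the GQF equivalence directly from \eqref{second_sub}.

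Your route for (a) is therefore genuinely different: you propose to rederive the second subderivative formula via a direct second-order eigenvalue expansion rather than quoting it. The structural ingredients you identify are the right ones (block-wise spectral perturbation, the resolvent $(\mu_l I-X)^\dagger$ producing the Daleckii--Krein coefficients, and the symmetry of $\theta$ absorbing the within-block permutations). However, the clean pointwise identity $\Delta_t^2 g(X,Y)(H)=\Delta_t^2\theta(\lambda(X),\lambda(Y))(\nu(H))+\Upsilon_{X,Y}(H)+o(1)$ is too optimistic as stated: the second-order term in $\lambda(X+tH)$ does not simply separate from $\theta$ this way, because $\theta$ is only piecewise linear and the relevant active face may change along the path. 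The cited references handle this through parabolic regularity and a chain rule for parabolic epi-derivatives, which is precisely the machinery needed to make your ``$o(1)$'' uniform enough for the epi-limit and to justify the limsup half of twice epi-differentiability. Your sketch acknowledges this obstacle but does not resolve it; what you have is the heuristic underlying \cite{MSarabi23,cuiding}, not an independent proof.

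For (b) your argument is essentially the paper's, with one minor mis-citation: Proposition~\ref{quadform} asserts the equivalence between $\d^2 f(\ox,\ov)$ being a GQF and $D(\sub f)(\ox,\ov)$ being generalized linear, not ``GQF $\Leftrightarrow$ domain is a subspace''. The simpler argument, which the paper uses, reads directly off \eqref{second_sub}: since $\Upsilon_{X,Y}$ is an everywhere-finite quadratic form, $\d^2 g(X,Y)=\Upsilon_{X,Y}+\dd_{{\cal C}_g(X,Y)}$ is a GQF if and only if ${\cal C}_g(X,Y)$ is a linear subspace, and $N_{\sub g(X)}(Y)$ is a subspace exactly when $Y\in\ri\sub g(X)$.
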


\begin{proof} The first claim in (a) was taken from \cite[Corollary 5.9]{MSarabi23}; see also  \cite[Propositions 6 and 10]{cuiding} for a similar result. 
The first claim in (b) results from \cite[Theorem~8.30]{rw}.
To prove the second claim in (b), we deduce from  \eqref{second_sub} that  the second subderivative $\d^2g(X,Y)$ is a GQF
 if and only if ${\cal C}_g(X,Y)$ is a linear subspace of $\S^n$. It follows from    ${\cal C}_g(X,Y)=N_{\sub g(X)}(Y)$ 
 that the latter property amounts to  $Y\in \ri \sub g(X)$.
\end{proof}

Recall that according to \cite[Theorem 2.49]{rw}, the polyhedral function $\th$ has a representation of the form 
\begin{equation}\label{eq:def_phi12}
\begin{cases}
\th=\theta_1+\theta_2\;\;\mbox{with}\\
\th_1(x):=\max\limits_{1\leq \nu\leq p}\{\langle {a}^\nu, {x}\rangle-c_\nu\}\;\;\mbox{and }\;\; \theta_2(x):=\delta_{{\rm dom}\,\theta}(x),    
\end{cases}
\end{equation}
where $\{({a}^\nu, c_\nu)\}_{\nu=1}^p\subset \R^n\times\R$ for a positive integer $p$ and where
\begin{equation*}\label{eq:def_psi}
{\rm dom}\,\th=\{{ x}\in\R^n \mid \psi({x}):=\max\limits_{1\leq \mu\leq q}\{\langle {b}^\mu, {x}\rangle-d_\mu\}\leq0\}
\end{equation*}
for some $\{({b}^\mu, d_\mu)\}_{\mu=1}^u\subset (\R^n\times\R)\setminus \{(0,0)\}$ with a positive integer $u$. It is important 
to emphasize here that we demand each pair $({b}^\mu, d_\mu)\neq (0,0)$, since such a case can be simply dismissed with no harm in the representation of $\dom \th$. The case  $\th_1=0$ can be covered by letting $p=1$ and $(a^1,c_1)=(0,0)\in \R^n\times \R$. Similarly, $\th_2=0$ can be covered by setting $u=1$ and $(b^1,d_1)=(0,1)\in \R^n\times \R$.
For each $\nu\in\{1,\ldots,p\}$, define $\mathcal{D}_\nu=\{{x}\in\R^n\mid \langle {a}^j,{x}\rangle-c_j\leq\langle {a}^\nu,{x}\rangle-c_\nu, \  j=1,\dots,p\}$. 
Take  ${x}\in\dom \th$ and define the active index sets  
\begin{equation*}\label{eq:iota1}
\iota_1({x}):=\{1\leq \nu\leq p \mid {x}\in\mathcal{D}_\nu\}\quad \mbox{and}\quad \iota_2({x}):=\{1\leq \mu\leq q \mid \langle {b}^\mu,{x}\rangle-d_\mu=0\}.
\end{equation*}
Using these index sets, one can express the subdifferential of $\th_1$
and $\th_2$ from \eqref{eq:def_phi12}, respectively,  by 
\begin{equation}
\partial \th_1({x})={\rm conv}\,\big\{{a}^\nu|\;\nu\in\iota_1({x})\big\}\quad {\rm and} \quad \sub \th_2(x)={N}_{\ss\dom \th}({x})={\rm cone}\,\{{b}^\mu|\;\mu\in\iota_2({x})\}.\label{partialdiffphi}
\end{equation}

The following lemma gives a simple representation of the critical cone of $\th$ for an important case that will be used extensively in the proofs of Theorem~\ref{prop:minquadbdcha} and Proposition~\ref{quad_minimal}.  
\begin{Lemma} \label{crit_gqf} Assume that $\th:\R^n\to\oR$ is a polyhedral function and $y\in \ri \sub \th(x)$. Then the critical cone of $\th$ at $x$
for $y$ has a representation of the form 
$$
{\cal C}_{\theta}(x,y)=\left\{w\in\R^n\Bigg | \begin{array}{l}
\langle d,{a}^{\nu'}-{a}^\nu\rangle=0\;\;\;\mbox{for all}\;\;\;\nu,\nu'\in{\iota}_1(x),\\
\langle w,{b}^\mu\rangle=0\;\;\;\mbox{for all}\;\;\;\mu\in{\iota}_2(x)\\
\end{array}\right\}.
$$
    
\end{Lemma}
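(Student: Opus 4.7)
The plan is to reduce the characterization of ${\cal C}_\theta(x,y)$ to describing the normal cone to $\sub \theta(x)$ at its relative-interior point $y$, and then to exploit the explicit Minkowski-sum structure of $\sub\theta(x)$ provided by the decomposition $\theta=\theta_1+\theta_2$. Since $\theta$ is polyhedral, hence convex and lsc, \cite[Theorem~8.30]{rw} gives the subderivative representation $\d\theta(x)(w)=\sup_{y'\in\sub\theta(x)}\la y',w\ra$, and thus by the definition of the critical cone
$$
{\cal C}_\theta(x,y) \;=\; \{w\mid y\ \text{maximizes}\ \la\cdot,w\ra\ \text{on}\ \sub\theta(x)\}\;=\;N_{\sub\theta(x)}(y).
$$

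The Moreau--Rockafellar sum rule applies without qualification here because both $\theta_1$ and $\theta_2$ are polyhedral and $\dom\theta_1=\R^n$, so $\sub\theta(x)=\sub\theta_1(x)+\sub\theta_2(x)=A(x)+K(x)$ with $A(x):=\conv\{a^\nu\mid\nu\in\iota_1(x)\}$ and $K(x):=\cone\{b^\mu\mid\mu\in\iota_2(x)\}$. The hypothesis $y\in\ri\sub\theta(x)$ is essential: for any convex set $C$ and any $y\in\ri C$ one has the classical identity
$$
N_C(y)\;=\;(\aff C-y)^\perp,
$$
which follows from the usual line-extension argument available only at a relative-interior point. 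Applying this with $C=A(x)+K(x)$ and splitting $y=a+k$ gives $\aff(A(x)+K(x))-y=(\aff A(x)-a)+(\aff K(x)-k)$; here $\aff A(x)-a$ is the linear subspace $\mathrm{span}\{a^{\nu'}-a^\nu\mid\nu,\nu'\in\iota_1(x)\}$ and, because $0\in K(x)$, $\aff K(x)$ is already the subspace $\mathrm{span}\{b^\mu\mid\mu\in\iota_2(x)\}$. Taking orthogonal complements via $(U+V)^\perp=U^\perp\cap V^\perp$ reproduces exactly the pair of orthogonality conditions asserted in the lemma.

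I do not anticipate a serious obstacle; the main care needed is in verifying that the sum rules for subdifferentials, affine hulls, and relative interiors all apply without extra qualifications, which polyhedrality of $\theta_1$ and $\theta_2$ guarantees. If one preferred not to invoke the identity $N_C(y)=(\aff C-y)^\perp$ directly, an equivalent route is to decompose $y=y_1+y_2$ with $y_1\in\ri A(x)$ and $y_2\in\ri K(x)$ and argue that $w\in{\cal C}_\theta(x,y)$ forces $\la a^\nu,w\ra$ to be constant on $\iota_1(x)$ (because $y_1$ is a convex combination assigning strictly positive weight to every vertex $a^\nu$) together with $\la b^\mu,w\ra=0$ on $\iota_2(x)$ (by the analogous argument on the cone $K(x)$). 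Either route lands on the same characterization.
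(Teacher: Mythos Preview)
Your proposal is correct and follows essentially the same approach as the paper: both identify ${\cal C}_\theta(x,y)=N_{\sub\theta(x)}(y)$, use the relative-interior hypothesis to reduce this to $(\para\{\sub\theta(x)\})^\perp$, compute the parallel subspace as the sum $\span\{a^{\nu'}-a^\nu\}+\span\{b^\mu\}$, and take orthogonal complements. The paper phrases the last step as an application of the Farkas Lemma from \cite[Lemma~6.45]{rw}, whereas you write it as $(U+V)^\perp=U^\perp\cap V^\perp$; these are equivalent here.
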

\begin{proof} Since $y\in \ri \sub \th(x)$ and since ${\cal C}_{\theta}(x,y)=N_{\sub \th(x)}(y)$, it is not hard to see that ${\cal C}_{\theta}(x,y)=\para \{\sub \th(x)\}^\perp$, where $\para\{\sub \th(x)\}$ stands for the subspace parallel to the affine hull of $\sub \th(x)$. Since 
$$
\para \{\sub \th(x)\}=\span\big\{{a}^{\nu'}-{a}^\nu|\; \nu,\nu'\in{\iota}_1(x)\big\}+\span\big\{{b}^\mu|\; \mu\in{\iota}_2(x)\big\},
$$
where $\span\big\{{b}^\mu|\; \mu\in{\iota}_2(x)\big\}$ signifies the subspace generated by the set $\big\{{b}^\mu|\; \mu\in{\iota}_2(x)\big\}$, one can obtain the claimed formula for the critical cone ${\cal C}_{\theta}(x,y)$ by applying the Farkas Lemma from \cite[Lemma~6.45]{rw}. 
\end{proof}
For  the polyhedral function $\th$   from \eqref{eq:def_phi12}, suppose $(x,{y})\in \gph\partial \th$. We assume in what follows that 
$y\notin \sub\th_1(x)$ and $y\notin\sub \th_2(x)$ and  
 define the index set of positive coefficients in a representation of a subgradient by 
\begin{equation}\label{eq:eta}
\eta({x},y):=
\left\{(\nu,\mu)\in \iota_1({x})\times\iota_2({x})\Bigg | \begin{array}{l}
 \displaystyle\sum_{\nu\in \iota_1({x})}u_\nu{a}^\nu+\sum_{\mu\in \iota_2({x})}v_\mu{b}^\mu={y},\\
\displaystyle\sum_{\nu\in \iota_1({x})}u_\nu=1,\ 0<u_\nu\leq1,\  v_\mu>0\\
\end{array}\right\}.
\end{equation}
When either $y\in\sub \th_1(x)$ or $y\in\sub \th_2(x)$ holds, our approach can be significantly simplified by dropping the indices related to the part of $\th$ that is zero and so we will  proceed to analyze the main case that both $\th_1$ and $\th_2$ are present in our analysis of the quadratic bundles of the spectral function $g$ in
\eqref{spec}.
The index set in \eqref{eq:eta}  plays a major role in various proofs in this section and allows us to provide a simple description of the critical cone of spectral functions. We begin with the following result, which is a direct consequence of \cite[Proposition 1]{cuiding}.
\begin{Proposition}\label{cor:inv} Assume that the polyhedral function $\th$ has the representation in \eqref{eq:def_phi12} and that $(x,{y})\in \gph\partial \th$. Then, the following two statements hold.
\begin{itemize}[noitemsep]
\item [\rm{(a)}] For any $\nu\in \iota_1(x)$, $\mu\in\iota_2(x)$ and  $Q\in\mathcal{P}^n_{x}$ (i.e., $Q\in\mathcal{P}^n$ and $Qx=x$),  there exist  $\nu'\in  \iota_1(x)$ and $\mu'\in  \iota_2(x)$ such that ${a}^{\nu'}=Q{a}^\nu$ and ${b}^{\mu'}=Q{b}^\mu$, respectively. 
\item [\rm{(b)}] 
	 For any $(\nu,\mu)\in \eta(x,y)$ and $Q\in\mathcal{P}^n_{x}\cap\mathcal{P}^n_{y}$,   there exist  $(\nu',\mu')\in  \eta(x, y)$ such that ${a}^{\nu'}=Q\,{a}^\nu$ and ${b}^{\mu'}=Q\,{b}^\mu$, respectively. 
    \end{itemize}
\end{Proposition}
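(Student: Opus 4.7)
My plan is to leverage the symmetry of $\th$ together with the invariance conditions $Qx=x$ (and, for part (b), also $Qy=y$). The first order of business is to observe that the data of the representation \eqref{eq:def_phi12} may be arranged so that the $\mathcal{P}^n$-orbit of each pair $(a^\nu,c_\nu)$ (respectively $(b^\mu,d_\mu)$) remains inside the corresponding data set; that is, for every $Q\in\mathcal{P}^n$ and every index $\nu$ there exists $\nu'$ with $(a^{\nu'},c_{\nu'})=(Qa^\nu,c_\nu)$, and similarly for the $b$-data. This orbit-closure at the level of the representation is precisely the content of \cite[Proposition~1]{cuiding}, which I would import wholesale rather than reprove here.

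With this orbit-closure in hand, part (a) becomes essentially a one-line verification. Given $Q\in\mathcal{P}^n_x$ and $\nu\in\iota_1(x)$, the orbit-closure supplies an index $\nu'$ with $a^{\nu'}=Qa^\nu$ and $c_{\nu'}=c_\nu$, and membership in $\iota_1(x)$ then follows from
$$\langle a^{\nu'},x\rangle-c_{\nu'}=\langle a^\nu,Q^\top x\rangle-c_\nu=\langle a^\nu,x\rangle-c_\nu,$$
since $Q^\top x=Q^{-1}x=x$. The same substitution, applied to the description of $\dom\th$ through the active equalities $\langle b^\mu,x\rangle-d_\mu=0$ for $\mu\in\iota_2(x)$, produces the desired $\mu'\in\iota_2(x)$ with $b^{\mu'}=Qb^\mu$.

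For part (b), I would fix $(\nu,\mu)\in\eta(x,y)$ together with a realizing combination
$$\sum_{\tilde\nu\in\iota_1(x)} u_{\tilde\nu}\,a^{\tilde\nu}+\sum_{\tilde\mu\in\iota_2(x)} v_{\tilde\mu}\,b^{\tilde\mu}=y,\qquad \sum_{\tilde\nu} u_{\tilde\nu}=1,\ u_{\tilde\nu}>0,\ v_{\tilde\mu}>0,$$
in which the coefficients $u_\nu,v_\mu$ are strictly positive. Taking $Q\in\mathcal{P}^n_x\cap\mathcal{P}^n_y$ and invoking part (a), I would define self-maps $\sigma$ of $\iota_1(x)$ and $\tau$ of $\iota_2(x)$ by $a^{\sigma(\tilde\nu)}=Qa^{\tilde\nu}$ and $b^{\tau(\tilde\mu)}=Qb^{\tilde\mu}$. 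Applying $Q$ to both sides of the displayed identity and using $Qy=y$ then delivers
$$\sum_{\tilde\nu} u_{\tilde\nu}\,a^{\sigma(\tilde\nu)}+\sum_{\tilde\mu} v_{\tilde\mu}\,b^{\tau(\tilde\mu)}=y,$$
a valid representation of $y$ with the same positive coefficients, merely relabeled by $\sigma$ and $\tau$. Choosing $\nu':=\sigma(\nu)$ and $\mu':=\tau(\mu)$ yields $(\nu',\mu')\in\eta(x,y)$, completing the argument.

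The only step that actually requires genuine care is the initial orbit-closure of the data: a polyhedral representation of a symmetric function is not automatically symmetric at the level of the pairs $(a^\nu,c_\nu)$ and $(b^\mu,d_\mu)$, so one must argue that such a symmetric representation can always be selected (or, equivalently, that enlarging the data by its $\mathcal{P}^n$-orbit alters neither $\th_1$ nor $\dom\th$). This is precisely where \cite[Proposition~1]{cuiding} carries the weight; once it is granted, parts (a) and (b) reduce to direct bookkeeping with permutations.
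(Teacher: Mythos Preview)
Your proposal is correct and matches the paper's approach: the paper does not supply its own proof but simply declares the proposition a direct consequence of \cite[Proposition~1]{cuiding}, which is precisely the orbit-closure statement you import as the main input. Your additional bookkeeping for parts (a) and (b) is sound and merely spells out what the paper leaves implicit.
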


\begin{Remark}\label{part}{\rm In this section, we will be using multiple partitions of eigenvalues corresponding to a pair $(X,Y)\in \gph \sub g$. To facilitate the presentation and make it easier for the readers to follow our proofs, we briefly list them below. Figure~\ref{fig1} shows an example of all these different partitions for the $\al^l\times \al^l$ block. 
 \begin{figure}[t]
\includegraphics[width=8cm]{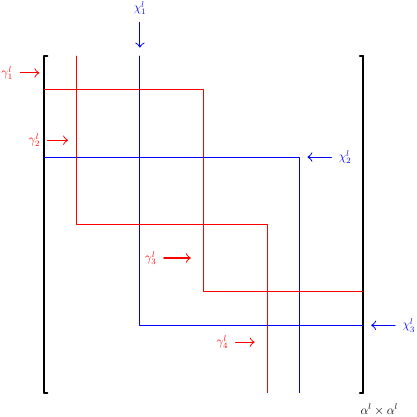}
\centering
\caption{Different partitions of eigenvalues}\label{fig1}
\end{figure}
\begin{itemize}[noitemsep]
\item [\bf{(P1)}] The index sets $\al^l$, $l\in \{1,\ldots,r\}$, from \eqref{eq:def-alpha}, where $r$ is the number of distinct eigenvalues of $X$.

\item [\bf{(P2)}]  Suppose that $\{(X^k,Y^k)\b\subset \gph \sub g$ is a sequence, converging to $(X,Y)$. For any $l\in \{1,\ldots,r\}$, define the index sets $\{\chi_t^l\}_{t=1}^{z^l}$ with $z^l\in \N$ to further partition the set $\alpha^l$ based on $\lambda(X^k)$ as
\begin{equation}\label{eq:partonxk}
\begin{cases}
\lambda_i(X^k)=\lambda_j(X^k),&{\rm if}\; i,j\in\chi_t^l\;{\rm and}\;t\in\{1,\dots,z^l\},\\
\lambda_i(X^k)>\lambda_j(X^k),&{\rm if}\, i\in\chi_t^l, j\in\chi_{t'}^l\,{\rm and}\,t,t'\in\{1,\dots,z^l\}\,{\rm with}\,t<t'.
\end{cases}
\end{equation}
Note that $\{\chi_t^l\}_{t=1}^{z^l}$ depends on $k$. By taking a subsequence if necessary, we can assume that the index sets $\chi_t^l$ remain constant for all $k$. This is because $\cup_{l=1}^r\cup_{t=1}^{z^l}\chi_t^l$ can be regarded as a partition of $\{1,\dots,n\}$ and the total number of different partitions of $\{1,\dots,n\}$ is finite. Thus, we can take a subsequnce of $\{(X^k,Y^k)\b$ to ensure that the partition  $\{\chi_t^l\}$ for each $(X^k,Y^k)$ in the subsequence remains the same.

\item [\bf{(P3)}]For any $l\in \{1,\ldots,r\}$ and  $\alpha^l$ from  \eqref{eq:def-alpha}, we define the index sets $\{\gamma_s^l\}_{s=1}^{u^l}$  with $u^l\in \N$  to further partition the index set $\alpha^l$  based on $\lambda(Y)$ as
\begin{equation}\label{eq:partonY}
\begin{cases}
\lambda_i(Y)=\lambda_j(Y),&{\rm if}\; i,j\in\gamma_s^l\;{\rm and}\;s\in\{1,\dots,u^l\},\\
\lambda_i(Y)>\lambda_j(Y),&{\rm if}\, i\in\gamma_s^l, j\in\gamma_{s'}^l\,{\rm and}\,s,s'\in\{1,\dots,u^l\}\,{\rm with}\,s<s'.
\end{cases}
\end{equation}
 \end{itemize}
}    
\end{Remark}

The following lemma provides necessary technical tools for the characterization of critical cone and  the caluculation of quadratic bundle of spectral functions. To present it, suppose that $({X}, {Y}) \in \operatorname{gph} \partial g$ and $X$ has the eigenvalue decomposition \eqref{eq:eig-decomp1} with $P\in {\bf O}^n({X}) \cap {\bf O}^n({Y})$. 
Denote 
\begin{equation}\label{eq:Eldiff}
{\cal E}^l:=\left\{s\in\{1,\dots,u^l\}\Bigg | \begin{array}{l}
\exists\, i, j \in \gamma_s^l \text { such that }\left({a}^\nu\right)_i \neq\left({a}^{\nu}\right)_j {\rm or} \left({b}^{\mu}\right)_i \neq\left({b}^\mu\right)_j \\[3pt]
\text {for some } (\nu,\mu)\in \eta(\lambda({X}), \lambda({Y}))
\end{array}  \right\}
\end{equation}

\begin{Lemma}\label{lem:wIdforg}
Suppose that $({X}, {Y}) \in \operatorname{gph} \partial g$ and $X$ has the eigenvalue decomposition \eqref{eq:eig-decomp1} with $P\in {\bf O}^n({X}) \cap {\bf O}^n({Y})$.   For any $w\in\R^n$, satisfying  for any $(\nu,\mu)$ and $(\nu',\mu')\in\eta(\lambda({X}), \lambda({Y}))$,
$$
\langle w,{a}^\nu-{a}^{\nu'}\rangle=0 \quad {\rm and} \quad \langle w,{b}^\mu\rangle=0,
$$
there exists $\rho_s^l$ such that for all $s\in{\cal E}^l$, we have
\begin{equation*}
w_{\gamma_s^l}=\rho^l_s{\bf 1}_{|\gamma_s^l|},
\end{equation*} 
where ${\bf 1}$ denotes the vector whose components are all 1.
\end{Lemma}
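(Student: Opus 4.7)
The plan is to exploit the permutation equivariance of $\eta(\lambda(X),\lambda(Y))$ provided by Proposition~\ref{cor:inv}(b). Given any $Q\in {\cal P}^n_{\lambda(X)} \cap {\cal P}^n_{\lambda(Y)}$ and any $(\nu,\mu)\in \eta(\lambda(X),\lambda(Y))$, that proposition yields $(\nu',\mu')\in \eta(\lambda(X),\lambda(Y))$ with $a^{\nu'}=Q a^\nu$ and $b^{\mu'}=Q b^\mu$. Substituting these into the hypotheses $\la w,a^\nu-a^{\nu'}\ra=0$ and $\la w,b^\mu\ra=\la w,b^{\mu'}\ra=0$, I would deduce
\begin{equation*}
\la w,(I-Q)a^\nu\ra=0 \quad \text{and} \quad \la w,(I-Q)b^\mu\ra=0
\end{equation*}
for every such $Q$ and every $(\nu,\mu)\in \eta(\lambda(X),\lambda(Y))$. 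This rewrites the (combinatorial) assumption on $w$ as an infinitesimal invariance statement under the joint stabilizer of $\lambda(X)$ and $\lambda(Y)$.

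Next, I would fix $s\in {\cal E}^l$ together with two distinct indices $i,j\in\gamma_s^l$, and specialize the identities above to the transposition $Q=Q_{ij}$. Since $\gamma_s^l$ is a block of the common refinement of the partitions of $\{1,\dots,n\}$ induced by $\lambda(X)$ and $\lambda(Y)$, the transposition $Q_{ij}$ indeed belongs to ${\cal P}^n_{\lambda(X)} \cap {\cal P}^n_{\lambda(Y)}$. The vectors $(I-Q_{ij})a^\nu$ and $(I-Q_{ij})b^\mu$ are then supported on $\{i,j\}$, and the identities collapse to
\begin{equation*}
(w_i-w_j)\bigl((a^\nu)_i-(a^\nu)_j\bigr)=0, \qquad (w_i-w_j)\bigl((b^\mu)_i-(b^\mu)_j\bigr)=0
\end{equation*}
for every $(\nu,\mu)\in \eta(\lambda(X),\lambda(Y))$. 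Thus $w_i=w_j$ would follow as soon as I can exhibit some $(\nu,\mu)\in \eta(\lambda(X),\lambda(Y))$ for which either $(a^\nu)_i\ne (a^\nu)_j$ or $(b^\mu)_i\ne (b^\mu)_j$.

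The remaining step is to transport the single separating witness guaranteed by \eqref{eq:Eldiff} to every pair in $\gamma_s^l$. Let $i_0,j_0\in\gamma_s^l$ and $(\nu_0,\mu_0)\in\eta(\lambda(X),\lambda(Y))$ realize the separation required by the definition of ${\cal E}^l$. For an arbitrary pair $i\ne j$ in $\gamma_s^l$, I would pick a permutation $\widetilde Q$ acting only on $\gamma_s^l$ with $\widetilde Q(i_0)=i$ and $\widetilde Q(j_0)=j$; such a $\widetilde Q$ exists because $\gamma_s^l$ is a common block of both partitions, and it therefore lies in ${\cal P}^n_{\lambda(X)} \cap {\cal P}^n_{\lambda(Y)}$. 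Proposition~\ref{cor:inv}(b) then produces $(\nu_1,\mu_1)\in\eta(\lambda(X),\lambda(Y))$ with $a^{\nu_1}=\widetilde Q a^{\nu_0}$ and $b^{\mu_1}=\widetilde Q b^{\mu_0}$, so that $(a^{\nu_1})_i-(a^{\nu_1})_j=(a^{\nu_0})_{i_0}-(a^{\nu_0})_{j_0}$ and analogously for $b^{\mu_1}$. Hence whichever of the two differences was nonzero at $(i_0,j_0)$ remains nonzero at $(i,j)$ through $(\nu_1,\mu_1)$, and the preceding paragraph forces $w_i=w_j$. Since $i,j\in\gamma_s^l$ were arbitrary, $w$ is constant on $\gamma_s^l$ and the claim follows with $\rho_s^l$ equal to this common value. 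The only delicate point I anticipate is the verification that $Q_{ij}$ and $\widetilde Q$ actually preserve both $\lambda(X)$ and $\lambda(Y)$, which is immediate from the common-refinement construction of $\gamma_s^l$; the dichotomy in the ``or'' appearing in \eqref{eq:Eldiff} then splits the final step into two symmetric cases requiring no further ideas.
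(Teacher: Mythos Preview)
Your proposal is correct and follows essentially the same approach as the paper's proof: both hinge on applying Proposition~\ref{cor:inv}(b) to permutations in ${\cal P}^n_{\lambda(X)}\cap{\cal P}^n_{\lambda(Y)}$ (transpositions in particular) to produce a companion index $(\nu',\mu')\in\eta(\lambda(X),\lambda(Y))$ and then read off $(w_i-w_j)\bigl((a^\nu)_i-(a^\nu)_j\bigr)=0$ from the hypothesis. The only cosmetic difference is in the propagation step---the paper fixes the single witness vector $a^t$ and handles a third index $i'$ by a two-case analysis (comparing $i'$ to $i$ or to $j$ according to whether $(a^t)_{i'}=(a^t)_i$), whereas you transport the witnessing pair $(i_0,j_0)$ to an arbitrary pair $(i,j)$ via a permutation $\widetilde Q$ supported on $\gamma_s^l$; both arguments are equally short and rest on the same mechanism.
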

\begin{proof}
While the proof is similar to that of \cite[Proposition 4 (iii)]{cuiding}, we provide a proof for the readers' convenience.  We proceed with the  proof of  the case where $y\notin\partial\theta_1(\lambda(X))$ and $y\notin\partial\theta_2(\lambda(X))$. The other cases $y\in\partial\theta_1(\lambda(X))$ or $y\in\partial\theta_2(\lambda(X))$ can be argued similarly. 
Recall \eqref{eq:partonY}.  For each $l\in\{1,\ldots,r\}$, if $s\in {\cal E}^l$, suppose the first case that there exist $i,j\in \gamma^l_{s}$ such that $({a}^{t})_i\neq({a}^{t})_j$ for some $(t,k)\in{\eta}(\lambda(X),\lambda(Y))$. 
Consider the $n\times n$ permutation matrix $Q^{i,j}$  satisfying
	\[
	(Q^{i,j}{a}^{t})_z=\left\{\begin{array}{ll}
	({a}^{t})_j & \mbox{if $z=i$,} \\ [3pt]
	({a}^{t})_i & \mbox{if $z=j$,} \\ [3pt]
	({a}^{t})_z & \mbox{otherwise,}
	\end{array} \right. \quad z=1,\ldots,n.
	\]  
        
	Since $\lambda_i({X})=\lambda_j({X})$ and $\lambda_i({Y})=\lambda_j(Y)$, it is clear that $Q^{i,j}\lambda({X})=\lambda({X})$ and $Q^{i,j}\lambda({Y})=\lambda({Y})$. 
    It then follows from the condition and Proposition \ref{cor:inv} that there exists $(t',k')\in \eta(\lambda(X),\lambda(Y))$ such that ${a}^{t'}=Q^{i,j}{a}^{t}$. Therefore, we have  
	\[
	\langle w,{a}^{t}-{a}^{t'}\rangle=\langle w,{a}^{t}-{a}^{t'}\rangle=(w_i-w_j)(({a}^{t})_i-({a}^{t})_j)=0,
	\] 
	which implies that $
	w_i=w_j.
	$
	For any $i'\in \gamma^l_{s}$ with $i'\neq i$ and $i'\neq j$, if $({a}^{t})_{i'}\neq({a}^{t})_i$, by replacing $i$ by $i'$ and $j$ by $i$ in the above argument, we obtain   
	$$
	w_{i'}=w_i=w_j;
	$$
	otherwise if $({a}^{t})_{i'}=({a}^{t})_i$,  then by replacing $i$ by $i'$ in the above argument, we can also obtain the above equality.
	Consequently, we know that for any $s\in {\cal E}^l$, there exists some $\rho^l_{s}\in\R$ such that for any $i\in \gamma^l_{s}$,
	\[
	w_i=\rho^l_{s}. 
	\]
 The other case where $({b}^{k})_i\neq({b}^{k})_j$ can be obtained similarly. Thus, the desired result has been verified. 
\end{proof}

It is well-known (see e.g.,  \cite[Theorem 7]{Lancaster} and \cite[Proposition 1.4]{Torki}) that the eigenvalue function $\lambda$ is directional differentiable everywhere and for any $X\in\S^n$, the directional derivative $\lambda'(X,H)$ at $X$ along the direction $H\in\S^n$ is given by
\begin{equation}\label{eq:egdd}\lambda'(X,H)=\big(\lambda(P^{\top}_{\alpha^1}HP_{\alpha^1}),\dots,\lambda(P^{\top}_{\alpha^r}HP_{\alpha^r})\big)^{\top}, \end{equation}
where $P\in{\bf O}^n(X)$ and the index sets $\alpha^l$, $l=1,\ldots,r$ are given by \eqref{eq:def-alpha}. 
The following lemma provides a complete characterization of the critical cone of $g$, which is inspired by  \cite[Propositions 4 and 8]{cuiding}, where a similar result was achieved by splitting the polyhedral function $\th$ into two parts. Below, we show that it can be done without such a decomposition. 

\begin{Lemma}\label{lemma:characofcc}
Suppose that $({X}, {Y}) \in \operatorname{gph} \partial g$ and $X$ has the eigenvalue decomposition \eqref{eq:eig-decomp1} with $P\in {\bf O}^n({X}) \cap {\bf O}^n({Y})$. 
 If $H \in \mathcal{C}_g\left({X},{Y}\right)$, then the following properties (a)-(c) hold.
 \begin{itemize}
\item[\rm(a)] For each $l \in\{1, \ldots, r\}$, $P_{\alpha^l}^{\top} H P_{\alpha^l}$ has the following block diagonal structure:
$$
P_{\alpha^l}^{\top} H P_{\alpha^l}=\operatorname{Diag}\big((P^{\top} H P)_{\gamma_1^l \gamma_1^l}, \ldots,(P^{\top} H P)_{\gamma_{u^l}^l \gamma_{u^l}^l}\big).
$$
\item[\rm(b)]  For any $(\nu,\mu)$ and $(\nu',\mu')\in\eta(\lambda({X}), \lambda({Y}))$, $$\left\langle\lambda'({X} , H), {a}^\nu\right\rangle=\left\langle\lambda'({X} , H), {a}^{\nu'}\right\rangle=\max _{\nu \in \iota_1(\lambda({X}))}\left\langle\lambda'({X} , H), {a}^\nu\right\rangle;$$
 $$\left\langle\lambda'({X} , H), {b}^\mu\right\rangle=\left\langle\lambda'({X} , H), {b}^{\mu'}\right\rangle=\max _{\mu \in \iota_2(\lambda({X}))}\left\langle\lambda'({X} , H), {b}^\mu\right\rangle=0. $$
\item[\rm(c)] For each $l \in\{1, \ldots, r\}$ and $k \in \mathcal{E}^l$, there exists a scalar $\rho_k^l$ such that $\left(P^{\top} H P\right)_{\gamma_k^l \gamma_k^l}=\rho_k^l I_{\left|\gamma_k^l\right|}$. 
\end{itemize}
Consequently, 
we can conclude that  $H \in \mathcal{C}_g\left({X},{Y}\right)$ if and only if for any $(\nu,\mu)$ and $(\nu',\mu')\in\eta(\lambda({X}), \lambda({Y}))$,
$$
\langle\operatorname{diag}(P^{\top} H P), {a}^\nu\rangle=\langle\operatorname{diag}(P^{\top} H P), {a}^{\nu'}\rangle=\max _{\nu \in \iota_1(\lambda({X}))}\langle\lambda'({X} ; H), {a}^\nu\rangle
$$
and
$$\langle\operatorname{diag}(P^{\top} H P), {b}^\mu\rangle=\langle\operatorname{diag}(P^{\top} H P), {b}^{\mu'}\rangle=\max _{\mu \in \iota_2(\lambda({X}))}\langle\lambda'({X} ; H), {b}^\mu\rangle=0. $$

\end{Lemma}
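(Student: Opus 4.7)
The plan is to reduce the critical cone condition $\la Y, H\ra = \d g(X)(H)$ to two sharp inequalities whose equality cases encode precisely (a)--(c). First I would invoke the chain rule for the convex spectral function $g=\th\circ\lm$, giving
$$
\d g(X)(H)=\d\th(\lm(X))(\lm'(X,H)),
$$
together with the identity $\la Y,H\ra =\sum_{l=1}^{r}\la \lm(Y)_{\al^l},\diag(P_{\al^l}^{\top} H P_{\al^l})\ra$ obtained by expanding in the shared eigenbasis $P\in{\bf O}^n(X)\cap {\bf O}^n(Y)$. Two sharp inequalities are then available: the block-wise rearrangement inequality
$$
\la \lm(Y)_{\al^l},\diag(P_{\al^l}^{\top} H P_{\al^l})\ra \le \la \lm(Y)_{\al^l},\lm(P_{\al^l}^{\top} H P_{\al^l})\ra,
$$
valid since $\lm(Y)_{\al^l}$ is nonincreasing, and, after summing over $l$ and using \eqref{eq:egdd}, the subgradient inequality $\la \lm(Y),\lm'(X,H)\ra\le \d\th(\lm(X))(\lm'(X,H))$ coming from $\lm(Y)\in\sub\th(\lm(X))$.

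Assuming $H\in\mathcal{C}_g(X,Y)$, equality must hold throughout. Equality in the block-wise rearrangement, applied inside each $\gamma_s^l$ on which $\lm(Y)$ is constant, forces the off-diagonal pieces of $P_{\al^l}^{\top} H P_{\al^l}$ with respect to the partition $\{\gamma_s^l\}_{s=1}^{u^l}$ to vanish, yielding the block-diagonal structure (a). Equality in the subgradient inequality places $\lm'(X,H)$ inside the polyhedral critical cone $\mathcal{C}_\th(\lm(X),\lm(Y))$; its description in terms of $\iota_1,\iota_2$ and the set $\eta(\lm(X),\lm(Y))$ in \eqref{eq:eta} yields (b). Part (c) then follows by applying Lemma~\ref{lem:wIdforg} to $w:=\lm'(X,H)$: the resulting constancy of $(\lm'(X,H))_{\gamma_s^l}$ for $s\in\mathcal{E}^l$, combined with (a) (which makes $(P^{\top} H P)_{\gamma_s^l\gamma_s^l}$ symmetric with spectrum equal to $(\lm'(X,H))_{\gamma_s^l}$), forces this block to equal $\rho_s^l I_{|\gamma_s^l|}$.

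For the concluding equivalence I would show that, under (a) and (c), the pairings $\la a^\nu,\lm'(X,H)\ra$ and $\la a^\nu,\diag(P^{\top} H P)\ra$ agree, and likewise for $b^\mu$: on blocks $\gamma_s^l$ with $s\notin\mathcal{E}^l$, both $a^\nu$ and $b^\mu$ are constant on $\gamma_s^l$ by the very definition of $\mathcal{E}^l$, so only $\sum_{i\in\gamma_s^l}(P^{\top} H P)_{ii}=\sum_{i\in\gamma_s^l}\lm_i(P_{\al^l}^{\top} H P_{\al^l})$ matters; on blocks with $s\in\mathcal{E}^l$, the scalar-multiple-of-identity structure produced by (c) makes $\lm(\cdot)$ and $\diag(\cdot)$ of the block coincide. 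This identification translates the characterization of $\mathcal{C}_\th(\lm(X),\lm(Y))$ into the asserted diagonal conditions; the converse direction reassembles equality in both sharp inequalities from these diagonal conditions and recovers $\la Y,H\ra=\d g(X)(H)$. I expect the main obstacle to be the careful bookkeeping needed to track the two distinct rearrangement-type equalities simultaneously and to pass cleanly between the $\lm'(X,H)$-based and $\diag(P^{\top} H P)$-based descriptions of the critical cone.
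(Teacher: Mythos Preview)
Your proposal is correct and follows essentially the same route that the paper invokes through the cited references (Remark~\ref{crit_charc} records the characterization from \cite[Proposition~5.4]{MSarabi23}, and Remark~\ref{remark:invariant-y1-y2} does the translation between (b) and the inclusion $\lambda'(X,H)\in\mathcal{C}_\th(\lambda(X),\lambda(Y))$). One small imprecision: in your derivation of (c) you appeal to ``(a)'' to identify the spectrum of $(P^\top H P)_{\gamma_s^l\gamma_s^l}$ with $(\lambda'(X,H))_{\gamma_s^l}$, but the block-diagonal structure alone does not pin down which block contributes which eigenvalues; you actually need the full Fan equality (equivalently, the simultaneous ordered spectral decomposition, i.e.\ condition (ii)$'$ in Remark~\ref{crit_charc} including \eqref{crit_equi2}), which your rearrangement step already gives you.
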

\begin{proof}
The proofs of (a)-(b) and the claimed characterization of $\mathcal{C}_g\left({X},{Y}\right)$ are similar to that of \cite[Proposition 4]{cuiding} combined with \cite[Proposition 3.2]{MSarabi18} and \cite[Proposition 5.4]{MSarabi23}. Part (c) follows from Lemma \ref{lem:wIdforg}. 
\end{proof}

In what follows, for any vector $x\in \R^n$, denote by $x^{\dn}$ the vector with the same components permuted in nonincreasing order. 
Set $\R^n_{\dn}:=\{x^{\dn}|\; x\in \R^n\}$.

\begin{Remark}\label{crit_charc}{\rm
Although we often will use the characterization of the critical cone
of spectral functions of Lemma~\ref{lemma:characofcc}, it is important to remind the readers of another characterization, obtained recently in \cite[Proposition~5.4]{MSarabi23}. To do so, suppose $(X,Y)\in \gph \sub g$
with $g$ taken from \eqref{spec} and $P\in {\bf O}^n({X}) \cap {\bf O}^n({Y})$. Then $H\in {\cal C}_g(X,Y)$ if and only if the following properties are satisfied:
\begin{itemize}[noitemsep]
\item [\rm{(i)}]  $\lm'(X;H)\in {\cal C}_\th(\lm(X),\lm(Y))$;
 
\item [\rm{(ii)}] for any $l\in \{1,\ldots,r\}$, the matrices $\Lm(Y)_{\al^l\al^l}$ and $P_{\al^l}^\top HP_{\al^l}$ have a simultaneous ordered spectral decomposition, where the index set $\al^l$ is taken from \eqref{eq:def-alpha}. 
 \end{itemize}
 Observe that for any $l\in \{1,\ldots,r\}$, one can decompose the vector $\lm(P_{\al^l}^\top  H P_{\al^l})\in \R^{|\al^l|}_{\dn}$ as
\begin{equation}\label{crit_equi3}
\lm(P_{\al^l}^\top  H P_{\al^l})=\Big(\lm(P_{\al^l}^\top  H P_{\al^l})_{\gamma_1^l}, \ldots, \lm(P_{\al^l}^\top  H P_{\al^l})_{\gamma_{u^l}^l}\Big),    
\end{equation}
where $\lm(P_{\al^l}^\top  H P_{\al^l})_{\gamma_j^l}\in \R^{|\gamma_j^l|}_{\dn}$ for any $j\in \{1,\ldots,u^l\}$, and   
where $u^l$ and the index sets $\gamma_j^l$ are taken from \eqref{eq:partonY}. Bearing this in mind, 
note that condition (ii) above  is equivalent to the following statement:
\begin{itemize}[noitemsep] 
\item [\rm{(ii)'}] For any $l\in \{1,\ldots,r\}$, 
$P_{\alpha^l}^{\top} H P_{\alpha^l}$ has the  block diagonal representation 
$$
P_{\alpha^l}^{\top} H P_{\alpha^l}=\operatorname{Diag}\big((P^{\top} H P)_{\gamma_1^l \gamma_1^l}, \ldots,(P^{\top} H P)_{\gamma_{u^l}^l \gamma_{u^l}^l}\big)
$$
with  $(P^{\top} H P)_{\gamma_j^l \gamma_j^l}= Q^l_j\operatorname{Diag}\big(\lm(P_{\al^l}^\top  H P_{\al^l})_{\gamma_j^l}\big)  (Q^l_j)^\top$ for some orthogonal matrix  $Q^l_j\in  {\bf O}^{|\gamma_j^l|}$ whenever $j\in \{1,\ldots,u^l\}$. Moreover, 
\begin{equation}\label{crit_equi2}
    \lm_i(P_{\al^l}^\top  H P_{\al^l})\ge \lm_j(P_{\al^l}^\top  H P_{\al^l}), \;\;\mbox{if}\; i\in \gamma_s^l, j\in \gamma_{s'}^l\,{\rm and}\,s,s'\in\{1,\dots,u^l\}\,{\rm with}\,s<s'.
\end{equation}
 \end{itemize}

To prove this claim, suppose first that condition (ii) holds. 
 Given $l\in \{1,\ldots,r\}$, the matrices $\Lm(Y)_{\al^l\al^l}$ and $P_{\al^l}^\top HP_{\al^l}$ have a simultaneous ordered spectral decomposition. Thus,  
we find an orthogonal matrix $  Q^l\in  {\bf O}^{|\al^l|}$ such that 
\begin{equation}\label{spcr4}
\Lm(Y)_{\al^l\al^l}=  Q^l\Lm(Y)_{\al^l\al^l}  (Q^l)^\top\quad \mbox{and}\quad  P_{\al^l}^{\top} H P_{\al^l}=  Q^l\Lambda(P_{\al^l}^\top  H P_{\al^l})  (Q^l)^\top.
\end{equation}
It is not hard to see from the first equality above   that $Q^l$ has a block diagonal representation as 
$$
Q^l=\operatorname{Diag}(Q^l_1,\ldots, Q^l_{u^l})
\quad \mbox{with}\;\; Q^l_j\in  {\bf O}^{|\gamma_j^l|}, \;\; j\in \{1,\ldots,u^l\},
$$
where $u^l$ and the index sets $\gamma_j^l$ are taken from \eqref{eq:partonY}.
This, coupled with the second equality in \eqref{spcr4}, implies  that $P_{\al^l}^{\top} H P_{\al^l}$ has the claimed block diagonal representation. Observe also that \eqref{crit_equi2} results from the fact that $\Lambda(P_{\al^l}^\top  H P_{\al^l})=\operatorname{Diag}\big(\lm(P_{\al^l}^\top  H P_{\al^l})\big)$.

Assume now that (ii)' is satisfied. Given $l\in \{1,\ldots,r\}$ and the matrices $Q^l_j\in  {\bf O}^{|\gamma_j^l|}$ with $j\in \{1,\ldots,u^l\}$, set 
$Q^l:=\operatorname{Diag}(Q^l_1,\ldots, Q^l_{u^l})$.  Thus, we have 
$$
\Lm(Y)_{\al^l\al^l}=  Q^l\Lm(Y)_{\al^l\al^l}  (Q^l)^\top\quad \mbox{and}\quad  P_{\al^l}^{\top} H P_{\al^l}=  Q^l\operatorname{Diag}\Big(\lm(P_{\al^l}^\top  H P_{\al^l})_{\gamma_1^l}, \ldots, \lm(P_{\al^l}^\top  H P_{\al^l})_{\gamma_{u^l}^l}\Big)  (Q^l)^\top,
$$
where the first equality results from the fact that $\lambda_i(Y)=\lambda_j(Y)$,  if $i,j\in\gamma_s^l$  and $s\in\{1,\dots,u^l\}$, and where the second one follows from the assumed block diagonal representation of  $P_{\al^l}^\top HP_{\al^l}$. According to \eqref{crit_equi3}, the second equality above is the same as the second equality in \eqref{spcr4}. Therefore, we can conclude from \eqref{crit_equi2} that the matrices $\Lm(Y)_{\al^l\al^l}$ and $P_{\al^l}^\top HP_{\al^l}$ have a simultaneous ordered spectral decomposition, which proves (ii). 

}
\end{Remark}

\begin{Remark}\label{remark:invariant-y1-y2}{\rm
    By using \cite[Proposition~4.4]{MSarabi16} and \cite[Proposition~3.2 and page 612]{MSarabi18}, it can be checked directly that the property in Lemma \ref{lemma:characofcc}(b) is equivalent to that of Remark \ref{crit_charc}(i).  
To see this, denote $d=\lambda'(X,H)$. For any $d$, satisfying the property in Lemma \ref{lemma:characofcc}(b), $d\in{\cal C}_{\theta}(\lambda(X),\lambda(Y))$ holds trivially. 
Conversely, for all $d\in{\cal C}_{\theta}(\lambda(X),\lambda(Y))$, it follows from $\lm(Y)\in \sub\th (\lm(X))$ that for any representation $\lambda(Y)=y_1+y_2$ with $y_1\in\partial\theta_1(\lambda(X))$ and $y_2\in\partial\theta_2(\lambda(X))$, one has 
\begin{equation}\label{eq:C-theta-temp}
{\cal C}_{\theta}(\lambda(X),\lambda(Y))=
\begin{aligned}
\left\{d\Bigg |\left.
\begin{array}{l}
\langle d,{a}^{\nu'}-{a}^\nu\rangle=0, \quad \nu, \nu'\in\eta_1(\lambda(X),y_1) \\[3pt]
\langle d,{b}^\mu\rangle=0, \quad\quad\quad\;\mu\in\eta_2(\lambda(X),y_2)\\[3pt]
    \langle d,{a}^{\nu'}-{a}^\nu\rangle\leq0, \quad \nu'\in\eta_1(\lambda(X),y_1), \nu\in\iota_1(\lambda(X))\backslash\eta_1(\lambda(X),y_1)\\[3pt]
     \langle d,{b}^{\mu}\rangle\leq0, \quad\quad\quad\;\mu\in\iota_2(\lambda(X))\backslash\in\eta_2(\lambda(X),y_2) 
\end{array}\right.\right\}, 
\end{aligned}
\end{equation}
where $\eta_1(\lambda(X),y_1)=\big\{\nu\in\iota_1({x})\mid \sum_{\nu\in \iota_1({x})}u_\nu{a}^\nu={y}, \sum_{\nu\in \iota_1({x})}u_\nu=1,\ 0<u_\nu\leq1\big\}$ and $\eta_2(\lambda(X),y_2)=\big\{\mu\in\iota_2(x)\mid  \sum_{\mu\in \iota_2({x})}v_\mu{b}^\mu={y}_1, v_\mu>0\big\}$. 
Note that the above representation of ${\cal C}_{\theta}(\lambda(X),\lambda(Y))$  is independent from the decomposition of $\lambda(Y)=y_1+y_2$ according to \cite[Proposition~4.4]{MSarabi16} and \cite[Proposition~3.2]{MSarabi18} (see also \cite[page 612]{MSarabi18}). 
Denote \begin{equation}\label{eqM}{\cal M}=\{(y_1,y_2)\mid\lambda(Y)=y_1+y_2,\; y_1\in\partial\theta_1(\lambda(X)),\; y_2\in\partial\theta_2(\lambda(X))\}.\end{equation}  
Therefore, the property in Lemma \ref{lemma:characofcc}(b) holds if we show $d$ satisfies the conditions 
\begin{equation}\label{eq:Ccon}
\left\{\begin{array}{l}
\begin{array}{l}
\langle d,{a}^{\nu'}-{a}^\nu\rangle=0, \\[3pt]
\langle d,{b}^\mu\rangle=0,
\end{array}
 \ \ (\nu,\mu), (\nu',\mu')\in\eta(\lambda(X),\lambda(Y)),\\ [3pt]
\begin{array}{l}
    \langle d,{a}^{\nu'}-{a}^\nu\rangle\leq0, \\[3pt]
     \langle d,{b}^{\mu}\rangle\leq0, 
\end{array}
\ (\nu,\mu)\in\iota_1(\lambda(X))\times\iota_2(\lambda(X))\backslash\eta(\lambda(X),\lambda(Y)).\end{array}\right.
\end{equation}
Indeed, we know from \eqref{eq:C-theta-temp} that for all $(\nu,\mu), (\nu',\mu')\in\iota_1(\lambda(X))\times\iota_2(\lambda(X))$, 
$$\begin{array}{l}
\langle d,{a}^{{\nu}}-{a}^{\nu'}\rangle\leq0\quad\mbox{and}\quad
\langle d,{b}^{\mu}\rangle\leq0. 
\end{array}$$
It can be checked directly that 
$$\eta(\lambda(X),\lambda(Y))=\bigcup_{(y_1,y_2)\in{\cal M}}\eta_1(\lambda(X),y_1)\times \eta_2(\lambda(X),y_2).$$ 
Let $(\nu,\mu), (\nu',\mu')\in\eta(\lambda(X),\lambda(Y))$ be arbitrarily given. Then, there exists $(y_1,y_2)$ and $(y_1',y_2')\in{\cal M}$ such that $(\nu,\mu)\in \eta_1(\lambda(X),y_1)\times \eta_2(\lambda(X),y_2)$ and $(\nu',\mu')\in \eta_1(\lambda(X),y_1')\times \eta_2(\lambda(X),y_2')$. 
Let $\widetilde{y}_1=\ ({y_1+y_1'})/{2}$ and $\widetilde{y}_2=({y_2+y_2'})/{2}$. It is clear that $(\widetilde{y}_1,\widetilde{y}_2)\in{\cal M}$. 
We deduce from the definition of $\widetilde{y}_1$ that $$\eta_1(\lambda(X),\widetilde{y}_1)\supset\eta_1(\lambda(X),y_1)\cup\eta_1(\lambda(X),y_1').$$ It then follows from \cite[Proposition~4.4]{MSarabi16} and \eqref{eq:C-theta-temp} that $\langle d,{a}^{\nu'}-{a}^\nu\rangle=0$ for all $\nu$, $\nu'\in\eta_1(\lambda(X),\widetilde{y}_1)$. This also implies $\langle d,{a}^{\nu'}-{a}^\nu\rangle=0$ for all $\nu, \nu'\in\eta_1(\lambda(X),y_1)\cup\eta_1(\lambda(X),y_1')$. Moreover, it is easy to see  that for all $\mu\in\eta_2(\lambda(X),y_2)\cup\eta_2(\lambda(X),y_2')$, $\langle d, b^{\mu}\rangle=0$. This shows that  \eqref{eq:Ccon} holds.

}
\end{Remark}

The following lemma portraits the explicit form of the affine of the critical cone of $g$, which can be obtained directly from Lemma \ref{lemma:characofcc}. 

\begin{Lemma}\label{lemma:charofaffcc}
Suppose that $({X}, {Y}) \in \operatorname{gph} \partial g$ and $X$ has the eigenvalue decomposition \eqref{eq:eig-decomp1} with $P\in {\bf O}^n({X}) \cap {\bf O}^n({Y})$. 
Then we have $H \in {\rm aff}\,\mathcal{C}_g\left({X},{Y}\right)$ if and only if the conditions (a) and (c) of Lemma \ref{lemma:characofcc} hold and for any $(\nu,\mu)$ and $(\nu',\mu')\in\eta(\lambda({X}), \lambda({Y}))$, 
\begin{equation}\label{eq:affcha=01}
\langle\operatorname{diag}(P^{\top} H P), {a}^\nu\rangle=\langle\operatorname{diag}(P^{\top} H P), {a}^{\nu'}\rangle,
\end{equation}
and 
\begin{equation}\label{eq:affcha=02}\langle\operatorname{diag}(P^{\top} H P), {b}^\mu\rangle=\langle\operatorname{diag}(P^{\top} H P), {b}^{\mu'}\rangle=0.\end{equation}
\end{Lemma}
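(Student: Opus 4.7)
The overall plan exploits convexity of $g=\theta\circ\lambda$, which makes $\mathcal{C}_g(X,Y)$ a convex cone containing the origin, so that ${\rm aff}\,\mathcal{C}_g(X,Y) = \mathcal{C}_g(X,Y) - \mathcal{C}_g(X,Y)$. For the ``only if'' direction I would take $H = H_1 - H_2$ with $H_i \in \mathcal{C}_g(X,Y)$, apply Lemma \ref{lemma:characofcc} to each, and observe that conditions (a) and (c) cut out linear subspaces of $\S^n$, while the equality components embedded in (b)---namely that $\langle \mathrm{diag}(P^{\top} H_i P), a^\nu\rangle$ is $\nu$-independent for first-coordinates $\nu$ of pairs in $\eta(\lambda(X),\lambda(Y))$, and that $\langle \mathrm{diag}(P^{\top} H_i P), b^\mu\rangle = 0$ for second-coordinates $\mu$ of such pairs---are also linear. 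Subtracting the characterizations of $H_1$ and $H_2$ therefore preserves (a), (c), and yields \eqref{eq:affcha=01}--\eqref{eq:affcha=02} for $H$.

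For the substantial ``if'' direction, given $H$ that satisfies (a), (c), \eqref{eq:affcha=01}, and \eqref{eq:affcha=02}, the strategy is to manufacture an auxiliary $\bar H \in \mathcal{C}_g(X,Y)$ for which $\bar H + tH \in \mathcal{C}_g(X,Y)$ for all small $t>0$; then $H = t^{-1}\bigl[(\bar H + tH) - \bar H\bigr]$ sits in $\mathcal{C}_g(X,Y) - \mathcal{C}_g(X,Y) = {\rm aff}\,\mathcal{C}_g(X,Y)$. To build $\bar H$, I would pick any $d^0 \in \mathrm{ri}\,\mathcal{C}_\theta(\lambda(X),\lambda(Y))$ (nonempty because $\mathcal{C}_\theta$ is a nonempty polyhedral cone), set $G := \mathcal{P}^n_{\lambda(X)}\cap\mathcal{P}^n_{\lambda(Y)}$, and symmetrize $\bar d := |G|^{-1}\sum_{Q\in G} Q d^0$. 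The $G$-invariance of $\mathcal{C}_\theta$ together with the line-segment principle for relative interiors keeps $\bar d \in \mathrm{ri}\,\mathcal{C}_\theta$, and by construction $\bar d$ is constant on every $\gamma_s^l$. Applying the inequality $\langle\lambda(Y),\bar d\rangle\ge\langle Q\lambda(Y),\bar d\rangle$ (equivalent to $\bar d\in\mathcal{C}_\theta$) to a transposition $Q$ swapping one index in $\gamma_s^l$ with one in $\gamma_{s'}^l$ for $s<s'$---where $\lambda(Y)_{\gamma_s^l} > \lambda(Y)_{\gamma_{s'}^l}$ strictly---forces $\bar d|_{\gamma_s^l}\ge\bar d|_{\gamma_{s'}^l}$, so $\bar d$ is non-increasing on each $\alpha^l$. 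Set $\bar H := P\,\mathrm{Diag}(\bar d)\,P^{\top}$: property (a) is automatic because $P^{\top}\bar H P$ is diagonal, property (c) follows from Lemma \ref{lem:wIdforg} applied to $\bar d\in\mathcal{C}_\theta$, and the block-sorted monotonicity ensures $\lambda'(X,\bar H)=\bar d\in\mathcal{C}_\theta$, which supplies (b) via Remarks \ref{crit_charc} and \ref{remark:invariant-y1-y2}. Lemma \ref{lemma:characofcc} then places $\bar H\in\mathcal{C}_g(X,Y)$.

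To conclude, I verify $\bar H + tH\in\mathcal{C}_g(X,Y)$ for small $t>0$. Conditions (a) and (c) are linear and hence persist under this combination. Since $H$ itself satisfies (a), one has $\lambda'(X,\bar H+tH)=\bar d + t\,\mathrm{diag}(P^{\top} H P)$; the hypotheses \eqref{eq:affcha=01}--\eqref{eq:affcha=02} place $\mathrm{diag}(P^{\top} H P)$ in the linear subspace defined by the equality constraints in the representation of $\mathcal{C}_\theta(\lambda(X),\lambda(Y))$ (cf.\ \eqref{eq:Ccon} in Remark \ref{remark:invariant-y1-y2}), whereas $\bar d \in \mathrm{ri}\,\mathcal{C}_\theta$ makes all nonactive inequality constraints strict, a property preserved under sufficiently small perturbation. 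Thus $\bar d + t\,\mathrm{diag}(P^{\top} H P) \in \mathcal{C}_\theta(\lambda(X),\lambda(Y))$ for small $t>0$, which supplies (b) for $\bar H+tH$; Lemma \ref{lemma:characofcc} then concludes $\bar H+tH\in\mathcal{C}_g(X,Y)$, finishing the argument. The chief technical hurdle in this plan is establishing the block-sorted monotonicity $\bar d|_{\gamma_s^l}\ge\bar d|_{\gamma_{s'}^l}$ for $s<s'$, since without it $\lambda'(X,\bar H)$ need not coincide with $\bar d$ and $\bar H$ may fall outside $\mathcal{C}_g(X,Y)$; happily, this monotonicity is exactly what the subgradient inequality for the symmetric polyhedral $\theta$ delivers.
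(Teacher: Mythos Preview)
Your overall strategy for the ``if'' direction---constructing a base point $\bar H \in \mathcal{C}_g(X,Y)$ from a symmetrized relative-interior direction of $\mathcal{C}_\theta$ and then perturbing---is appealing, but the key computational step is wrong. You assert that ``since $H$ itself satisfies (a), one has $\lambda'(X,\bar H+tH)=\bar d + t\,\mathrm{diag}(P^{\top} H P)$.'' Condition (a) only forces $(P^\top HP)_{\alpha^l\alpha^l}$ to be block-diagonal with blocks $(P^\top HP)_{\gamma_s^l\gamma_s^l}$; when $s\notin\mathcal{E}^l$ these blocks need not be scalar (condition (c) says nothing there), so the eigenvalues of $\bar d|_{\gamma_s^l}I + t(P^\top HP)_{\gamma_s^l\gamma_s^l}$ are $\bar d|_{\gamma_s^l}+t\,\lambda\big((P^\top HP)_{\gamma_s^l\gamma_s^l}\big)$, not the diagonal entries. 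Concretely, take $\theta=\max_i x_i$ on $\R^3$, $X=I$, $Y=\mathrm{Diag}(1,0,0)$, $\bar d=(1,0,0)$, and $H$ with $H_{23}=H_{32}=1$ and all other entries zero: then $\lambda'(X,\bar H+tH)=(1,t,-t)$ whereas your formula gives $(1,0,0)$.

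This error cascades in two ways. First, to place $\lambda'(X,\bar H+tH)$ in $\mathcal{C}_\theta$ you must also verify the \emph{inequality} constraints indexed by $\nu'\in\iota_1\setminus\eta_1$ and $\mu'\in\iota_2\setminus\eta_2$; the vectors $a^{\nu'},b^{\mu'}$ need not be constant on $\gamma_s^l$ for $s\notin\mathcal{E}^l$, so the eigenvalue/diagonal discrepancy is not absorbed by a trace argument. Second, even the ordered-spectral-decomposition requirement of Remark~\ref{crit_charc}(ii) is at risk: you only establish the weak inequality $\bar d|_{\gamma_s^l}\ge\bar d|_{\gamma_{s'}^l}$ for $s<s'$, so for small $t$ an eigenvalue of block $s'$ may overtake one of block $s$, destroying the cross-block ordering that (ii)$'$ demands. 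The paper avoids both difficulties by working at the vector level first---showing $\lambda'(X,H)\in\mathrm{aff}\,\mathcal{C}_\theta$ and writing it as $\sigma h_1+(1-\sigma)h_2$ with $h_i\in\mathcal{C}_\theta$---and then \emph{lifting} each $h_i$ back to a matrix $H_i$ via an explicit blockwise spectral construction, with Fan's inequality supplying the simultaneous ordered decomposition.
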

\begin{proof}
Assume first that  $H\in{\rm aff}\,\mathcal{C}_g\left({X},{Y}\right)$.  This is equivalent to saying that there exist $H_1, H_2\in\mathcal{C}_g\left({X},{Y}\right)$ and a constant $\sigma\in\R$ such that $H=\sigma H_1+(1-\sigma)H_2$.  
It follows from Lemma \ref{lemma:characofcc} that \eqref{eq:affcha=01}
and \eqref{eq:affcha=02} hold. 

To prove the opposite implication,  we can argue as the proofs of \cite[the first equation on page 10]{cuiding} and  Lemma \ref{lemma:characofcc}(c) to ensure that for any $(\nu,\mu)\in\eta(\lambda({X}), \lambda({Y}))$,  we have $\langle\operatorname{diag}(P^{\top} H P), {a}^\nu\rangle=\langle\lambda'(X,H), {a}^{\nu}\rangle$ and $\langle\operatorname{diag}(P^{\top} H P), {b}^\mu\rangle=\langle\lambda'(X,H), {b}^\mu\rangle$. Together with \eqref{eq:affcha=01} and \eqref{eq:affcha=02}, we get $\lambda'(X,H)\in\mbox{aff}\,{\cal C}_{\theta}(\lambda(X),\lambda(Y))$. 
Hence, there exist $h_1, h_2\in{\cal C}_\theta(\lambda(X),\lambda(Y))$ and  $\sigma\in\R$ such that  $\lambda'(X,H)=\sigma h_1+(1-\sigma)h_2$. For each $l \in\{1, \ldots, r\}$ and $k \in \mathcal{E}^l$, we conclude  from Lemma \ref{lem:wIdforg} that $(h_1)_{\gamma^l_k}=\widehat{\rho}_k^l{\bf 1}$ and $(h_2)_{\gamma^l_k}=\widetilde{\rho}_k^l{\bf 1}$ with $\sigma\widehat{\rho}_k^l+(1-\sigma)\widetilde{\rho}_k^l=\rho_k^l$. Pick $H_i$, $i=1,2$ such that $(P^{\top}H_iP)_{\gamma_k^l\gamma_k^l}=U_{kl}\mbox{Diag}((h_i)_{\gamma^l_k})U_{kl}^{\top}$ with $U_{kl}\in{\bf O}^{|\gamma_k^l|}((P^{\top}HP)_{\gamma_k^l\gamma_k^l})$, and for all $w\in\gamma_k^l, j\in\gamma_{k'}^{l'}$ with $\gamma_k^l\neq\gamma_{k'}^{l'}$, 
\begin{equation}\label{eq:od0}
(P^{\top}H_1P)_{wj}=\sigma (P^{\top}HP)_{wj} \quad \mbox{and}\quad (P^{\top}H_2P)_{wj}=(1-\sigma)(P^{\top}HP)_{wj},
\end{equation}
where the index sets $\gamma^l_k$ are taken from Remark~\ref{part}(P3). We claim that $\lambda'(X,H_i)\in{\cal C}_{\theta}(\lambda(X),\lambda(Y))$, $i=1,2$. To justify it, one can  check directly that there exists a block permutation matrix $Q=\mbox{Diag}\,(Q_1,\dots,Q_r)$ with $Q_l\in{\cal P}^{|\alpha^l|}$ such that $\lambda'(X,H_i)=Qh_i$, $i=1,2$, where the index sets $\al^l$ are taken from Remark~\ref{part}(P1). We infer from $h_i\in{\cal C}_{\theta}(\lambda(X),\lambda(Y))$ that 
\begin{equation}\label{eq:cc}
  \begin{aligned}  \theta'(\lambda(X),h_i)&=\sup_{z\in\partial\theta(\lambda(X))}\langle z,h_i\rangle=\langle h_i,\lambda(Y)\rangle\leq\langle \lambda'(X,H_i),\lambda(Y)\rangle\leq\sup_{z\in\partial\theta(\lambda(X))}\langle z,\lambda'(X,H_i)\rangle\\
&=\sup_{z\in\partial\theta(\lambda(X))}\langle Qz,h_i\rangle=\sup_{Qz\in\partial\theta(\lambda(X))}\langle Qz,h_i\rangle=\sup_{z\in\partial\theta(\lambda(X))}\langle z,h_i\rangle, \quad i=1,2,
  \end{aligned}
\end{equation}
where the penultimate equality comes from the fact that for all $z\in\partial\theta(\lambda(X))$, we have $Qz\in\partial\theta(\lambda(X)) $, which is a consequence of $\th$ being symmetric. 
Thus, the inequalities in \eqref{eq:cc}   become equalities, which implies 
$$\langle \lambda'(X,H_i),\lambda(Y)\rangle=\sup_{z\in\partial\theta(\lambda(X))}\langle z,\lambda'(X,H_i)\rangle=\theta'(\lambda(X),\lambda'(X,H_i)),\quad i=1,2.
$$
This proves our claim above, namely $\lambda'(X,H_i)\in{\cal C}_{\theta}(\lambda(X),\lambda(Y))$, $i=1,2$.  Since $H$ satisfies the condition in Lemma \ref{lemma:characofcc}(a), it  follows from \eqref{eq:od0}  that $H_1$ and $H_2$ also satisfy the latter condition.  
Therefore, we infer from  \eqref{eq:cc} and \eqref{eq:egdd}  that 
$$\begin{aligned}
&\sum_{l=1}^r\langle (P^{\top}H_iP)_{\alpha^l\alpha^l},\Lambda(Y)_{\alpha^l\alpha^l}\rangle=\sum_{l=1}^r\sum_{k=1}^{u^l}\langle (P^{\top}H_iP)_{\gamma^l_k\gamma^l_k},\Lambda(Y)_{\gamma^l_k\gamma^l_k}\rangle=\sum_{l=1}^r\sum_{k=1}^{u^l}\langle (h_i)_{\gamma^l_k},(\lambda(Y))_{\gamma^l_k}\rangle\\
&=\langle h_i,\lambda(Y)\rangle=\langle \lambda'(X,H_i),\lambda(Y)\rangle=\sum_{l=1}^r\langle \Lambda((P^{\top}H_iP)_{\alpha^l\alpha^l}),\Lambda(Y)_{\alpha^l\alpha^l}\rangle,\quad i=1,2.\end{aligned} $$
It follows from Fan's inequality \cite{Fan49} that for any $l=1,\dots,r$,  the matrices $\Lm(Y)_{\al^l\al^l}$ and $P_{\al^l}^\top H_iP_{\al^l}$, $i=1, 2$, have a simultaneous ordered spectral decomposition. Combining this with $\lambda'(X,H_i)\in{\cal C}_{\theta}(\lambda(X),\lambda(Y))$, $i=1, 2$ and applying Remark \ref{crit_charc}, we deduce that   $H_1$, $H_2\in{\cal C}_g(X,Y)$. Since $H=\sigma H_1+(1-\sigma)H_2$,
we arrive at $H \in {\rm aff}\,\mathcal{C}_g\left({X},{Y}\right)$,
which completes the proof. 
\end{proof}

Suppose that  $(X,Y)\in \gph \sub g$ and define the set 
$$\mathcal{R}^n_{\gtrsim}(X):=\{{d}\in\R^n \mid {d}_1\geq\dots\geq{d}_{|\alpha^1|};\dots;{d}_{n-|\alpha^{r}|+1}\geq\cdots\geq{d}_{n} \},$$
where the index sets $\al^l$, $l=1,\ldots,r$ are defined by \eqref{eq:def-alpha}. 
Using the index set $\eta(\lambda(X),\lambda(Y))$ from \eqref{eq:eta}, define the index sets  
$$\eta_1(\lambda(X),\lambda(Y)):=
\Big\{\nu\in\iota_1(\lambda(X))\big |\; \exists\,\mu\in\iota_2(\lambda(X))\;\mbox{such that}\;(\nu,\mu)\in\eta(\lambda(X),\lambda(Y))\Big\}
$$ 
and 
$$\eta_2(\lambda(X),\lambda(Y)):=
\Big\{\mu\in\iota_2(\lambda(X))\big|\; \exists\,\nu\in\iota_1(\lambda(X))\;\mbox{such that}\;(\nu,\mu)\in\eta(\lambda(X),\lambda(Y))\Big\}.
$$ 
It is not hard to see that 
\begin{equation}\label{eta_index}
\eta(\lambda(X),\lambda(Y))=\eta_1(\lambda(X),\lambda(Y))\times \eta_2(\lambda(X),\lambda(Y)).
\end{equation}
Taking into account  these index sets, define the set $K(X,{Y})$ as
\begin{equation}\label{eq:defintset}
\begin{aligned}
\left\{d\in\mathcal{R}^n%_{\gtrsim}(X) 
\Bigg |\left.
\begin{array}{l}
\begin{array}{l}
\langle d,{a}^{\nu'}-{a}^\nu\rangle=0, 
\quad\nu, \nu'\in \eta_1(\lambda(X),\lambda(Y)),\\[3pt]
\langle d,{b}^\mu\rangle=0, \quad\quad\quad\;\mu\in \eta_2(\lambda(X),\lambda(Y)),
\end{array}
\\ [3pt]
\begin{array}{l}
    \langle d,{a}^{\nu'}-{a}^\nu\rangle<0, \quad\nu'\in\iota_1(\lambda(X))\backslash\eta_1(\lambda(X),\lambda(Y)), \nu\in\eta_1(\lambda(X),\lambda(Y)),\\[3pt]
     \langle d,{b}^{\mu}\rangle<0, \quad\quad\quad\;\mu\in\iota_2(\lambda(X))\backslash\eta_2(\lambda(X),\lambda(Y))
\end{array}
\end{array}\right.\right\}. 
\end{aligned}
\end{equation}

In what follows, we often are going to assume that $K(X,Y)\neq\varnothing$ for a given $(X,Y)\in \gph \sub g$. It is not hard to see that  this condition can be  checked when $\theta$ is a polyhedral function. Indeed, we will demonstrate in Examples \ref{exp:le}-\ref{exp:combine} that  this condition  automatically holds for important instances of the spectral function $g$. 

The following lemma provides a simple sufficient condition, holding automatically  in important examples of the spectral function in \eqref{spec}, under which  $K(X,Y)\neq\varnothing$ is satisfied.  To do so, consider the following system of equations:
    \begin{equation}\label{eqs1}\left[\begin{array}{c}
b_1^{\top} \\
 \vdots \\
 b_{|\eta_2(\lambda(X),\lambda(Y))|}^{\top}\\
 b_{|\eta_2(\lambda(X),\lambda(Y))|+1}^{\top}\\
 \vdots\\
 b_{|\iota_2(\lambda(X))|}^{\top}
\end{array}\right]d=\left[\begin{array}{c}
0 \\
 \vdots \\
 0\\
 -1\\
 \vdots \\
 -1
\end{array}\right]
\quad \mbox{and}\quad 
\left[\begin{array}{ccccc}
1,-1,0,\dots,0\\
1,0,-1,\dots,0\\
\vdots\\
1,0,0\dots,-1
\end{array}\right]\left[\begin{array}{c}
a_1^{\top} \\
 \vdots \\
 a_{|\eta_1(\lambda(X),\lambda(Y))|}^{\top}\\
 a_{|\eta_1(\lambda(X),\lambda(Y))|+1}^{\top}\\
 \vdots\\
 a_{|\iota_1(\lambda(X))|}^{\top}
\end{array}\right]d=\left[\begin{array}{c}
0 \\
 \vdots \\
 0\\
 -1\\
 \vdots \\
 -1
\end{array}\right].\end{equation}

\begin{Lemma}\label{suff_k}
Suppose that $({X}, {Y}) \in \operatorname{gph} \partial g$ and $X$ has the eigenvalue decomposition \eqref{eq:eig-decomp1} with $P\in {\bf O}^n({X}) \cap {\bf O}^n({Y})$. If the sets $\{b^i\}_{i\in\iota_2(\lambda(X))}$ and  $\{a^i\}_{i\in\iota_1(\lambda(X))}$ are linear independent and there exists a common solution to the systems of equations in   \eqref{eqs1}, then we have  $K(X,Y)\neq\varnothing$.  
\end{Lemma}
\begin{proof}
We first consider the case where $\theta=\theta_2$ in \eqref{eq:def_phi12}. 
 It is easy to see that when $\{b^i\}_{i\in\iota_2(\lambda(X))}$ are chosen from orthogonal basis of $\mathbb{R}^n$, $d\in K(X,Y)$ can be chosen directly by letting $d_{\eta_2(\lambda(X),\lambda(Y))}=0$ and $d_{\iota_2(\lambda(X))\backslash\eta_2(\lambda(X),\lambda(Y))}=-1$. When $\{b^i\}_{i\in\iota_2(\lambda(X))}$ are linear independent,  the left-hand side system of equations in \eqref{eqs1} has a solution $d$.
Thus,  $d\in K(X,Y)$, which implies $K(X,Y)\neq\varnothing$. 

For the case where where $\theta=\theta_1$ in \eqref{eq:def_phi12}, 
it follows from our assumption that the right-hand side system of equations in \eqref{eqs1} has a solution $d$. Thus, we again arrive at
$d\in K(X,Y)$, which implies $K(X,Y)\neq\varnothing$. 

When $\theta=\theta_1+\theta_2$ in \eqref{eq:def_phi12}, the proof is  similar to the above two cases, so we omit it here for simplicity. 
\end{proof}

Note in Lemma~\ref{suff_k} that in many application we often have $\th=\th_1$ or $\th=\th_2$. In such a case the second assumption therein on the existence of a common solution to \eqref{eqs1} is superfluous.

Now we have been equipped with the necessary tools for the derivation of the minimal quadratic bundle of this specific spectral function. The following proposition is the first step.

\begin{Proposition}\label{quad_minimal}
Suppose that $({X}, {Y}) \in \operatorname{gph} \partial g$ and $X$ has the eigenvalue decomposition \eqref{eq:eig-decomp1} with $P\in {\bf O}^n({X}) \cap {\bf O}^n({Y})$.   
Suppose that $K(X,Y)\neq\varnothing$. 
Then,  the proper convex function $q:\S^n\to \oR$,  defined for any $H\in\S^n$ by 
	\begin{eqnarray}
	q(H)=\varUpsilon_{X,Y}\big(H\big)+\delta_{{\rm aff}\,{\cal C}_{g}(X,Y)}(H),\label{eq:qeqsgF}
	\end{eqnarray}
is a GQF and  $q\in {\rm quad}\,g(X,{Y})$, where $\varUpsilon_{X,Y}$ is taken from \eqref{eq:sigtsdp}.
\end{Proposition}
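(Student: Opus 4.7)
I would split the argument into two halves: verifying that $q$ is a GQF, and constructing a sequence in $\gph\sub g$ witnessing $q\in\quadr g(X,Y)$.

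For the GQF property, Lemma~\ref{lemma:charofaffcc} characterizes $\aff\mathcal{C}_g(X,Y)$ via homogeneous linear equalities in $H$ (items (a) and (c) of Lemma~\ref{lemma:characofcc} together with \eqref{eq:affcha=01}--\eqref{eq:affcha=02}), so $\aff\mathcal{C}_g(X,Y)$ is a linear subspace of $\S^n$. Since $\varUpsilon_{X,Y}$ in \eqref{eq:sigtsdp} is a symmetric quadratic form in $H$ (the coefficient $\tfrac{\lambda_i(Y)-\lambda_j(Y)}{\lambda_i(X)-\lambda_j(X)}$ being symmetric in $(i,j)$), its restriction to this subspace is a GQF per Definition~\ref{def:GQF}.

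For the approximating sequence, I would use $K(X,Y)\neq\emptyset$ to pick a direction $d\in K(X,Y)$ that is constant on each $\gamma_s^l$ (the partition of Remark~\ref{part}(P3)) with strictly decreasing values across $s=1,\ldots,u^l$. This choice is compatible with the defining constraints of $K(X,Y)$ because Lemma~\ref{lem:wIdforg} already forces constancy on $\gamma_s^l$ for $s\in\mathcal{E}^l$, while for $s\notin\mathcal{E}^l$ only the sum of $d$'s components on $\gamma_s^l$ enters the equality constraints. Given $t_k\searrow 0$, I set $X^k:=P\,\diag(\lambda(X)+t_kd)\,P^{\top}$ and $Y^k:=Y$. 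The strict-inequality clauses of \eqref{eq:defintset} imply that for large $k$ we have $\iota_1(\lambda(X^k))=\eta_1(\lambda(X),\lambda(Y))$ and $\iota_2(\lambda(X^k))=\eta_2(\lambda(X),\lambda(Y))$, while the equality clauses and \eqref{eta_index} together deliver a representation of $\lambda(Y)\in\sub\theta(\lambda(X^k))$ in which every active index carries a strictly positive coefficient. Thus $\lambda(Y)\in\ri\sub\theta(\lambda(X^k))$, and the Lewis-type formula for the subdifferential of the spectral function $g$ (together with $P\in{\bf O}^n(X^k)\cap{\bf O}^n(Y)$) upgrades this to $Y\in\ri\sub g(X^k)$; Proposition~\ref{sec_subd}(b) then certifies that each $\d^2 g(X^k,Y^k)$ is a GQF.

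The hardest step is verifying the epi-convergence $\d^2 g(X^k,Y^k)\xrightarrow{e} q$, using $\d^2 g(X^k,Y^k)=\varUpsilon_{X^k,Y^k}+\dd_{\mathcal{C}_g(X^k,Y^k)}$ from Proposition~\ref{sec_subd}(a). By the choice of $d$, the partition of $\alpha^l$ by the distinct eigenvalues of $X^k$ coincides with $\gamma_s^l$ (the $\chi_t^l$ of Remark~\ref{part}(P2) equals $\gamma_s^l$), and because $Y$ has a constant eigenvalue on each $\gamma_s^l$, condition (a) of Lemma~\ref{lemma:characofcc} applied at $(X^k,Y^k)$ becomes trivial; consequently $\mathcal{C}_g(X^k,Y^k)$ may properly contain $\aff\mathcal{C}_g(X,Y)$, lacking the inter-$\gamma_s^l$ vanishing condition present at $(X,Y)$. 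The rescuing mechanism is built into $\varUpsilon_{X^k,Y^k}$ itself: for pairs $i\in\gamma_s^l$, $j\in\gamma_{s'}^l$ with $s\neq s'$ inside the same $\alpha^l$, the coefficient $\frac{\lambda_i(Y)-\lambda_j(Y)}{t_k(d_i-d_j)}$ has a nonzero numerator and a vanishing denominator, diverging to $\pm\infty$. Hence $\d^2 g(X^k,Y^k)(H)\to+\infty$ for any $H\in\mathcal{C}_g(X^k,Y^k)\setminus\aff\mathcal{C}_g(X,Y)$, while for $H\in\aff\mathcal{C}_g(X,Y)$ the problematic off-diagonals vanish by Lemma~\ref{lemma:characofcc}(a) and the remaining coefficients (pairs $i,j$ in different $\alpha^l$'s) converge to those of $\varUpsilon_{X,Y}$. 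A direct verification of the $\liminf$ and recovery-sequence conditions then delivers $\d^2 g(X^k,Y^k)\xrightarrow{e} q$, completing the proof that $q\in\quadr g(X,Y)$.
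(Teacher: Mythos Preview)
Your overall strategy—perturb $X$ along a direction in $K(X,Y)$, keep $Y^k=Y$, and track the epi-limit of $\d^2 g(X^k,Y^k)$—matches the paper's. But there is a gap in the choice of perturbation direction. You assert that one can pick $d\in K(X,Y)$ constant on every block $\gamma_s^l$ \emph{and} with strictly decreasing block values across $s$, yet you justify only the constancy and only for the equality constraints. It is true that for $s\notin\mathcal{E}^l$ the vectors $a^\nu,b^\mu$ with $(\nu,\mu)\in\eta(\lambda(X),\lambda(Y))$ are constant on $\gamma_s^l$, so averaging $d$ over such a block preserves the equalities in \eqref{eq:defintset}. However, the \emph{strict} inequalities involve $a^{\nu'}$ with $\nu'\in\iota_1\setminus\eta_1$ and $b^{\mu'}$ with $\mu'\in\iota_2\setminus\eta_2$, and these are not forced to be constant on $\gamma_s^l$ for $s\notin\mathcal{E}^l$; averaging can therefore destroy membership in $K(X,Y)$. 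Even granting constancy, you give no mechanism to produce strict decrease across consecutive $\gamma_s^l$'s while remaining in $K(X,Y)$. Without strict decrease the identification $\chi_t^l=\gamma_s^l$ fails, and with it your entire simplification of the epi-convergence argument collapses.

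The paper sidesteps this by taking an \emph{arbitrary} $w\in K(X,Y)$ and working with whatever partition $\chi_t^l$ it induces; Lemma~\ref{lem:wIdforg} only guarantees that each $\gamma_s^l$ with $s\in\mathcal{E}^l$ is contained in a single $\chi_t^l$, nothing more. This forces a genuinely two-part epi-convergence argument: the ``between-$\alpha^l$'' piece $f_1^k$ converges continuously to $\varUpsilon_{X,Y}$, while the ``within-$\alpha^l$'' piece $f_2^k$ together with $\dd_{\mathcal{C}_g(X^k,Y^k)}$ is shown to epi-converge to $\dd_{\aff\mathcal{C}_g(X,Y)}$. The liminf direction uses the monotone blow-up of $f_2^k$ to pin down the set $\mathcal{D}_1$ in \eqref{d1}, combines it with $\mathcal{C}_2^k\subset\mathcal{D}_2$, and then invokes the finer partition $\beta_s^l(t)$ of \eqref{eq:partonXpY2} and the index set $\mathcal{F}^l$ to recover all three conditions of Lemma~\ref{lemma:charofaffcc} for any limit point with finite liminf. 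The recovery-sequence direction also requires work you gloss over: verifying $H\in\aff\mathcal{C}_g(X,Y)\Rightarrow H\in\mathcal{C}_g(X^k,Y^k)$ is not automatic, because $\mathcal{C}_g(X^k,Y^k)$ carries the simultaneous-ordered-decomposition requirement at the finer $\chi_t^l$ level, and the paper needs computations like \eqref{eq:S28P4} and an appeal to Fan's inequality to close this.
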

\begin{proof}
Since $Y\in \sub g(X)=\sub (\th\circ \lm)(X)$, 
Set $\overline{\iota}_1:=\iota_1(\lambda(X))$, $\overline{\iota}_2:=\iota_2(\lambda(X))$, $\overline{\eta}:=\eta(\lambda(X),\lambda(Y))$.   
We break the proof into two steps. 
\vskip 5pt
\noindent{\em Step 1: There is  a sequence $\{(X^k,Y^k)\b\subset \gph \sub g$ such that 
$$\iota_1(\lambda(X^k))\times\iota_2(\lambda(X^k))=\eta(\lm(X^k),\lm(Y^k))=\overline{\eta}. 
$$}

To prove it,  take  $w\in K(X,Y)$. 
Define the sequence $\{{w}^k\b$ by $w^k:={w}/k$ for each $k$. Clearly, $\{{w}^k\b$ converges to 0 and for each $k$, ${w}^k$ belongs to the right hand side of \eqref{eq:defintset}. 
It follows from Lemma \ref{lem:wIdforg} that there exists $\rho_s^l$ such that for all $s\in{\cal E}^l$, 
\begin{equation}\label{eq:equconst}
w_{\gamma_s^l}=\rho^l_s{\bf 1}_{|\gamma_s^l|}.
\end{equation} 
For each $k$, define ${x}^k:=\lambda({X})+w^k$ and $X^k:={P}{\rm Diag}({x}^k){P}^{\top}$ with $P\in {\bf O}^n({X}) \cap {\bf O}^n({Y})$ and $Y^k={Y}$. Then, we have $Qx^k=\lambda(X^k)$ for some $Q\in{\cal P}^n$ and   $\iota_1(x^k)\times\iota_2(x^k)=\overline{\eta}$ for any $k$ sufficiently large. Observe that $Q$ can be chosen independent of $k$ by passing to a subsequence if necessary. Indeed, $\iota_1(x^k)\times\iota_2(x^k)\supset\overline{\eta}$ follows directly from \eqref{eta_index} and 
\cite[Theorem 2.1]{MSarabi18}. Suppose by contradiction that there exists $(\nu',\mu')\in\iota_1(x^k)\times\iota_2(x^k)$ but $(\nu',\mu')\notin\overline{\eta}$. By \eqref{eta_index}, we have $\nu'\in\iota_1(x^k)\backslash\eta_1(\lambda(X),\lambda(Y))$ or $\mu'\in\iota_2(\lambda(X))\backslash\eta_2(\lambda(X),\lambda(Y))$. It follows from the construction of $X^k$ that we should have for all $(\nu,\mu)\in\overline{\eta}$, 
\begin{equation}\label{eq:x2l}\langle x^k,a^{\nu'}\rangle-c_{\nu'}<\langle x^k,a^{\nu}\rangle-c_{\nu}\;\mbox{or}\;\langle x^k,b^{\mu'}\rangle-d_{\mu'}<\langle x^k,b^{\mu}\rangle-d_{\mu},
\end{equation}
which implies $\nu'\notin\iota_1(x^k)$ or $\mu'\notin\iota_2(x^k)$. Thus, we have $(\nu',\mu')\notin\iota_1(x^k)\times\iota_2(x^k)$, which leads to a contradiction.  Therefore, we have verified $\iota_1(x^k)\times\iota_2(x^k)=\overline{\eta}$. 
Now, we claim that $\iota_1(\lambda(X^k))\times\iota_2(\lambda(X^k))=\overline{\eta}$. Still from \eqref{eta_index} and 
\cite[Theorem 2.1]{MSarabi18}, we get $\iota_1(\lambda(X^k))\times\iota_2(\lambda(X^k))\supset\overline{\eta}$. 
We conclude for all $\nu\in\iota_1(\lambda(X^k))$,  $\mu\in\iota_2(\lambda(X^k))$, and for all $\nu'\in \iota_1(\lambda(X^k))^c$ and $\mu'\in\iota_2(\lambda(X^k))^c$ that 
$$\langle \lambda(X^k),a^{\nu'}\rangle-c_{\nu'}<\langle \lambda(X^k),a^{\nu}\rangle-c_{\nu}\quad\mbox{and}\quad\langle \lambda(X^k),b^{\mu'}\rangle-d_{\mu'}<\langle \lambda(X^k),b^{\mu}\rangle-d_{\mu},$$
which lead us to  
$$\langle x^k,Qa^{\nu'}\rangle-c_{\nu'}<\langle x^k,Qa^{\nu}\rangle-c_{\nu}\quad \mbox{and}\quad \langle x^k,Qb^{\mu'}\rangle-d_{\mu'}<\langle x^k,Qb^{\mu}\rangle-d_{\mu}.$$
Therefore, it follows from \cite[Proposition 1]{cuiding} that there exists $i$ such that 
$$\langle x^k,Qa^{\nu}\rangle-c_{\nu}=\langle x^k,a^i\rangle-c_i$$
and exists $j$ such that 
$$\langle x^k,Qb^{\mu}\rangle-d_{\mu}=\langle x^k,b^j\rangle-d_j, $$
meaning that  $i\in\iota_1(x^k)$ and $j\in\iota_2(x^k)$. Combining this with $$|\iota_1(x^k)|+|\iota_1(x^k)^c|=|\iota_1(\lambda(X^k))|+|\iota_1(\lambda(X^k))^c|,$$
we have  $|\iota_1(\lambda(X^k))||\iota_2(\lambda(X^k))|\leq|\iota_1(x^k)||\iota_2(x^k)|=|\bar{\eta}|$. Since  $\iota_1(\lambda(X^k))\times\iota_2(\lambda(X^k))\supset\overline{\eta}$, we have $\iota_1(\lambda(X^k))\times\iota_2(\lambda(X^k))=\overline{\eta}$.  
Since $w^k \in {\cal C}_{\theta}(\lambda({X}),\lambda(Y))$, 
we arrive at 
$$
(x^k, \lm(Y))- (\lm(X), \lm(Y))=(w^k,0)\in \gph N_{{\cal C}_{\theta}(\lambda({X}),\lambda(Y))}.
$$
Employing now the reduction lemma for polyhedral functions
in \cite[Theorem~3.1]{hjs}, we obtain $\lm(Y)\in \sub \th\big(x^k\big)$. By using \eqref{partialdiffphi} and the symmetric property of $\theta$, we also have $\lm(Y)\in \sub \th\big(\lambda(X^k)\big)$. 
It can be seen directly that $\overline{\eta}=\eta(\lm(X^k),\lm(Y^k))$.  

\vskip 5pt
\noindent{\em Step 2: The sequence $\{\d^2g(X^k, Y^k)\b$ epi-converges to   the function on the right hand-side of \eqref{eq:qeqsgF}. Moreover, $\d^2g(X^k, Y^k)$ is a GQF for each $k$. }

To prove the claim, define the index set $\{\chi_t^l\}$ 
as the one in Remark~\ref{part}(P1) for   the sequence $\{(X^k,Y^k)\b$ defined above. 
Since $\theta$ is symmetric and polyhedral, we know from   Lemma \ref{sec_subd} that for any $H\in\S^n$, 

\begin{equation*}
\begin{aligned}
&\d^2g(X^k, Y^k)(H)
=2\sum\limits_{1\leq l\leq l'\leq r}\sum\limits_{1\leq t\leq t'\leq z^l\atop (l,t)\neq(l',t')}\sum\limits_{i\in\chi^l_t}\sum\limits_{j\in\chi^{l'}_{t'}}\frac{\lambda_i({Y})-\lambda_j({Y})}{\lambda_i(X^k)-\lambda_j(X^k)}(P^{\top}HP)^2_{ij}
+\delta_{{\cal C}_{g}(X^k,Y^k)}(H)\\
=&2\sum\limits_{1\leq l< l'\leq r}\sum\limits_{i\in\alpha^l}\sum\limits_{j\in\alpha^{l'}}\frac{\lambda_i({Y})-\lambda_j({Y})}{\lambda_i(X^k)-\lambda_j(X^k)}(P^{\top}HP)^2_{ij}\\
&+2\sum\limits_{1\leq l\leq r}\sum\limits_{1\leq t<t'\leq z^l}\sum\limits_{i\in\chi^l_t}\sum\limits_{j\in\chi^{l}_{t'}}\frac{\lambda_i({Y})-\lambda_j({Y})}{\lambda_i(X^k)-\lambda_j(X^k)}({P}^{\top}H{P})^2_{ij}
+\delta_{{\cal C}_{g}(X^k,Y^k)}(H).
\end{aligned}
\end{equation*}	
According to Lemma~\ref{lemma:characofcc} and the observation in {\em Step 1}, we can conclude that   $\d^2g(X^k, Y^k)$ is a GQF for each $k$. 
For any $H\in \S^n$, set 
$$f_1^k(H):=2\sum\limits_{1\leq l< l'\leq r}\sum\limits_{i\in\alpha^l}\sum\limits_{j\in\alpha^{l'}}\frac{\lambda_i({Y})-\lambda_j({Y})}{\lambda_i(X^k)-\lambda_j(X^k)}({P}^{\top}H{P})^2_{ij},$$
$$f_2^k(H):=2\sum\limits_{1\leq l\leq r}\sum\limits_{1\leq t<t'\leq z^l}\sum\limits_{i\in\chi^l_t}\sum\limits_{j\in\chi^{l}_{t'}}\frac{\lambda_i({Y})-\lambda_j({Y})}{\lambda_i(X^k)-\lambda_j(X^k)}({P}^{\top}H{P})^2_{ij}.$$ 
Indeed, $f_1$ consists of terms in $\d^2g(X^k, Y^k)$ where the indices $i$ and $j$ belong to two different index sets $\al^l$ and $\al^{l'}$. On the other hand, $f_2$ captures those terms in $\d^2g(X^k, Y^k)$ where the indices $i$ and $j$ belong to the same $\al^l$. 
 By the definition of continuous convergence from \cite[page 250]{rw}, we conclude that $f_1^k$ converges continuously to $\varUpsilon_{X,Y}$ defined by  \eqref{eq:sigtsdp}. 
Next, we are going to show that $f_2^k+\delta_{{\cal C}_{g}(X^k,Y^k)}$ epi-converges to $\delta_{\mbox{aff}\,{\cal C}_{g}({X},{Y})}$.  To prove it,  define for each $k$
\begin{equation}\label{eq:CCk}{\cal C}_1^k=
\left\{H\in\S^n\Big |\begin{array}{l}
 \langle\lambda'(X^k,H),{a}^\nu-{a}^{\nu'}\rangle=0\;\;\mbox{for all}\;\; \nu,\nu'\in \iota_1(\lm(X^k)),\\
\langle\lambda'(X^k,H),{b}^\mu\rangle=0\;\;\mbox{for all}\;\;  \mu\in \iota_2(\lm(X^k))\\
\end{array}\right\},\end{equation}
and
$${\cal C}_2^k=\left\{ H\in\S^n\Big | \begin{array}{l}
\Lambda(Y)_{\chi_t^l\chi_t^l}\;\mbox{and}\;P^{\top}_{\chi_t^l}HP_{\chi_t^l}\;\mbox{have a simultaneous ordered spectral decomposition}
\end{array}\right\}.$$
Applying Remark~\ref{crit_charc}, Remark~\ref{remark:invariant-y1-y2},
and Lemma~\ref{crit_gqf} for $(X^k,Y^k)$ shows that ${\cal C}_{g}(X^k,Y^k)={\cal C}_1^k\cap{\cal C}_2^k$. 
\vskip 5pt
\noindent{\em Step 2.1: For any $H\in \S^n$ and every $H^k\rightarrow H$, 
we have 
\begin{equation}\label{epi_limit}
\liminf_{k\rightarrow\infty}(f_2^k+\delta_{{\cal C}_{g}(X^k,Y^k)})(H^k)\geq\delta_{{\rm aff}\,{\cal C}_{g}(X,Y)}(H).
\end{equation}}

It is easy to see that $f_2^k+\delta_{{\cal C}_{g}(X^k,Y^k)}\geq0$.
Pick $\widetilde H\in \S^n$ and and assume that  $H^k\rightarrow \widetilde H$. If $\liminf\limits_{k\rightarrow\infty}(f_2^k+\delta_{{\cal C}_{g}(X^k,Y^k)})(H^k)=\infty$, the claimed inequality clearly holds. 
Assume that $\liminf\limits_{k\rightarrow\infty}(f_2^k+\delta_{{\cal C}_{g}(X^k,Y^k)})(H^k)<\infty$. Thus, we can assume by passing to a subsequence, if necessary, that $H^k\in {\cal C}_{g}(X^k,Y^k)\cap \dom f^k_2$ and for any $k$.
 Our goal is to show that $\widetilde H\in {\rm aff}\,{\cal C}_{g}(X,Y)$
via the equivalent description of the latter set from Lemma~\ref{lemma:charofaffcc}. To this end, observe from 
 the definition of ${w}^k$ that  the sequence  $\{f_2^k\}$ is nondecreasing, since 
$$
\lambda_i(X^k)-\lambda_j(X^k)=\frac{w_i-w_j}{k} \ge \frac{w_i-w_j}{k+1}=\lambda_i(X^{k+1})-\lambda_j(X^{k+1})
$$
for any $i\in \chi^l_t$ and $j\in \chi^l_{t'}$ with $l\in \{1,\ldots,r\}$
and $1\le t<t'\le z^l$. Thus,  \cite[Proposition 7.4]{rw} implies that $f_2^k$ epi-converges to $\mbox{sup}_k\{\mbox{cl}f_2^k\}$, where $\mbox{cl}f_2^k$ is the closure of $f_2^k$ (cf. \cite[equation~1(6)]{rw}).  It is easy to see that $\mbox{sup}_k\{\mbox{cl}f_2^k\}=\mbox{sup}_k\{f_2^k\}=\delta_{{\cal D}_1}$,
 where 
\begin{equation}\label{d1}
 {\cal D}_1:=\big\{H\in\S^n\mid (P^{\top}HP)_{ij}=0,\,\mbox{for all}\,i,j\,\mbox{not in the same}\, \chi_t^l\,\mbox{and not in the same}\,\gamma_s^l \big\}   
\end{equation}
 with the index sets $\gamma^l_s$ for $s\in \{1,\ldots,u^l\}$ taken from \eqref{eq:partonY}.
 Moreover, it follows from Remark \ref{crit_charc} that ${\cal C}_2^k\subset
 {\cal D}_2:=\big\{H\in\S^n\mid (P^{\top}HP)_{ij}=0,\,\mbox{for all}\,i,j\in\chi_t^l\,\mbox{and}\,i,j\,\mbox{not in the same}\, \gamma_s^l\big\}$; see Figure~\ref{fig2} in which the blocks of $P^\top HP$
 that are zero for a matrix $H$ in  ${\cal D}_1$ or $ {\cal D}_2$ are hatched.
 \begin{figure*}[h!]
\centering 
\subfigure[for $H$ in ${\cal D}_1$]{\includegraphics[width=.45\linewidth]{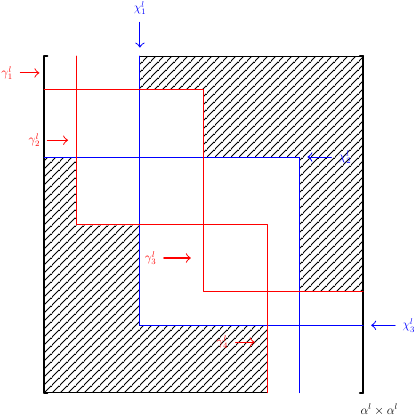}}
\subfigure[for $H$ in ${\cal D}_2$ ]{\includegraphics[width=.45\linewidth]{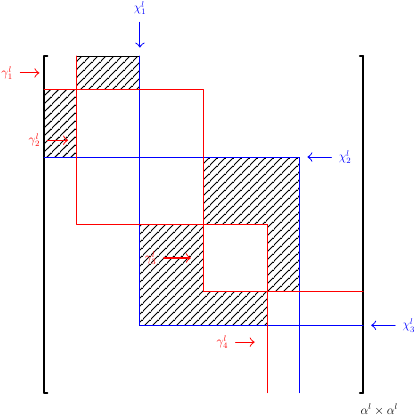}}
\caption{$\al^l\times \al^l$ block of $P^\top HP$}
\label{fig2}
\end{figure*}

Thus, we get  
 $$
 {\cal D}_1\cap {\cal D}_2 =\big\{H\in\S^n\mid (P^{\top}HP)_{\gamma^l_s\gamma^l_{s'}}=0\; \mbox{for all}\; l=1\dots,r \;\mbox{and}\; s\neq s'\big\}.
 $$ 
Because $f_2^k\xrightarrow{e} \dd_{{\cal D}_1}$ and $\liminf_{k\to \infty}f_2^k(H^k)<\infty$, it follows from \cite[Proposition~7.2]{rw} that 
$\dd_{{\cal D}_1}(\widetilde H)\le \liminf_{k\to \infty}f_2^k(H^k)$, implying that
$\widetilde H\in {\cal D}_1$. Moreover, since $H^k\in {\cal C}_2^k\subset{\cal D}_2$, we get   $\widetilde H\in {\cal D}_1\cap{\cal D}_2$. This implies that condition (a) in Lemma \ref{lemma:characofcc} holds for the matrix $\widetilde H$. 

To verify condition (c)  in Lemma \ref{lemma:characofcc} for the matrix $\widetilde H$, we need to use Lemma~\ref{lemma:characofcc}(c) for  $H^k\in {\cal C}_{g}(X^k,Y^k)$. That requires further partitioning of the index sets 
$\gamma_s^l$. To do so, define the index sets $\beta^l_s(t)$, $t\in \{1,\dots,v^{l,s}\}$ with $v^{l,s}\in \N$, to further partition the index set $\gamma_s^l$ from Remark~\ref{part}(P3) based on $x^k=\lambda(X^k)$ as
\begin{equation}\label{eq:partonXpY2}
\begin{cases}
\lm_i(X^k)=\lm_j(X^k),&{\rm if}\; i,j\in\beta^l_s(t)\;{\rm and}\;t\in\{1,\dots,v^{l,s}\},\\
\lm_i(X^k)>\lm_j(X^k),&{\rm if}\, i\in\beta^l_s(t), j\in\beta^l_s(t')\,{\rm and}\,t,t'\in\{1,\dots,v^{l,s}\}\,{\rm with}\,t<t'.
\end{cases}
\end{equation}
We can conclude from \eqref{eq:equconst} and the definition of $\lm(X^k)$ that for any  $s\in{\cal E}^l$, we must have $v^{l,s}=1$,
 which means  that  $\gamma_s^l$ will not be further partitioned by $\lm(X^k)$ whenever $s\in{\cal E}^l$. Since $\widetilde H\in {\cal D}_1\cap {\cal D}_2$,   each $(P^{\top} \widetilde HP)_{\chi^l_t\chi^l_t}$ is a block diagonal matrix, whose   diagonal blocks are composed of some $(P^{\top} \widetilde HP)_{\gamma^l_s\gamma^l_s}$; see Figure~\ref{fig2}.  
Since $H^k\in {\cal C}_{g}(X^k,Y^k)$, we know from Lemma \ref{lemma:characofcc}(c) that for all $s\in{\cal E}^l$,  $(P^{\top}H^k P)_{\gamma^l_s\gamma^l_s}=\rho_s^{l}(k)I_{\left|\gamma_s^l\right|}$ for some $\rho_s^{l}(k)\in\R$.
Since the sequence $\{H^k\b$ is convergent, so is the sequence $\{\rho_s^{l}(k)\b$.  
 Passing to the limit shows the validity of condition (c)  in Lemma \ref{lemma:characofcc} for the matrix $\widetilde H$.
We now claim that 
\begin{equation}\label{eq:CCk2}
\begin{aligned}
{\cal C}_1^k=\left\{ H\in\S^n\Big | \begin{array}{l}
\forall\,\nu,\nu'\in\iota_1^k,\,\mu\in\iota_2^k, \langle\mbox{diag}(P^{\top}HP),{a}^\nu-{a}^{\nu'}\rangle=0,\;
\langle\mbox{diag}(P^{\top}HP),{b}^{\mu}\rangle=0
\end{array}\right\}:={\cal C}.
\end{aligned}
\end{equation}
To justify it, set
$${\cal E}^l_1=\big\{k\in{\cal E}^l\mid \exists\, i, j \in \gamma_k^l \text { such that }\left({a}^\nu\right)_i \neq\left({a}^\nu\right)_j \text { for some } (\nu,\mu) \in \overline{\eta}\big\}.$$ 
For all $(\nu,\mu)\in\overline{\eta}$, we have from \eqref{eq:egdd} and $H^k\in{\cal C}_g(X^k,Y^k)$ that 
\begin{equation}\label{eq:S28P4}
\begin{aligned}
\langle\lambda'(X^k,H^k),{a}^\nu\rangle&=\sum_{l=1}^r\sum_{s=1}^{u^l}\sum_{t=1}^{v^{l,s}}\langle \lambda(P^{\top}_{\beta^l_s(t)}H^kP_{\beta^l_s(t)}),(a^\nu)_{\beta^l_s(t)}\rangle\\
&=\sum_{l=1}^r\big(\sum_{s\in{\cal E}^l_1}\sum_{t=1}^{v^{l,s}}\langle \lambda(P^{\top}_{\beta^l_s(t)}H^kP_{\beta^l_s(t)}),(a^\nu)_{\beta^l_s(t)}\rangle+\sum_{s\notin{\cal E}^l_1}\sum_{t=1}^{v^{l,s}}\langle \lambda(P^{\top}_{\beta^l_s(t)}H^kP_{\beta^l_s(t)}),(a^\nu)_{\beta^l_s(t)}\rangle\big)\\
&=\sum_{l=1}^r\big(\sum_{s\in{\cal E}^l_1}\langle \lambda(P^{\top}_{\gamma^l_s}H^kP_{\gamma^l_s}),(a^\nu)_{\gamma^l_s}\rangle+\sum_{s\notin{\cal E}^l_1}\sum_{t=1}^{v^{l,s}}\langle \lambda(P^{\top}_{\beta^l_s(t)}H^kP_{\beta^l_s(t)}),(a^\nu)_{\beta^l_s(t)}\rangle\big)\\
&=\sum_{l=1}^r\big(\sum_{s\in{\cal E}^l_1}\langle \rho_s^l{\bf 1}_{|\gamma_s^l|},(a^\nu)_{\gamma^l_s}\rangle+\sum_{s\notin{\cal E}^l_1}\sum_{t=1}^{v^{l,s}}\langle P^{\top}_{\beta^l_s(t)}H^kP_{\beta^l_s(t)},\tilde{\rho}_s^l{\bf 1}_{\beta^l_s(t)}\rangle\big)\\
&=\langle \mbox{diag}(P^{\top}H^kP),{a}^\nu\rangle, 
\end{aligned}\end{equation}
where $\rho_s^l$ and $\tilde{\rho}_s^l$ are some constants in $\R$. 
Similarly, we can   prove for all $(\nu,\mu)\in\overline{\eta}$ that 
\begin{equation}\label{eq:S28P4b}\langle\lambda'(X^k,H^k),{b}^\mu\rangle=\langle \mbox{diag}(P^{\top}H^kP),{b}^\mu\rangle. \end{equation}
Combining this, \eqref{eq:S28P4}, and \eqref{eq:CCk}
confirms \eqref{eq:CCk2}. Recall that $H^k\in {\cal C}_1^k={\cal C}$ for all $k$. Therefore, $\widetilde H\in {\cal C}$. Appealing now to Lemma~\ref{lemma:charofaffcc}
tells us that $\widetilde H\in \mbox{aff}\,{\cal C}_{g}({X},{Y})$ and so the right-hand side of \eqref{epi_limit} becomes zero. This proves \eqref{epi_limit}. 

\vskip 5pt
\noindent{\em Step 2.2: We have $f_2^k+\delta_{{\cal C}_{g}(X^k,Y^k)}\xrightarrow{e} \delta_{{\rm aff}\,{\cal C}_{g}(X,Y)}$.}

To prove this claim, we begin by showing that for any $H\in\S^n$, there exists  $H^k\rightarrow H$ such that 
\begin{equation}\label{epi_lim2}
\limsup\limits_{k\rightarrow\infty}(f_2^k+\delta_{{\cal C}_{g}(X^k,Y^k)})(H^k)\leq\delta_{{\rm aff}\,{\cal C}_{g}(X,Y)}(H).
\end{equation}
For any $H\notin{\rm aff}\,{\cal C}_{g}(X,Y)$, this inequality trivially holds. 
Pick any $H\in{\rm aff}\,{\cal C}_{g}(X,Y)$ and let $H^k=H$. Then we have 
$$\big(f_2^k+\delta_{{\cal C}_{g}(X^k,Y^k)}\big)(H)=f_2^k(H)+\delta_{{\cal C}_1^k}(H)+\delta_{{\cal C}_2^k}(H).$$
It follows from Lemma \ref{lemma:characofcc}(a) that $(P^{\top}HP)_{ij}=0$ for all $i,j$ not in the same $\gamma_s^l$, which implies $f_2^k(H)=0$. 
We know from \eqref{eq:affcha=01}, Lemma \ref{lemma:characofcc}(c) and a similar argument as \eqref{eq:S28P4} that $H\in {{\cal C}_1^k}$. Since $\lambda(Y)=\displaystyle\sum_{\nu}w_\nu a^\nu+\sum_{\mu}u_\mu b^\mu$ with $\displaystyle\sum_{\nu}w_\nu=1$, $w_\nu>0$, $u_\mu>0$ with $(\nu,\mu)\in\overline{\eta}$, we know again from \eqref{eq:egdd}, \eqref{eq:S28P4} and \eqref{eq:S28P4b} that as $k\rightarrow\infty$,
$$
\begin{aligned}
 \sum_{l=1}^{r}\sum_{t=1}^{z^l}\langle \Lm(P_{\chi^l_t}^{\top}HP_{\chi^l_t}),\Lambda(Y)_{\chi^l_t\chi^l_t}\rangle &=\langle \lambda'(X^k,H),\lambda(Y)\rangle=\langle \mbox{diag}(P^{\top}HP),\lambda(Y)\rangle\\
 &=\langle P^{\top}HP,\Lambda(Y)\rangle=\sum_{l=1}^{r}\sum_{t=1}^{z^l}\langle \Lambda(Y)_{\chi^l_t\chi^l_t},P_{\chi^l_t}^{\top}HP_{\chi^l_t}\rangle. 
\end{aligned}
$$
This, together with Fan's inequality \cite{Fan49}, shows that  $\Lambda(Y)_{\chi_t^l\chi_t^l}$ and $P^{\top}_{\chi_t^l}HP_{\chi_t^l}$ have a simultaneous ordered spectral decomposition, which implies $H\in {{\cal C}_2^k}$. Therefore, for any $H\in{\rm aff}\,{\cal C}_{g}(X,Y)$, $\limsup\limits_{k\rightarrow\infty}(f_2^k+\delta_{{\cal C}_{g}(X^k,Y^k)})(H^k)=0=\delta_{{\rm aff}\,{\cal C}_{g}(X,Y)}(H)$, which confirms \eqref{epi_lim2}. Combining \cite[Proposition~7.2]{rw} with the inequalities in \eqref{epi_limit} and \eqref{epi_lim2} demonstrates that $f_2^k+\delta_{{\cal C}_{g}(X^k,Y^k)}\xrightarrow{e} \delta_{{\rm aff}\,{\cal C}_{g}(X,Y)}$.

To finish the proof, recall that  $f_1^k$ converges continuously to $\varUpsilon_{X,Y}$.
Therefore, we obtain from \cite[Theorem 7.46(b)]{rw} and the observation in Step 2.2 that $\d^2g(X^k, Y^k)=f_1^k+f_2^k+\delta_{{\cal C}_{g}(X^k,Y^k)}$ epi-converges to $\varUpsilon_{X,Y}+\delta_{\mbox{aff}\,{\cal C}_{g}({X},{Y})}$,
	which completes the proof. 
\end{proof}
% /////////////////////////////

Next, we are going to show that the GQF in \eqref{eq:qeqsgF} is a minimal element in the quadratic bundle of the spectral function $g$ from \eqref{spec}, which is the main result of this section. 

\begin{Theorem}\label{prop:minquadbdcha} 
Suppose that $({X}, {Y}) \in \operatorname{gph} \partial g$ and $X$ has the eigenvalue decomposition \eqref{eq:eig-decomp1} with $P\in {\bf O}^n({X}) \cap {\bf O}^n({Y})$. 
Take the set $K(X,Y)$ from \eqref{eq:defintset} and assume that $K(X,Y)\neq\varnothing$.  
Then,   
the minimal element in  ${\rm quad}\,g(X,{Y})$ is $\varUpsilon_{X,Y}+\delta_{{\rm aff}\,{\cal C}_g(X,Y)}$, where 
$\varUpsilon_{X,Y}$ is taken from \eqref{eq:qeqsgF}. 
\end{Theorem}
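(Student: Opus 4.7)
The plan is to rely on Proposition~\ref{quad_minimal} for the membership $q:=\varUpsilon_{X,Y}+\delta_{\mbox{aff}\,{\cal C}_g(X,Y)}\in \quadr g(X,Y)$ and then to prove minimality, i.e., $\widetilde q(H)\ge q(H)$ for every $H\in\S^n$ and every $\widetilde q\in \quadr g(X,Y)$. Fix such a $\widetilde q$ and, via Definition~\ref{def:quadbddefi}, choose $(X^k,Y^k)\in\gph\sub g$ converging to $(X,Y)$ with $\d^2 g(X^k,Y^k)\xrightarrow{e}\widetilde q$ and each $\d^2 g(X^k,Y^k)$ a GQF; Proposition~\ref{sec_subd}(b) then gives $Y^k\in\ri \sub g(X^k)$. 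Pick $P_k\in {\bf O}^n(X^k)\cap {\bf O}^n(Y^k)$ and, by compactness of ${\bf O}^n$ and continuity of eigenvalues, extract a subsequence so that $P_k\to \widehat P\in {\bf O}^n(X)\cap {\bf O}^n(Y)$; we work throughout with $P=\widehat P$. The inequality is trivial when $H\notin \dom \widetilde q$, so fix $H\in \dom\widetilde q$ and, by epi-convergence, produce a recovery sequence $H^k\to H$ with $\d^2 g(X^k,Y^k)(H^k)\to \widetilde q(H)$. For $k$ large we must have $H^k\in {\cal C}_g(X^k,Y^k)$ (otherwise the values explode), and Proposition~\ref{sec_subd}(a) gives $\d^2 g(X^k,Y^k)(H^k)=\varUpsilon_{X^k,Y^k}(H^k)$.

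The core calculation is a splitting
$\varUpsilon_{X^k,Y^k}(H^k)=S_1^k(H^k)+S_2^k(H^k),$
in which $S_1^k$ collects the pairs $(i,j)$ belonging to different blocks $\alpha^l,\alpha^{l'}$ of the partition~\eqref{eq:def-alpha} attached to $X$, while $S_2^k$ collects the pairs $(i,j)$ lying inside a common $\alpha^l$ but in distinct sub-blocks $\chi^l_t,\chi^l_{t'}$ of Remark~\ref{part}(P2). For every $(i,j)$ contributing to $S_1^k$ the denominator $\lambda_i(X^k)-\lambda_j(X^k)$ stays bounded away from $0$, and together with $H^k\to H$, $P_k\to P$, and $Y^k\to Y$ this gives $S_1^k(H^k)\to \varUpsilon_{X,Y}(H)$. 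For every $(i,j)$ in $S_2^k$, since both eigenvalue vectors are nonincreasing and $i<j$ (as $\chi^l_t,\chi^l_{t'}\subset \alpha^l$ are ordered), we have $\lambda_i(X^k)\ge \lambda_j(X^k)$ and $\lambda_i(Y^k)\ge \lambda_j(Y^k)$, hence each term is nonnegative. Passing to the limit then yields $\widetilde q(H)=\lim_k \varUpsilon_{X^k,Y^k}(H^k)\ge \varUpsilon_{X,Y}(H)$.

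It remains to establish $H\in \mbox{aff}\,{\cal C}_g(X,Y)$, which in combination with the previous paragraph gives $\widetilde q(H)\ge \varUpsilon_{X,Y}(H)+\delta_{\mbox{aff}\,{\cal C}_g(X,Y)}(H)=q(H)$. The idea is to apply Lemma~\ref{lemma:characofcc} to each $H^k\in {\cal C}_g(X^k,Y^k)$, obtaining: (a) the block-diagonal structure of $P_k^\top H^k P_k$ with respect to the partition $\{\gamma^{l,k}_s\}$ induced by $\lambda(Y^k)$; (b) the subgradient identities driven by $\eta(\lambda(X^k),\lambda(Y^k))$; and (c) the scalarity $\rho^{l,k}_s I$ of $(P_k^\top H^k P_k)_{\gamma^{l,k}_s\gamma^{l,k}_s}$ on indices in ${\cal E}^{l,k}$. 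Passing to the limit in (a)--(c) and reading off the characterization of Lemma~\ref{lemma:charofaffcc} then places $H$ in $\mbox{aff}\,{\cal C}_g(X,Y)$. This last passage is the main obstacle: the partitions $\alpha^{l,k},\gamma^{l,k}_s$ and the index sets $\iota_1,\iota_2,\eta$ at $(X^k,Y^k)$ generally refine or are subsets of those at $(X,Y)$, and one must argue that (i) the scalarity constants $\rho^{l,k}_s$ on the finer blocks collapse to scalars on the (coarser) limit blocks $\gamma^l_s$ when $s\in{\cal E}^l$, which will require invoking the symmetry-invariance in Proposition~\ref{cor:inv} together with the assumed membership $Y^k\in\ri\sub g(X^k)$, and (ii) the linear identities at stage $k$ survive to the equalities \eqref{eq:affcha=01}--\eqref{eq:affcha=02} at $(X,Y)$, for which the key point is that active index sets are semicontinuous under $(X^k,Y^k)\to (X,Y)$ so that the relevant conditions indexed by $\eta(\lambda(X),\lambda(Y))$ are automatically present in the $k$-th condition.
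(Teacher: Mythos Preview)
Your overall strategy matches the paper's: membership via Proposition~\ref{quad_minimal}, then minimality by splitting $\varUpsilon_{X^k,Y^k}(H^k)=S_1^k+S_2^k$, passing to the limit in $S_1^k$, and discarding the nonnegative $S_2^k$. That part is fine. The genuine gap is in your final step, where you claim that passing to the limit in the three conditions (a)--(c) of Lemma~\ref{lemma:characofcc} for $H^k$ yields the characterization of $\mbox{aff}\,{\cal C}_g(X,Y)$ in Lemma~\ref{lemma:charofaffcc} for $H$.

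The problem is with condition (a). Lemma~\ref{lemma:characofcc}(a) applied at $(X^k,Y^k)$ only controls the entries of $P_k^\top H^kP_k$ \emph{within} each block $\chi^l_t$ of the (finer) partition of $\alpha^l$ induced by $\lambda(X^k)$; it says nothing about the off-diagonal blocks $\chi^l_t\times\chi^l_{t'}$ for $t\neq t'$. To conclude that $(P^\top H P)_{\alpha^l\alpha^l}$ is block-diagonal with respect to the $\{\gamma^l_s\}$, you must kill those cross-$\chi$ entries, and this requires two separate mechanisms that are absent from your sketch. First, the finiteness of $\widetilde q(H)$ forces $S_2^k(H^k)$ to remain bounded, and since the denominators $\lambda_i(X^k)-\lambda_j(X^k)$ vanish for $i,j$ in different $\chi^l_t$, this kills $(P^\top H P)_{ij}$ whenever $i,j$ lie in different $\chi^l_t$ \emph{and} different $\gamma^l_s$. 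Second, for $i,j$ in the \emph{same} $\chi^l_t$ but different $\gamma^l_s$, one must extract, from the first-order information $H^k\in{\cal C}_g(X^k,Y^k)$ and convexity of $\theta$, an equality case in Fan's inequality yielding a simultaneous ordered spectral decomposition of $\Lambda(Y)_{\chi^l_t\chi^l_t}$ and $(P^\top HP)_{\chi^l_t\chi^l_t}$; only then does Remark~\ref{crit_charc}(ii)$'$ give the remaining zeros. Your proposal invokes neither the bounded-$S_2^k$ argument nor Fan's inequality here.

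Condition (c) also needs more than you indicate. The key fact is that $w^k:=\lambda(X^k)-\lambda(X)$ itself satisfies the hypotheses of Lemma~\ref{lem:wIdforg} (because $\iota_j(\lambda(X^k))\supset \eta_j(\lambda(X),\lambda(Y))$ and the defining equalities for $\iota_j(\lambda(X^k))$ combine to give $\langle w^k,a^\nu-a^{\nu'}\rangle=0$, $\langle w^k,b^\mu\rangle=0$). Hence $w^k_{\gamma^l_s}$ is a constant vector for every $s\in{\cal E}^l$, which means $\gamma^l_s$ is \emph{not} further split by $\lambda(X^k)$. This is what lets you transfer the scalarity from the $(X^k,Y^k)$-level index sets back to the fixed $\gamma^l_s$ with $s\in{\cal E}^l$; the appeal to Proposition~\ref{cor:inv} alone does not supply this non-splitting statement.
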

\begin{proof} Suppose  $(X,Y)\in \gph \sub g$. 
We define  the function $\Xi_{X,Y}:\S^n\to \oR$ for any $H\in \S^n$ by 
\begin{equation*}
\Xi_{X,Y}(H):= \liminf_{\substack{
(X', Y') \to (X,Y)\\
 Y'\in \ri \sub g(X'), \; H'\to H}} 
\d^2 g(X' , Y')(H').
\end{equation*}
We proceed with dividing the proof into three steps.
\vskip 5pt
\noindent{\em Step 1. For any $H\in \S^n$, we always have  $\Xi_{X,Y}(H)\leq\varUpsilon_{X,Y}\big(H\big)+\dd_{{\rm aff}\,{\cal C}_{g}(X,Y)}(H)$.}

 To prove it, it follows from Proposition~\ref{sec_subd}(b) that 
$\d^2g(X,Y)$ is a GQF if and only if $Y\in \ri \sub g(X)$.
Taking $q\in{\rm quad}\,g(X,{Y})$, we find  $\{(X^k,Y^k)\b\subset \gph \sub g$ such that $Y^k\in \ri \sub g(X^k)$ and $\d^2 g(X^k,Y^k)\xrightarrow{e} q$. By \cite[Proposition~7.2]{rw}, for any $H\in \S^n$, there is a sequence $H^k\to H$ such that $\d^2 g(X^k,Y^k)(H^k)\to q(H)$. 
This clearly tells us that $\Xi_{X,Y}(H)\leq q(H)$ for any $H\in \S^n$.
According to  Proposition~\ref{quad_minimal}, $\varUpsilon_{X,Y}+\delta_{{\rm aff}\,{\cal C}_g(X,Y)}\in {\rm quad}\,g(X,{Y})$. Combining these confirms our first claim. 

To proceed with the next step, take $H\in \S^n$. There are sequences  $\{(X^k,Y^k)\b$,  $\{H^k\b$
such that 
\begin{equation}\label{eq:dsfinite}
\begin{cases}
(X^k,Y^k)\to (X,Y)\;\;\mbox{with}\;\; Y^k\in \ri \sub g(X^k),\; H^k\rightarrow H\\
\mbox{satisfying}\;\; \d^2g(X^k,Y^k)(H^k)\to \Xi_{X,Y}(H).    
\end{cases}    
\end{equation}
Thus, we get  $\Lambda(X)=\lim_{k\rightarrow\infty}\Lambda(X^k)$ and $\Lambda(Y)=\lim_{k\rightarrow\infty}\Lambda(Y^k)$. Since $\{P^k\b$ is uniformly bounded, we  may assume by taking a subsequence, if necessary,  that
$\{P^k\b$ converges to an orthogonal matrix  $\widehat{P}\in{\bf O}^n({X}) \cap {\bf O}^n({Y})$. As stated in \cite{SSun03} and essentially proved in the derivation of \cite[Lemma 4.12]{SSun02}, we can write  $\widehat{P}=PQ$, where 
$$
Q\in{\cal D}:=\Big\{Q\in{\bf O}^n\mid Q={\rm Diag}(Q_1^1,\dots,Q_l^{u^l})\;\;\mbox{with}\;\; Q_t^l\in{\bf O}^{|\gamma_t^l|}\Big\}.
$$  
Take the index sets $\alpha^l$  with $l=1,\ldots,r$ from Remark~\ref{part}(P1).
Moreover, for any $l\in \{1,\ldots,r\}$, take the index sets $\{\chi_t^l\}_{t=1}^{z^l}$ from Remark~\ref{part}(P2).

\vskip 5pt
\noindent{\em Step 2. For any $H\in\S^n$, we always have 
\begin{equation}\label{eq:woutdom}
\Xi_{X,Y}(H)\geq\varUpsilon_{X,Y}\big(H\big)+\delta_{{\rm aff}{\cal C}_{\theta}(\lambda(X),\lambda(Y))}(w), 
\end{equation}
where \begin{equation}\label{eq:woutdom2}w:=\left(\lambda((Q^{\top}P^{\top}HPQ)_{\chi^1_1\chi^1_1}),\dots,\lambda((Q^{\top}P^{\top}HPQ)_{\chi^r_{u^r}\chi^r_{u^r}})\right)\end{equation} with $Q\in{\cal D}$. Consequently, if $\Xi_{X,Y}(H)<\infty$, then $w\in {\rm aff}{\cal C}_{\theta}(\lambda(X),\lambda(Y))$.}

To justify this claim, take $H\in \S^n$ and the   sequences  $\{(X^k,Y^k)\b$ and  $\{H^k\b$ satisfying \eqref{eq:dsfinite}. 
It follows from \cite[Proposition 5.3]{MSarabi23} that 
{\small{
\begin{equation}\label{eq:21P5.3}
\mathrm{d}^2 g(X^k, Y^k)(H^k) \geq \mathrm{d}^2 \theta(\lambda(X^k), \lambda(Y^k))\left(\lambda'(X^k ; H^k)\right)+2 \sum_{l=1}^r\sum_{t=1}^{z^l}\left\langle\Lambda(Y^k)_{\chi_t^l \chi_t^l}, (P^k_{\chi_t^l})^{\top} H^k\left(\mu_l^k I-X^k\right)^{\dagger} H^k P^k_{\chi_t^l}\right\rangle,
\end{equation}}}
where $z^l$ is taken from \eqref{eq:partonxk}. 
It can be checked directly that 
\begin{equation}\label{eq:sigtgeq}
    \begin{aligned}
    & 2\sum_{l=1}^r\sum_{t=1}^{z^l}\left\langle\Lambda(Y^k)_{\chi_t^l \chi_t^l}, (P^k_{\chi_t^l})^{\top} H^k\left(\mu_l^k I-X^k\right)^{\dagger} H^k P^k_{\chi_t^l}\right\rangle\\
    &=2\sum\limits_{1\leq l< l'\leq r}\sum\limits_{i\in\alpha^l}\sum\limits_{j\in\alpha^{l'}}\frac{\lambda_i({Y}^k)-\lambda_j({Y}^k)}{\lambda_i(X^k)-\lambda_j(X^k)}(({P}^k)^{\top}H^k{P}^k)^2_{ij}\\
&+2\sum\limits_{1\leq l\leq r}\sum\limits_{1\leq t<t'\leq z^l}\sum\limits_{i\in\chi_t^l}\sum\limits_{j\in\chi_{t'}^l}\frac{\lambda_i({Y}^k)-\lambda_j({Y}^k)}{\lambda_i(X^k)-\lambda_j(X^k)}(({P}^k)^{\top}H^k{P}^k)^2_{ij}\\
&\geq 2\sum\limits_{1\leq l< l'\leq r}\sum\limits_{i\in\alpha^l}\sum\limits_{j\in\alpha^{l'}}\frac{\lambda_i({Y}^k)-\lambda_j({Y}^k)}{\lambda_i(X^k)-\lambda_j(X^k)}(({P}^k)^{\top}H^k{P}^k)^2_{ij},
    \end{aligned}
\end{equation}
where the last inequality follows from   
$$
\frac{\lambda_i({Y}^k)-\lambda_j({Y}^k)}{\lambda_i(X^k)-\lambda_j(X^k)} \geq0\quad \mbox{for all}\;\; i,j\in\{1,\dots,n\}.
$$
Passing to the limit  brings us to  
\begin{equation}\label{eq:sigtgeq2}\liminf_{k\rightarrow\infty}2\sum_{l=1}^r\sum_{t=1}^{z^l}\left\langle\Lambda(Y^k)_{\chi_t^l \chi_t^l}, (P^k_{\chi_t^l})^{\top} H^k\left(\mu_l^k I-X^k\right)^{\dagger} H^k P^k_{\chi_t^l}\right\rangle\geq\varUpsilon_{X,Y}\big(H\big).\end{equation}
On the other hand, since $Y^k\in\mbox{ri}\,\partial g(X^k)$, we know from Lemma    \ref{crit_gqf} that  
${\cal C}_{g}(X^k,Y^k)$ has a representation of the form 
$${\cal C}_{\theta}(\lambda(X^k),\lambda(Y^k))=\left\{d\in\R^n\Bigg | \begin{array}{l}
\langle d,{a}^{\nu'}-{a}^\nu\rangle=0,\;\mbox{if}\;\nu,\nu'\in{\iota}_1^k,\\
\langle d,{b}^\mu\rangle=0,\;\mbox{if}\;\mu\in{\iota}_2^k,\\
\end{array}\right\}.$$
It results from \cite[Proposition~3.3]{hjs} that there are faces $K_1$ and $K_2$ of the critical cone ${\cal C}_{\theta}(\lambda(X),\lambda(Y))$ for which we have 
$$
\mathrm{d}^2 \theta(\lambda(X^k), \lambda(Y^k))=\dd_{{\cal C}_{\theta}(\lambda(X^k),\lambda(Y^k))}=\dd_{K_1-K_2}\ge \dd_{{\rm aff}{\cal C}_{\theta}(\lambda(X),\lambda(Y))},
$$
where the last inequality comes from \cite[Corollary~3.4]{hjs}. 
Observe from \eqref{eq:egdd} %\cite[Proposition 1.4]{Torki} 
that  
$$
 \lambda' ( X^k ; H^k )=\Big(\lambda \big((P^k_{\chi^1_1}\big)^{\top} H^k P^k_{\chi^1_1}\big),\ldots,\lambda \big((P^k_{\chi^r_{u^r}}\big)^{\top} H^k P^k_{\chi^r_{u^r}}\big) \Big)^{\top}, 
 $$
which leads us to $\lambda'(X^k,H^k)\rightarrow w$ with $w$ taken from \eqref{eq:woutdom2}. If $\lm'(X^k ; H^k)\in
{\cal C}_{\theta}(\lambda(X^k),\lambda(Y^k))\subset {\rm aff}{\cal C}_{\theta}(\lambda(X),\lambda(Y))$ for infinitely many $k$, we 
can conclude   that $w\in {\rm aff}{\cal C}_{\theta}(\lambda(X),\lambda(Y))$ and that 
$$
\liminf_{k\rightarrow\infty}\mathrm{d}^2 \theta(\lambda(X^k), \lambda(Y^k))\big(\lambda'(X^k ; H^k)\big)=0\ge 0=\dd_{{\rm aff}{\cal C}_{\theta}(\lambda(X),\lambda(Y))}(w).
$$
Otherwise, $\lm'(X^k ; H^k)\in
{\cal C}_{\theta}(\lambda(X^k),\lambda(Y^k))$ for only finitely many $k$, which tells us that 
$$
\liminf_{k\rightarrow\infty}\mathrm{d}^2 \theta(\lambda(X^k), \lambda(Y^k))\big(\lambda'(X^k ; H^k)\big)=\infty\ge \dd_{{\rm aff}{\cal C}_{\theta}(\lambda(X),\lambda(Y))}(w).
$$
Combining these with \eqref{eq:21P5.3} and  \eqref{eq:sigtgeq2} leads us to  \eqref{eq:woutdom}.

\vskip 5pt
\noindent{\em Step 3. We have $\dom \Xi_{X,Y} ={\rm aff}\,{\cal C}_{g}(X,Y)$. } 

Observe first that the inclusion ${\rm aff}\,{\cal C}_{g}(X,Y)\subset \dom \Xi_{X,Y}$ falls immediately from the observation in Step 1. To prove the opposite inclusion, pick   $H\in \dom \Xi_{X,Y}$. We proceed to show that 
$H$ satisfies the characterization of ${\rm aff}\,{\cal C}_{g}(X,Y)$, obtained in Lemma~\ref{lemma:charofaffcc}.  We begin with proving that $H$ satisfies condition (a) in Lemma~\ref{lemma:charofaffcc}. 
It follows  from \eqref{eq:dsfinite}  that for any $k$ sufficiently large, 
\begin{equation}\label{eq:finite}
\d^2 g(X^k, Y^k)(H^k)\leq M:=\Xi_{X,Y}(H)+1<\infty. 
\end{equation}
For each $k$, pick the sequences $t_{k_m}\searrow 0$ and $H^{k_m}\rightarrow H^k$ such that 
\begin{equation}\label{eq:finite2}
\d^2 g(X^k, Y^k)(H^k)=\lim_{m\rightarrow\infty}\Delta_{t_{k_m}}^2g(X^k,Y^k)(H^{k_m}).
\end{equation}
We know from  \cite[Proposition 1.4]{Torki} that
$$
\lambda(X^k+t_{k_m}H^{k_m})=\lm(X^k)+t_{k_m}\lambda'(X^k,H^{k_m})+o(t_{k_m}).
$$
By the convexity of $\theta$ and the fact that $\lm(Y^k)\in \sub \th(\lm(X^k))$, we have 
$$
\begin{aligned}
\frac{g(X^k+t_{k_m}H^{k_m})-g(X^k)}{t_{k_m}}&=\frac{\theta(\lambda(X^k)+t_{k_m}\lambda'(X^k,H^{k_m})+o(t_{k_m}))-g(X^k)}{t_{k_m}}\\
&\ge \la \lm(Y^k), \lambda'(X^k, H^{k_m})\ra+ o(t_{k_m})/t_{k_m}\\
&=\sum_{l=1}^{r}\sum_{t=1}^{z^l}\langle \Lambda(Y^k)_{\chi^l_t\chi^l_t},\Lambda((P^k)^{\top}H^{k_m}P^k)_{\chi^l_t\chi^l_t}\rangle+o(t_{k_m})/t_{k_m}.
\end{aligned}$$ 
It follows from \eqref{eq:finite} and \eqref{eq:finite2} that for each sufficiently large $k$ and $m$,  
\begin{equation*}\label{eq:cuiding22inv}
\frac{g(X^k+t_{k_m}H^{k_m})-g(X^k)}{t_{k_m}}\leq t_{k_m}(M+1)+\langle Y^k,H^{k_m}\rangle.
\end{equation*}
Combining the estimates above leads us to 
$$t_{k_m}(M+1)+\langle Y^k,H^{k_m}\rangle\geq\sum_{l=1}^{r}\sum_{t=1}^{z^l}\langle \Lambda(Y^k)_{\chi^l_t\chi^l_t},\Lambda((P^k)^{\top}H^{k_m}P^k)_{\chi^l_t\chi^l_t}\rangle+o(t_{k_m})/t_{k_m}.$$
Letting first $m\rightarrow\infty$  and  then $k\rightarrow\infty$ brings us to 
$$\langle Y,H\rangle\geq\sum_{l=1}^{r}\sum_{t=1}^{z^l}\langle \Lambda(Y)_{\chi^l_t\chi^l_t},\Lm\big(((PQ)^{\top}HPQ)_{\chi^l_t\chi^l_t}\big)\rangle=\langle\lambda(Y),w\rangle,$$
where $w$ comes from \eqref{eq:woutdom2}.
On the other hand, a direct application of Fan's inequality, coupled with $PQ=\widehat{P}\in{\cal O}^{n}(X)\cap {\cal O}^{n}(Y)$,  ensures that  
$$
\begin{aligned}
\langle Y,H\rangle &= \langle \Lm(Y),\Hat{P}H\Hat{P}\rangle= \sum_{l=1}^{r}\sum_{t=1}^{z^l}\langle \Lambda(Y)_{\chi^l_t\chi^l_t},\big((PQ)^{\top}HPQ\big)_{\chi^l_t\chi^l_t}\rangle \\
& \le  \sum_{l=1}^{r}\sum_{t=1}^{z^l}\langle \Lambda(Y)_{\chi^l_t\chi^l_t},\Lm\big(((PQ)^{\top}HPQ)_{\chi^l_t\chi^l_t}\big)\rangle=\langle\lambda(Y),w\rangle.
\end{aligned}
$$
Combining these and using Fan's inequality again, we conclude  that $\Lambda(Y)_{\chi^l_t\chi^l_t}$ and  $\big((PQ)^{\top}HPQ\big)_{\chi^l_t\chi^l_t}$  have a simultaneous ordered spectral decomposition. Using the same argument as the one in Remark~\ref{crit_charc} ensures that $((PQ)^{\top}HPQ)_{ij}=0$
for all $i,j\in\chi_t^l$  and all $i,j$ that are not in the same $\gamma_s^l$;
see Figure~\ref{fig2}(b) for an illustration of such a matrix $H$.
On the other hand, we deduce from  \eqref{eq:sigtgeq} and \eqref{eq:finite}  that 
$$
\begin{aligned}
&\infty>\sum\limits_{1\leq l\leq r}\sum\limits_{1\leq t<t'\leq z^l}\sum\limits_{i\in\chi_t^l}\sum\limits_{j\in\chi_{t'}^l}\frac{\lambda_i({Y}^k)-\lambda_j({Y}^k)}{\lambda_i(X^k)-\lambda_j(X^k)}(({P}^k)^{\top}H^k{P}^k)^2_{ij}\\
&\geq\sum\limits_{1\leq l\leq r}\sum\limits_{1\leq s<s'\leq u^l}\sum\limits_{1\leq t<t'\leq z^l}\sum\limits_{\substack{i\in\gamma_s^l,j\in\gamma_{s'}^l\\
i,j\notin\,\mbox{the same}\,\chi_t^l}}\frac{\lambda_i({Y}^k)-\lambda_j({Y}^k)}{\lambda_i(X^k)-\lambda_j(X^k)}(({P}^k)^{\top}H^k{P}^k)^2_{ij}. 
\end{aligned}$$
Passing to the limit implies that  $((PQ)^{\top}HPQ)_{ij}=0$ for all $i,j$ that are not in the same  $\chi_t^l$ and the same  $\gamma_s^l$; see Figure~\ref{fig2}(a) for an illustration of such a matrix $H$.
Combining these illustrates that 
for each $l \in\{1, \ldots, r\}$, the matrix $(PQ)_{\alpha^l}^{\top} H (PQ)_{\alpha^l}$ has a block diagonal representation in the form 
\begin{equation}\label{eq:verL4.73}
(PQ)_{\alpha^l}^{\top} H (PQ)_{\alpha^l}=\operatorname{Diag}\big((Q^{\top}P^{\top} H PQ)_{\gamma_1^l \gamma_1^l}, \ldots,(Q^{\top}P^{\top} H PQ)_{\gamma_{u^l}^l \gamma_{u^l}^l}\big),
\end{equation}
which confirms that   $H$ satisfies condition (a) in Lemma~\ref{lemma:charofaffcc}.

According to  \cite[Lemma 2.1]{hjs}, we have for any $k$ sufficiently large, 
that 
\begin{equation}\label{inc_ind}
    {\eta}(\lm(X),\lm(Y))\subset \iota_1^k\times\iota_2^k:= \iota_1(\lm(X^k))\times \iota_2(\lm(X^k))\subset \overline{\iota}_1\times\overline{\iota}_2:={\iota}_1(\lm(X))\times{\iota}_2(\lm(X)).
\end{equation} 
Recalling \eqref{eq:def_phi12}, one can directly  check that for all $\nu,\nu'\in\iota_1^k$, 
$\langle \lambda(X),a^\nu-a^{\nu'}\rangle=c_\nu-c_{\nu'}$ and $\langle \lambda(X^k),a^\nu-a^{\nu'}\rangle=c_\nu-c_{\nu'}$, leading us to 
$\langle \lambda(X^k)-\lambda(X),a^\nu-a^{\nu'}\rangle=0$. Similarly, we also have for all $\mu\in\iota_2^k$, $\langle \lambda(X^k)-\lambda(X),b^\mu\rangle=0$. Denote $w^k:=\lambda(X^k)-\lambda(X)$. It follows from Lemma \ref{lem:wIdforg} that    that for all $s\in{\cal E}^l$, $w^k_{\gamma_s^l}=\mu^l_s(k){\bf 1}$ for some $\mu^l_s(k)\in\R$. Similar to \eqref{eq:partonXpY2}, define the index sets $\beta^l_s(t)$, $t\in \{1,\dots,v^{l,s}\}$ with $v^{l,s}\in \N$, to further partition the index set $\gamma_s^l$ from Remark~\ref{part}(P3) based on $\lambda(X^k)$. By the definition of $w^k$ and the observation about $w^k_{\gamma_s^l}$ for all $s\in{\cal E}^l$,  we can conclude that  
$v^{l,s}=1$ for any $s\in{\cal E}^l$. 
This implies that for all $s\in{\cal E}^l$, $\gamma_s^l$ would not further be partitioned by $\lm(X^k)$. 
Since $H^k\in{\cal C}_g(X^k,Y^k)$ and since $Y^k\in\mbox{ri}\,\partial g(X^k)$, we know from Lemma \ref{lemma:characofcc} and Lemma \ref{crit_gqf} that for all $\nu,\nu'\in\iota_1^k$, $\langle \lambda'(X^k,H^k),a^\nu-a^{\nu'}\rangle=0$ and  all $\mu\in\iota_2^k$, $\langle \lambda'(X^k,H^k),b^\mu\rangle=0$. 
For any $l\in \{1,\ldots,r\}$, define the index set 
$${\cal F}^l:=\left\{t\in\{1,\dots,z^l\}\Bigg | \begin{array}{l}
\exists\, i, j \in \chi_t^l \text { such that }\left({a}^\nu\right)_i \neq\left({a}^\nu\right)_j \text { for some } \nu \in \iota_1^k,\\
\mbox{or} \;\left({b}^\mu\right)_i \neq\left({b}^\mu\right)_j \text { for some } \mu \in \iota_2^k
\end{array}\right\}.$$
While  ${\cal F}^l$ depends on $k$ as well,  we can assume by passing to a subsequence, if necessary,  that it remains constant for all $k$. 
A similar argument as that of Lemma \ref{lem:wIdforg} implies for all $t\in{\cal F}^l$ that  $(\lambda'(X^k,H^k))_{\chi^l_t}=\rho^l_t(k){\bf 1}_{|\chi^l_t|}$ for some $\rho^l_t(k)\in\R$. Passing to the limit and using the fact that $\lambda'(X^k,H^k)\to w$ with $w$ taken from \eqref{eq:woutdom2}, we have 
$$
\lm\big((PQ)^{\top}HPQ)_{\chi^l_t\chi^l_t}\big)=w_{\chi^l_t}=\rho^l_t{\bf 1}_{|\chi^l_t|}$$
for some $\rho^l_t\in\R$. This yields   $\big((PQ)^{\top}HPQ\big)_{\chi^l_t\chi^l_t}=\rho_t^lI_{|\chi^l_t|}$ for any $t\in{\cal F}^l$. For any $s\in{\cal E}^l$,  there exists  $t$ 
such that $\gamma_s^l\subset\chi_t^l$, since  $\gamma_s^l$ will not be further partitioned by $\lm(X^k)$ whenever  $s\in{\cal E}^l$. We claim that $t\in{\cal F}^l$. Indeed, if $s\in{\cal E}^l$,  
there exist $i,j\in\gamma_s^l$ such that $(a^\nu)_i\neq(a^{\nu})_j$ or  $(b^\mu)_i\neq(b^{\mu})_j$ for some $(\nu,\mu)\in{\eta}(\lambda(X),\lambda(Y))$. If the former holds, it results from \eqref{inc_ind}
that  there exist $i,j\in\gamma_s^l\subset\chi_t^l$ such that $(a^\nu)_i\neq(a^\nu)_j$. If the latter holds, a similar argument can be used.   By the definition of ${\cal F}^l$, we have   $t\in{\cal F}^l$. 
Therefore, for any $s\in{\cal E}^l$, there exists  $t\in{\cal F}^l$ such that $((PQ)^{\top}HPQ)_{\gamma_s^l\gamma_s^l}=\rho_t^lI_{|\gamma_s^l|}$. This confirms that 
for any $l \in\{1, \ldots, r\}$ and any $s \in \mathcal{E}^l$, there exists a scalar $\rho_s^l$ such that 
\begin{equation}\label{eq:verL4.72}(Q^{\top}P^{\top} H PQ)_{\gamma_s^l \gamma_s^l}=\rho_s^l I_{\left|\gamma_s^l\right|},
\end{equation}
proving that   $H$ satisfies condition (c) in Lemma~\ref{lemma:charofaffcc}.

A similar argument as that of \eqref{eq:S28P4}, coupled with  \eqref{eq:verL4.72}, demonstrates that  for all $(\nu,\mu)\in\eta(\lambda(X),\lambda(Y))$, $\langle w,a^\nu\rangle=\langle\mbox{diag}((PQ)^{\top}HPQ),a^\nu\rangle$ and $\langle w,b^\mu\rangle=\langle\mbox{diag}((PQ)^{\top}HPQ),b^\mu\rangle$, where $w$ is taken from \eqref{eq:woutdom2}. These and the fact that $w\in \mbox{aff}\,C_{\theta}(\lambda(X),\lambda(Y))$ yields 
\begin{equation*}\label{eq:verL4.71}
\mbox{diag}\big((PQ)^{\top}H(PQ)\big)\in\mbox{aff}\,C_{\theta}(\lambda(X),\lambda(Y)), \end{equation*}
which together with  \eqref{eq:verL4.72}, \eqref{eq:verL4.73}, and Lemma~\ref{lemma:charofaffcc} 
shows that $H\in{\rm aff}\,{\cal C}_{g}(X,Y)$.
Consequently, we deduce from Steps 1-3 that   
$$ \Xi_{X,Y}=\varUpsilon_{X,Y}+\delta_{\mbox{aff}\,{\cal C}_g(X,Y)}\in \quadr g(X,Y)$$
is the minimal quadratic bundle of $g$ at $X$ for $Y$,  which  completes the proof. 
\end{proof}

We should add here that 
 \cite[Example 3]{r23} also provides the explicit form of the quadratic bundles for the indicator function of the second-order cone. As shown in \cite[page 3]{SSun08}, the second-order cone can be written as a spectral function with representation \eqref{spec} under the Euclidean Jordan algebra. Thus, we can also obtain the result presented in \cite[Example 3]{r23} by our approach. This suggests  that our results can be extended to the polyhedral spectral function under Euclidean Jordan algebra.

We close this section by applying our major result in Theorem~\ref{prop:minquadbdcha} for important classes of spectral functions, enjoying the representation in \eqref{spec} with $\th$ being a polyhedral function. We begin by looking at the maximum eigenvalue function, which has important applications in optimal control. 
\begin{example}\label{exp:le}{\rm
 Consider the following largest eigenvalue optimization problem
\begin{equation}\label{opt-eq-eig1}
\begin{array}{cll}
\displaystyle\operatornamewithlimits{minimize}&\; \lambda_1(X)& \displaystyle\operatornamewithlimits{subject \; to }\;\;{{X}\in \S^n},
\end{array}
\end{equation}
where $\lambda_1(X)$ denotes the largest eigenvalue of a symmetric matrix $X$.
This corresponds to a special case of \eqref{spec}, where 
\[
\theta({x})=\max_{1\le i\le n}\left\{\langle {a}^i,{x}\rangle \right\},\quad {x}\in \R^n
\]
with ${a}^i$ being the unit vector whose $i$-th component is 1 and others are zero. It follows from Theorem~\ref{prop:minquadbdcha}, \eqref{eq:sigtsdp} and Lemma \ref{lemma:charofaffcc} that the minimal quadratic bundle of $\lambda_1(\cdot)$ takes the following form,
\begin{equation*}
    q(H)=2\sum_{i\in \mu}\sum_{j\in \omega}\frac{\lambda_i({Y})}{\lambda_i({X})-\lambda_j({X})}(P^{\top} HP)_{ij}^2+\delta_{{\rm aff}\,{\cal C}_{\lambda_1}(X,{Y})}(H), \quad H\in\S^n,
\end{equation*}
with
\[
H\in {\rm aff}\,({{\cal C}_{\lambda_1}({X},{Y})}) \;\Longleftrightarrow \;   P^{\top}_{\alpha^1} HP_{\alpha^1}=\begin{bmatrix}
	\rho\, I_{|\tau|} & 0 \\ 
	0 & P^\top_{\sigma} HP_{\sigma}
\end{bmatrix}\;\; \mbox{for some $\rho \in \R$},
\]
where $P\in{\cal O}^n(X)\cap{\cal O}^n(Y)$,  $\alpha^1=\left\{1\le i\le n\mid \lambda_i({X})={v}_1 \right\}$, $\tau:=\{i\in\alpha^1\mid \lambda_i({Y})>0\}$, $\sigma:=\{i\in\alpha^1\mid \lambda_i({Y})=0\}$, $\omega:= \bigcup_{l=2}^r\alpha^l$. Moreover, we know from Lemma \ref{suff_k} that $K(X,Y)\neq\varnothing$.} It is worth noting that the above explicit formulas of the minimal quadratic bundle (called previously the sigma term)   and  ${\rm aff}\,({{\cal C}_{\lambda_1}({X},{Y})})$ were also obtained in \cite[(67) and (66)]{cuiding} without appealing the theory of the quadratic bundle, used in our approach. 
\end{example}

\begin{example}\label{exp:sdp}{\rm
The $n$-dimensional positive semidefinite (SDP) constraint $X\in{\S}^n_+$ also corresponds to a special case of \eqref{spec}, where 
\[
\theta({x})
=\delta_{\R_+^n}({x})\quad {\rm and}\quad  {\rm dom}\,\theta=\{{x}\in\R^n\mid\max_{1\le i\le n}\left\{\langle {b}^i,{x}\rangle\leq0 \right\}, 
\]
with ${b}^i$ being the unit vector whose $i$-th component is $-1$ and others are zero. Thus,   the minimal quadratic bundle for SDP at $Y\in N_{\S_+^n}(X)$ has a representation of the form  
\begin{eqnarray}
q(H)&=&2\sum\limits_{1\leq l< l'\leq r}\sum\limits_{i\in\alpha^l}\sum\limits_{j\in\alpha^{l'}}\frac{\lambda_i({Y})-\lambda_j({Y})}{\lambda_i(X)-\lambda_j(X)}\big(({P}^{\top}HP)_{ij}\big)^2+\delta_{{\rm aff}\,{\cal C}_{g}(X,{Y})}(H)\nonumber\\
&=&2\sum_{i\in\alpha,j\in\gamma}\frac{-\lambda_j(Y)}{\lambda_i(X)}(P^{\top} HP)_{ij}^2+\delta_{{\rm aff}({\cal C}_{\S^n_+}(X,Y))}(H),\quad H\in\S^n,\label{eq:mqbsdp} 
\end{eqnarray}
where $P\in{\cal O}^n(X)\cap{\cal O}^n(Y)$, $\alpha=\{i\mid \lambda(X+Y)>0\}$, $\gamma=\{i\mid \lambda(X+Y)<0\}$, $\beta=\{i\mid \lambda(X+Y)=0\}$.  To justify the second equality above, observe that  if $i\in\alpha\cup\beta$, we have $\lambda_i(Y)=0$, which implies  $\frac{\lambda_i({Y})-\lambda_j({Y})}{\lambda_i(X)-\lambda_j(X)}=0$ for all $i\in\alpha, j\in\alpha\cup\beta$. Thus for all $i\in\alpha$ and $j\in\gamma$, we have $\frac{\lambda_i({Y})-\lambda_j({Y})}{\lambda_i(X)-\lambda_j(X)}=\frac{-\lambda_j(Y)}{\lambda_i(X)}$. Moreover,  the affine hull of critical cone of $\th$ at $X$ for $Y$ can be obtained from Lemma \ref{lemma:charofaffcc} as
\begin{equation*}\label{eq:affcsdp}
	{\rm aff}\,{\cal C}_{\S_+^n}({X},{Y})=\{H\in\S^n\mid P^{\top}_{\beta}HP_{\gamma}=0,\;P^{\top}_{\gamma}HP_{\gamma}=0\}.
\end{equation*}
 Indeed, it follows from Lemma \ref{lemma:characofcc}(a) that $P^{\top}_{\beta}HP_{\gamma}=0$ and for all $i,j\in\gamma$ such that   $\lambda_i(X+Y)\neq\lambda_j(X+Y)$, one has $P^{\top}_iHP_j=0$.  It can be checked from the explicit form of $b^i$ that for all $l\in\{1,\dots,r\}$ and $s\in\{1,\dots,u^l\}$, we have $s\in{\cal E}^l$, which implies $P^{\top}_{\gamma_s^l}HP_{\gamma_s^l}=\rho_s^lI$ for some $\rho_s^l\in\R$.  
Since $\eta_2(\lambda(X),\lambda(Y))=\gamma$ and $\iota_2(\lambda(X))=\gamma\cup\beta$, we know from \eqref{eq:affcha=02} that $\big(\diag(P^{\top}HP)\big)_{\gamma}=0$. Combining the aforementioned discussion together tells us that   $P^{\top}_{\gamma}HP_{\gamma}=0$. Moreover, we know from Lemma \ref{suff_k} that $K(X,Y)\neq\varnothing$.  
Note that the explicit forms of the corresponding affine critical cone and the minimal quadratic bundle were obtain  in \cite[equation (17)-(20) and Lemma 3.1]{Sun06} via a  different approach. We should add the latter was called the sigma term in \cite{Sun06}. 
}
\end{example}

\begin{example}\label{exp:combine}{\rm
The fastest mixing Markov chain (FMMC) problem was first proposed and studied in \cite{BDParrilo, BDSXiao}.  Suppose ${\cal G}=\{{\cal V},{\cal E}\}$ is a connected graph with the vertex set ${\cal V}=\{1,\dots,n\}$ and edge set ${\cal E}\subset{\cal V}\times{\cal V}$. 
It was shown in \cite[(1.1)]{BDXiao} that the FMMC problem can be formulated as   
 \begin{equation*}
	\begin{array}{cl}
		\displaystyle\operatornamewithlimits{minimize} & \max\{\lambda_2(X),-\lambda_n(X)\} \\ [3pt]
		{\rm subject \;to} & X\geq0,\; X{\bf 1}={\bf 1},\\
		& X_{ij}=0, (i,j)\in{\cal E}, X\in \S^n, 
	\end{array}
\end{equation*}
where ${\bf 1}\in\R^n$ denotes the vector whose components are all 1. 
It can be checked directly that its objective function can be written as the Ky Fan 2-norm $\|\cdot\|_{(2)}$ in $\S^n$.   Meanwhile, the semidefinite constraint is of great importance in machine learning and statistic applications such as clustering and community detection \cite{ZS,YO,DH}. 
If we further suppose $X\in\S^n_+$, by using the eigenvalue property of doubly stochastic matrices, namely square matrices with all nonnegative entries and each row and column summing to one,  yields $\lambda_1(X)=1$. 
Therefore, the FMMC with the SDP constraint can be written in the  form
\begin{equation}\label{eq:probFMMC}
	\begin{array}{cl}
		\displaystyle\operatornamewithlimits{minimize} & \lambda_1(X)+\lambda_2(X) +\dd_{\S^n_+}(X) \\ [3pt]
		{\rm subject\; to} & X\geq0,\; X{\bf 1}=1,\\
		& X_{ij}=0, (i,j)\in{\cal E}. 
        %& X\in \S^n.
	\end{array}
\end{equation}
The objective function of this problem, namely $g(\cdot):=\delta_{\S^n_+}(\cdot)+\lambda_1(\cdot)+\lambda_2(\cdot)$, has the spectral representation in \eqref{spec} with  $\theta({x})= \th_1(x)+\th_2(x)$ and 
$$
\th_1(x):=\max_{1\leq i<j\leq n}\{\langle {e}^{ij},{x}\rangle\}\quad \mbox{and}\quad \th_2(x):=\delta_{\R_+^n}(x)
$$ for any $x\in \R^n$, where the $i$-th and $j$-th components of $e^{ij}$ are 1 and  the others are 0.

Suppose $Y\in\partial\theta(\lambda(X))$. We obtain the explicit form of the minimal quadratic bundle of $g$ via Theorem \ref{prop:minquadbdcha}. In fact, suppose $\lambda(Y)=y_1+y_2$ with $y_1\in\theta_1(\lambda(X))$, $y_2\in\theta_2(\lambda(X))$. Note that  finding $y_1$ and $y_2$ requires solving an inequality system. As   mentioned in Remark \ref{remark:invariant-y1-y2}, the minimal quadratic bundle is also invariant under $y_1$ and $y_2$. Let $\alpha^l$, $l=1,\ldots,r$ be the index sets defined by \eqref{eq:def-alpha} and $P\in{\bf O}^n({X}) \cap {\bf O}^n({Y})$. For each $l \in\{1, \ldots, r\}$, recall the index sets $\{\gamma_s^l\}_{s=1}^{u^l}$ given by \eqref{eq:partonY}. 
It follows that 
\[
q(H)=\varUpsilon_{X,Y}(H)+\delta_{\mbox{aff}\,{\cal C}_g(X,Y)}(H),\quad H\in\S^n
\]
with
$$\varUpsilon_{X,Y}(H)=2\sum\limits_{1\leq p< p'\leq r}\sum\limits_{i\in\alpha^p}\sum\limits_{j\in\alpha^{p'}}\frac{\lambda_i({Y})-\lambda_j({Y})}{\lambda_i(X)-\lambda_j(X)}(P^{\top}HP)^2_{ij}.$$ 
Then we only need to characterize $\mbox{aff}\,{\cal C}_g(X,Y)$ and verify $K(X,Y)\neq\varnothing$. It is worth mentioning that the assumption $K(X,Y)\neq\varnothing$ can be justified  by using Lemma \ref{suff_k}. However, in this example, we verify it in a more direct way by finding a vector  $d$ in $K(X,Y)$. Consider the following two cases for all $X$ in the feasible set of \eqref{eq:probFMMC}. 

{\bf Case 1:} $|\alpha^1|\geq2$. We have 
$(y_1)_{i}\geq0$ if $i\in\alpha^1$ and $(y_1)_{i}=0$ otherwise; $(y_2)_{i}=0$ if $i\in\alpha^1$ and $(y_2)_{i}\leq0$ otherwise. It can be checked that $y_1$ and $y_2$ are arranged in a non-increasing order. 
Denote $\omega=\{i\mid (y_2)_i\neq0\}$ and $\omega'=\{i\notin\alpha^1\mid (y_2)_i=0\}$. 
We can pick $d_i={\bf 1}$ such that for all $(y_1)_i\neq0$, $d_i=0$ for all $(y_1)_i=0$, and $d_{\omega}=0$, $d_{\omega'}={\bf 1}$, which indicates $d\in K(X,Y)\neq\varnothing$. 
Denote
\begin{equation*}
	\tau:=\{i\in\alpha^1\mid (y_1)_i>0\}  \quad {\rm and}\quad \sigma:=\{i\in\alpha^1\mid (y_1)_i=0\}.
\end{equation*}
Then, we have $H\in {\rm aff}\,{\cal C}_{\theta\circ\lambda}({X},{Y})$ if and only if the following three conditions are satisfied:
\begin{itemize}
    \item[(i)] $P^{\top}_{\alpha^1} HP_{\alpha^1}=\begin{bmatrix}
	\rho\, I_{|\tau|} & 0 \\ 
	0 & P^\top_{\sigma} HP_{\sigma}
\end{bmatrix}$ for some $\rho \in \R$;
    \item[(ii)] for each $l \in\{2, \ldots, r\}$, $P_{\alpha^l}^{\top} H P_{\alpha^l}=\operatorname{Diag}\big((P^{\top} H P)_{\gamma_1^l \gamma_1^l}, \ldots,(P^{\top} H P)_{\gamma_{u^l}^l \gamma_{u^l}^l}\big)$;
    \item[(iii)] $P^{\top}_{\omega} HP_{\omega}=0$ with $\omega=\{i\mid (y_2)_i\neq0\}$. 
\end{itemize}

{\bf Case 2:} $|\alpha^1|=1$. 
It follows that 
\begin{equation*}
\left\{\begin{array}{ll}
(y_1)_1=1, & \\[3pt]
	(y_1)_i\in[0,1] &\forall\,i\in \alpha^2\quad {\rm and}\quad   \displaystyle\sum_{i\in\alpha^2}(y_1)_i=1,   \\ [3pt]
	(y_1)_i=0 &\forall\,i\in \alpha^l,\quad  l=3,\ldots,r.
\end{array} \right.
\end{equation*}
Denote $\tau:=\{j\in\alpha^2\mid (y_1)_j>0\}$, $\sigma:=\{j\in\alpha^2\mid (y_1)_j=0\}$,
$\zeta=\{i\mid\lambda_i(X)=0,\ (y_2)_i<0\}$ and $\zeta':=\{i\mid\lambda_i(X)=0,\ (y_2)_i=0\}$.
If $\lambda_2(X)>0$, we have 
$(y_1)_{i}\geq0$ when $i\in\alpha^1\cup\alpha^2$ and $(y_1)_{i}=0$ otherwise; $(y_2)_{i}=0$ when $i\in\alpha^1\cup\alpha^2$ and $(y_2)_{i}\leq0$ otherwise. It can be checked that $y_1$ and $y_2$ are arranged in a non-increasing order. 
We can pick $d$ such that $d_{\mu}=0$, $d_{\nu}=-{\bf 1}$ and $d_{\zeta}=0$, $d_{\zeta'}={\bf 1}$. Therefore, 
 $d\in K(X,Y)$, which yields $K(X,Y)\neq\varnothing$. If $\lambda_2(X)=0$, denote $\alpha=\{i\mid\lambda_i(Y)>0,i\neq1\}$, $\beta=\{i\mid\lambda_i(Y)=0\}$ and $\gamma=\{i\mid\lambda_i(Y)<0\}$. In particular, we can pick 
 $$y_1=\big(\displaystyle\sum_{i\in\alpha}\lambda_i(Y),\lambda_{\alpha}(Y),0,\dots,0\big)^\top +\sum_{i=1}^n\frac{1-\sum_{i\in\alpha}\lambda_i(Y)}{n}e^{1i}$$ 
 and $y_2=\lambda(Y)-y_1$, which implies for all $(\nu,\mu)\in\iota_1(\lambda(X))\times \iota_2(\lambda(X))$ that $(\nu,\mu)\in\eta(\lm(X),\lm(Y))$. 
Then, we have $H\in {\rm aff}\,{\cal C}_{\theta\circ\lambda}({X},{Y})$ if and only if the following three conditions are satisfied:
\begin{itemize}
    \item[(i)]$P^{\top}_{\alpha^2} HP_{\alpha^2}=\begin{bmatrix}
	\rho\, I_{|\tau|} & 0 \\ 
	0 & P^\top_{\sigma} HP_{\sigma}
\end{bmatrix}$ for some $\rho \in \R$;

    \item[(ii)] for each $l \in\{3, \ldots, r\}$, $P_{\alpha^l}^{\top} H P_{\alpha^l}=\operatorname{Diag}\big((P^{\top} H P)_{\gamma_1^l \gamma_1^l}, \ldots,(P^{\top} H P)_{\gamma_{u^l}^l \gamma_{u^l}^l}\big)$; 
    \item[(iii)] $P^{\top}_{\omega} HP_{\omega}=0$ with $\omega=\{i\mid (y_2)_i\neq0\}$. 
\end{itemize}
}
\end{example}

\section{Tilt-Stability in Matrix Optimization Problems}\label{tilt-matrix}

In this section, we aim  to study tilt-stable local minimizers of the composite optimization problem 
\begin{equation}\label{comp2}
\mbox{minimize}\;\;f(X):=\ph(X)+g(X)\quad \mbox{subject to}\;\; X\in \S^n.
\end{equation}
where $\ph:\S^n\to \R$ is ${\cal C}^2$-smooth, and $g:\S^n\to \oR$ is a convex  spectral function
with representation \eqref{spec}.  Note that the function $f$ from  \eqref{comp2} is  prox-regular and subdifferentially continuous due to  \cite[Proposition 2.2]{LPR2000}. 
Below, we will provide a characterization of a tilt-stable local minimizer of this problem.
by applying our characterization of such minimizers from Corollary~\ref{tilt_minimal}.

\begin{Theorem}\label{thm:tiltequiv} Assume that $\overline{X}\in \S^n$ with $0\in \sub f(\overline{X})$, where $f$ is taken from \eqref{comp2}. Suppose that $K(\overline{X},\overline{Y})\neq\varnothing$ with $\overline{Y}=-\nabla\varphi(\overline{X})\in \S^n$.  Then the following properties are equivalent. 

\begin{itemize}[noitemsep]
\item [\rm{(a)}] $\overline{X}$ is a tilt-stable local minimizer of $f$.
\item [\rm{(b)}] The strong second-order sufficient condition (SSOSC)  
$$
\langle \nabla^2\ph(\overline{X})H,H\rangle+\varUpsilon_{\overline{X},-\nabla\varphi(\overline{X})}(H)>0 \quad\mbox{for all}\;\;  H\in{\rm aff}\,{\cal C}_g(\overline{X},-\nabla\varphi(\overline{X}))\setminus \{0\}
$$
holds. 
\end{itemize}
Moreover, both conditions above yield the kernel condition 
\begin{equation}\label{ker_cond}
\ker \nabla^2\ph(\overline{X})\cap \ker q=\{0\},
\end{equation}
where  $\ker q=\big\{H\in {\rm aff}\,{\cal C}_g(\overline{X},-\nabla\varphi(\overline{X}))\big|\; \varUpsilon_{\overline{X},-\nabla\varphi(\overline{X})}(H)=0\big\}.$ If, in addition,  $\ph$ is convex, then \eqref{ker_cond} and the conditions in {\rm(}a{\rm)} and {\rm(}b{\rm)} are equivalent. Furthermore, $\ker q$ can be equivalently described as 
$$\left\{H\in\S^n\Bigg | \begin{array}{l}
(P^{\top}HP)_{ij}=0, \forall i,j\;\mbox{such that}\;\lambda_i(\overline{X})\neq\lambda_j(\overline{X}), \lambda_i(-\nabla\ph(\overline{X}))\neq\lambda_j(-\nabla\ph(\overline{X})),\\
(P^{\top}HP)_{\alpha^l\alpha^l}={\rm diag}((P^{\top}HP)_{\gamma^l_1\gamma^l_1},\dots,(P^{\top}HP)_{\gamma^l_{u^l}\gamma^l_{u^l}}),\\
\begin{array}{l}
\langle \diag(P^{\top}HP), a^\nu\rangle=\langle \diag(P^{\top}HP), a^{\nu'}\rangle,\\ %\quad\forall i,j\in\eta_1(\lambda(\overline{X}),\lambda(Y_1)),\\
\langle \diag(P^{\top}HP), b^\mu\rangle=0,%\quad\forall i\in\eta_2(\lambda(\overline{X}),\lambda(Y_2)),
\end{array}
\ (\nu,\mu), (\nu',\mu')\in\eta(\lambda(X),\lambda(Y)), \\
\mbox{for each}\; l\in\{1,...,r\}\;\mbox{and}\; k\in{\cal E}^l, \exists\,\rho_k^l\in\R\;\mbox{such that}\;(P^{\top}HP)_{\gamma_k^l\gamma_k^l}=\rho_k^lI_{|\gamma_k^l|}
\end{array}\right\},$$
where $P\in{\bf O}^n(\overline{X})\cap{\bf O}^n(-\nabla\ph(\overline{X}))$,  ${\cal E}^l$ and $\gamma_k^l$ are given in \eqref{eq:Eldiff} and \eqref{eq:partonY}. 
\end{Theorem}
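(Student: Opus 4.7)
The proof proceeds by reducing the tilt-stability question for $f = \varphi + g$ to the minimal-quadratic-bundle characterization for $g$ established in Section~\ref{sec:qbdofspfunc}, and then reading off (b) from the explicit formula in Theorem~\ref{prop:minquadbdcha}.

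First, I would set the stage. Since $\varphi$ is $\mathcal{C}^2$ and $g$ is convex (so prox-regular and subdifferentially continuous by \cite[Example~13.30]{rw}), $f$ itself is prox-regular and subdifferentially continuous at $\overline{X}$ for $0$. From $0\in\partial f(\overline{X})=\nabla\varphi(\overline{X})+\partial g(\overline{X})$, we have $\overline{Y}=-\nabla\varphi(\overline{X})\in\partial g(\overline{X})$, and the assumption $K(\overline{X},\overline{Y})\neq\varnothing$ puts us squarely in the setting of Theorem~\ref{prop:minquadbdcha}.

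The central step is a sum rule for the quadratic bundle of $f$. Because $\nabla\varphi$ is $\mathcal{C}^1$, the map $(X,V)\mapsto(X,V-\nabla\varphi(X))$ is a diffeomorphism carrying $\gph\partial f$ to $\gph\partial g$ locally, and the standard sum rule for second subderivatives with a $\mathcal{C}^2$ summand gives
\[
\d^2 f(X,V)(H)=\langle\nabla^2\varphi(X)H,H\rangle+\d^2 g(X,V-\nabla\varphi(X))(H)
\]
for every $(X,V)\in\gph\partial f$ near $(\overline{X},0)$. Since $\nabla^2\varphi$ is continuous, the continuous–convergent plus epi-convergent sum rule (\cite[Theorem 7.46(b)]{rw}) yields
\[
\quadr f(\overline{X},0)=\bigl\{\langle\nabla^2\varphi(\overline{X})\cdot,\cdot\rangle+q \ \bigm|\ q\in\quadr g(\overline{X},\overline{Y})\bigr\}.
\]
Combined with Theorem~\ref{prop:minquadbdcha}, this shows $f$ enjoys the minimal quadratic bundle property at $\overline{X}$ for $0$, with minimal element
\[
\bar q(H)=\langle\nabla^2\varphi(\overline{X})H,H\rangle+\varUpsilon_{\overline{X},\overline{Y}}(H)+\delta_{\mbox{\rm aff}\,\mathcal{C}_g(\overline{X},\overline{Y})}(H).
\]
Corollary~\ref{tilt_minimal} then gives the equivalence (a)$\iff$(b) instantly: tilt-stability is exactly $\bar q(H)>0$ for every $H\neq 0$, which, because the indicator forces $H\in\mbox{aff}\,\mathcal{C}_g(\overline{X},\overline{Y})$, is (b).

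For the kernel condition, note that if $H\in\ker\nabla^2\varphi(\overline{X})\cap\ker q$ then $\langle\nabla^2\varphi(\overline{X})H,H\rangle=0$ and $\varUpsilon_{\overline{X},\overline{Y}}(H)=0$ with $H\in\mbox{aff}\,\mathcal{C}_g(\overline{X},\overline{Y})$; so $\bar q(H)=0$, and (b) forces $H=0$. Conversely, assuming $\varphi$ convex, one has $\nabla^2\varphi(\overline{X})\succeq 0$, and each $q\in\quadr g(\overline{X},\overline{Y})$ is a convex GQF satisfying $q(0)=0$ (being a proper convex GQF obtained as an epi-limit of nonnegative second subderivatives of the convex function $g$), so $\varUpsilon_{\overline{X},\overline{Y}}+\delta_{\mbox{\rm aff}\,\mathcal{C}_g(\overline{X},\overline{Y})}\ge 0$. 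Hence both summands in $\bar q$ are nonnegative on $\mbox{aff}\,\mathcal{C}_g(\overline{X},\overline{Y})$, and $\bar q(H)=0$ with $H\in\mbox{aff}\,\mathcal{C}_g(\overline{X},\overline{Y})$ is equivalent to $H\in\ker\nabla^2\varphi(\overline{X})\cap\ker q$; the kernel condition thus yields (b).

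Finally, the explicit description of $\ker q$ comes from combining Lemma~\ref{lemma:charofaffcc} (to unpack $\mbox{aff}\,\mathcal{C}_g(\overline{X},\overline{Y})$) with the observation that in the formula \eqref{eq:sigtsdp} for $\varUpsilon_{\overline{X},\overline{Y}}$, every coefficient $(\lambda_i(\overline{Y})-\lambda_j(\overline{Y}))/(\lambda_i(\overline{X})-\lambda_j(\overline{X}))$ is nonnegative: $\lambda_i(\overline{X})>\lambda_j(\overline{X})$ when $i\in\alpha^l$, $j\in\alpha^{l'}$ with $l<l'$, and the monotonicity of subgradients of a symmetric convex function $\theta$ forces $\lambda_i(\overline{Y})\ge\lambda_j(\overline{Y})$. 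Consequently $\varUpsilon_{\overline{X},\overline{Y}}(H)=0$ is equivalent to $(P^{\top}HP)_{ij}=0$ for every pair with $\lambda_i(\overline{X})\neq\lambda_j(\overline{X})$ and $\lambda_i(\overline{Y})\neq\lambda_j(\overline{Y})$, which together with the aff-conditions from Lemma~\ref{lemma:charofaffcc} yields the displayed description.

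The main obstacle I anticipate is the sum-rule step: one must justify carefully that the epi-convergence of $\d^2 g(X^k,V^k-\nabla\varphi(X^k))$ transfers to $\d^2 f(X^k,V^k)$ under the continuous-convergent perturbation by the quadratic form $\langle\nabla^2\varphi(\cdot)\cdot,\cdot\rangle$, and that the constructed limit is indeed a GQF (inherited from the minimal quadratic bundle of $g$), so that the minimal-quadratic-bundle property transfers from $g$ to $f$ unambiguously.
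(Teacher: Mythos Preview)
Your proposal is correct and follows essentially the same route as the paper: the sum rule $\quadr f(\overline X,0)=\{\langle\nabla^2\varphi(\overline X)\cdot,\cdot\rangle+q\mid q\in\quadr g(\overline X,\overline Y)\}$ via \cite[Theorem~7.46]{rw}, then Corollary~\ref{tilt_minimal} combined with Theorem~\ref{prop:minquadbdcha} for (a)$\iff$(b), the nonnegativity of the minimal $q$ (from convexity of $g$) for the kernel implication and its converse under convexity of $\varphi$, and finally Lemma~\ref{lemma:charofaffcc} together with nonnegativity of each coefficient in \eqref{eq:sigtsdp} for the explicit description of $\ker q$. Your anticipated obstacle is exactly what the paper handles by the one-line citation of \cite[Theorem~7.46]{rw}, and the GQF property of the limit is immediate since a finite quadratic form plus a GQF is again a GQF.
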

\begin{proof}
Employing the sum rule for the epi-convergence from \cite[Theorem~7.46]{rw}, one can conclude directly that 
\begin{equation}\label{quadc}
\quadr f(\overline{X},0)= \big\{h+ q|\; q\in \quadr g(\overline{X},-\nabla\varphi(\overline{X}))\big\}. 
\end{equation}
The equivalence of (a) and (b) immediately results from Corollary~\ref{tilt_minimal}, Theorems~\ref{prop:minquadbdcha} and \eqref{quadc}. We proceed by showing that (b) yields  (c). Set $q:=\varUpsilon_{\overline{X},-\nabla\ph(\overline{X})}+\dd_{{\rm aff}\,{\cal C}_g(\overline{X},-\nabla\ph(\overline{X}))}$
and observe that the SSOSC in (b) amounts to 
\begin{equation}\label{ssosc3}
 \langle \nabla^2\ph(\overline{X})H,H\rangle+ q(H)>0\quad \mbox{for all}\;\; H\in \S^n\setminus\{0\}.   
\end{equation}
Moreover, we conclude from Proposition~\ref{quad_minimal} that $q\in \quadr g(\overline{X},-\nabla\ph(\overline{X}))$. By definition, we find $(X^k,Y^k)\in \gph \sub g$ such that $\d^2g(X^k,Y^k)\xrightarrow{e} q$ and $\d^2g(X^k,Y^k)$ is a GQF for each $k$. It follows from convexity of $g$ that $\d^2g(X^k,Y^k)(H)\ge 0$ for all $H\in \S^n$. Moreover, $\d^2g(X^k,Y^k)$ is a convex function for each $k$. Combining these tells us that  $q$ is a convex function on $\S^n$ and $q(H)\ge 0$ for all $H\in \S^n$.  This demonstrates that \eqref{ssosc3} implies the kernel condition in \eqref{ker_cond}. Assume now that $\ph$ is convex. We are going to argue that (c) implies (b). To this end, it is easy to deduce from the convexity of $\ph$ that the inequality in \eqref{ssosc3} is automatically satisfied when the strict inequality `$>$' therein is replaced with `$\ge$'. Suppose to the contrary that there is 
$H\in \S^n\setminus\{0\}$ for which we have $ \langle \nabla^2\ph(\overline{X})H,H\rangle+ q(H)=0$. Since $q(H)\ge 0$
and $\nabla^2\ph(\overline{X})H,H\rangle\ge 0$, we arrive at
$H\in \ker \nabla^2\ph(\overline{X})\cap \ker q$, a contradiction. 
This confirms that in the presence of the convexity of $\ph$
the conditions in (b) and (c) are equivalent. 

It remains to justify the equivalent description of $\ker q$. To this end, we know from \eqref{eq:sigtsdp} that 
\begin{eqnarray*}
\ker q&=&\big\{H\in \mbox{aff}\,{\cal C}_g(\overline{X},-\nabla\ph(\overline{X}))\big|\; \varUpsilon_{\overline{X},-\nabla\ph(\overline{X})}(H)=0\big\}\\
&=&\big\{H\in \mbox{aff}\,{\cal C}_g(\overline{X},-\nabla\ph(\overline{X}))\big|\; \sum\limits_{1\leq p< p'\leq r}\sum\limits_{i\in\alpha^p}\sum\limits_{j\in\alpha^{p'}}\frac{\lambda_i(-\nabla\ph(\overline{X}))-\lambda_j(-\nabla\ph(\overline{X}))}{\lambda_i(\overline{X})-\lambda_j(\overline{X})}(P^{\top}HP)^2_{ij}=0\big\}.
\end{eqnarray*}
Since for any $i,j$, the inequality 
$$\sum\limits_{1\leq p< p'\leq r}\sum\limits_{i\in\alpha^p}\sum\limits_{j\in\alpha^{p'}}\frac{\lambda_i(-\nabla\ph(\overline{X}))-\lambda_j(-\nabla\ph(\overline{X}))}{\lambda_i(\overline{X})-\lambda_j(\overline{X})}(P^{\top}HP)^2_{ij}\geq0
$$
holds, 
we must have $(P^{\top}HP)_{ij}=0$ for any  $ i,j$ such that $\lambda_i(\overline{X})\neq\lambda_j(\overline{X})$ and $\lambda_i(-\nabla\ph(\overline{X}))\neq\lambda_j(-\nabla\ph(\overline{X}))$. 
This ensures the claimed representation of $\ker q$ and hence completes the proof.
\end{proof}

\begin{Remark}{\rm
The set $\ker q$, defined in Theorem~\ref{thm:tiltequiv}, 
can have a more explicit form for certain concrete examples by applying the explicit form of the minimal quadratic bundle. 
For the SDP framework from Example \ref{exp:sdp}, $\ker q$ can be  simplified as $\ker q=\big\{H\in \S^n\big|\; (P^{\top}HP)_{\gamma}=0\big\}$ by directly applying \eqref{eq:mqbsdp}. For the largest eigenvalue optimization problem from Example \ref{exp:le}, it is not hard to see that  
$$\ker q=\big\{H\in\S^n\mid (P^{\top}HP)_{\tau\omega}=0, P^{\top}_{\alpha^1} HP_{\alpha^1}=\begin{bmatrix}
	\rho\, I_{|\tau|} & 0 \\ 
	0 & P^\top_{\sigma} HP_{\sigma}
\end{bmatrix}\;\; \mbox{for some $\rho \in \R$}\big\}.$$
}
\end{Remark}

We close this section by comparing  Theorem \ref{thm:tiltequiv} with the recent results established in \cite{LPS24, OA}.
A characterization of tilt-stable local minimizers was achieved  in \cite[Theorem~4.1]{OA}  under three major assumptions (B1)-(B3). Assumption (B1) therein requires in the setting of Theorem~\ref{thm:tiltequiv} that $f$ be ${\cal C}^2$-decomposable in the sense of \cite[Definition~3.1]{OA}, which can be ensured when the spectral function $g$ in \eqref{comp2} enjoys the same property. We do not know whether or not our spectral function $g$ is ${\cal C}^2$-decomposable. Establishing such a result does not seem to be an easy task and does require a rather nontrivial approach such as the one used in \cite[Proposition~3.2]{cdz17}, where it was shown that the spectral function $g$ in \eqref{spec} is ${\cal C}^2$-cone reducible, which means that the epigraphical set $\mbox{epi}\, g$ is a ${\cal C}^2$-cone reducible provided that $\th$ enjoys the same property. Applying \cite[Proposition~3.2]{cdz17} tells us that $\delta_{\ss\epi\, g}$ is ${\cal C}^2$-decomposable. Whether or not this can be used to show that $g$ in \eqref{comp2} enjoys the same property is unclear to us at this time. The authors in \cite{LPS24} studied tilt-stable local minimizers of the convex Ky-Fan $k$-norm regularized problem using the concept of second subderivative and without appealing to the concept of the quadratic bundle. The equivalence of (a) and \eqref{ker_cond} in Theorem \ref{thm:tiltequiv} resembles a similar characterization in \cite[Theorem~4.1]{LPS24}. Finally, we should add that \cite[Theorem 7.9]{go} also provides a characterization of tilt stability using the concept of the subspace containing derivative. While the latter concept is related to the quadratic bundles, as shown in Proposition~\ref{quadn}, the advantage of our approach is that we do not have to calculate all quadratic bundles in our setting and the minimal one can be used for our goal.

\section{Conclusion}

In this paper, we establish a novel characterization of tilt stability through the framework of quadratic bundles, creating a significant theoretical bridge in optimization theory. Our analysis derives the explicit form of the minimal quadratic bundle for spectral functions and demonstrates its equivalence to the strong second-order sufficient condition, thereby advancing our understanding of matrix optimization problems. This theoretical unification strengthens the foundation for analyzing optimization problems through multiplier methods while deepening our insights into perturbation properties. Looking forward, several challenging questions emerge: extending these results to general spectral functions and studying the relationship between SSOSC and tilt stability without the nondegeneracy condition remain important problems for future research.

\end{document}